\definecolor{labelkey}{HTML}{0455BF}
\definecolor{refkey}{rgb}{0,0.6,0.0}
\definecolor{dblue}{HTML}{044EAF}
\definecolor{dgreen}{HTML}{02724A}
\definecolor{myellow}{HTML}{D97904}
\definecolor{dred}{HTML}{D90404}
\renewcommand\familydefault{\rmdefault}
\DeclareMathAlphabet{\mathrm}{OT1}{\familydefault}{m}{n}
\renewcommand\operator@font{\rm}
\newcommand{\scal}[2]{{\langle{{#1}\mid{#2}}\rangle}}
\newcommand{\pair}[2]{\langle{{#1},{#2}}\rangle} 
\newcommand{\Pair}[2]{\big\langle{{#1},{#2}}\big\rangle}
\newcommand{\menge}[2]{\big\{{#1}~|~{#2}\big\}} 
\newcommand{\Menge}[2]{\left\{{#1}~\middle|~{#2}\right\}} 
\newcommand{\EE}{\ensuremath{\mathsf{E}}}
\newcommand{\PP}{\ensuremath{\mathsf{P}}}
\newcommand{\XX}{\ensuremath{\mathcal X}}
\newcommand{\TXX}{\ensuremath{\widetilde{\mathcal X}}}
\newcommand{\YY}{\ensuremath{\mathcal Y}}
\newcommand{\TYY}{\ensuremath{\widetilde{\mathcal Y}}}
\newcommand{\RR}{\ensuremath{\mathbb R}}
\newcommand{\NN}{\ensuremath{\mathbb N}}
\newcommand{\XS}{\ensuremath{\mathsf X}}
\newcommand{\YS}{\ensuremath{\mathsf Y}}
\newcommand{\ZS}{\ensuremath{\mathsf Z}}
\newcommand{\BE}{\ensuremath{\EuScript B}}
\newcommand{\FF}{\ensuremath{\EuScript F}}
\renewcommand{\leq}{\ensuremath{\leqslant}}
\renewcommand{\geq}{\ensuremath{\geqslant}}
\newcommand{\pinf}{\ensuremath{{{+}\infty}}}
\newcommand{\minf}{\ensuremath{{{-}\infty}}}
\newcommand{\RX}{\ensuremath{\left]{-}\infty,{+}\infty\right]}}
\newcommand{\RXX}{\overline{\mathbb{R}}}
\newcommand{\RPP}{\ensuremath{\left]0,{+}\infty\right[}}
\newcommand{\emp}{\ensuremath{\varnothing}}
\newcommand{\Id}{\ensuremath{\mathrm{Id}}}
\newcommand{\exi}{\ensuremath{\exists\,}}
\newcommand{\mae}{\ensuremath{\text{\rm$\mu$-a.e.}}}
\newcommand{\moyo}[2]{\prescript{#2}{}{#1}}
\DeclareMathOperator*{\essinf}{ess\,inf}
\DeclareMathOperator{\intdom}{int\,dom}
\DeclareMathOperator{\epi}{epi}
\DeclareMathOperator{\dom}{dom}
\DeclareMathOperator{\inte}{int}
\DeclareMathOperator{\rec}{rec}
\DeclareMathOperator{\prox}{prox}
\newtheorem{theorem}{Theorem}[section]
\newtheorem{lemma}[theorem]{Lemma}
\newtheorem{proposition}[theorem]{Proposition}
\theoremstyle{plain}{\theorembodyfont{\rmfamily}%
\newtheorem{assumption}[theorem]{Assumption}}
\theoremstyle{plain}{\theorembodyfont{\rmfamily}%
\newtheorem{example}[theorem]{Example}}
\theoremstyle{plain}{\theorembodyfont{\rmfamily}%
\newtheorem{remark}[theorem]{Remark}}
\theoremstyle{plain}{\theorembodyfont{\rmfamily}%
}
\theoremstyle{plain}{\theorembodyfont{\rmfamily}%
}
\theoremstyle{plain}{\theorembodyfont{\rmfamily}%
}
\theoremstyle{plain}{\theorembodyfont{\rmfamily}%
\newtheorem{definition}[theorem]{Definition}}
\theoremstyle{plain}{\theorembodyfont{\rmfamily}%
}
\theoremstyle{plain}{\theorembodyfont{\rmfamily}%
\newtheorem{notation}[theorem]{Notation}}
\numberwithin{equation}{section}
\setlist[enumerate]{itemsep=-1pt,topsep=1pt}
\setlist[description]{itemsep=-1pt,topsep=1pt}
\setlist[itemize]{itemsep=-1pt,topsep=1pt}
\newcommand*\Cdot{{\mkern 2mu\cdot\mkern 2mu}}
\newcommand{\email}[1]{\href{mailto:#1}{\nolinkurl{#1}}}
\begin{document}

\title{\sffamily\huge%
Interchange Rules for Integral Functions\thanks{%
Contact author: P. L. Combettes.
Email: \email{plc@math.ncsu.edu}.
Phone: +1 919 515 2671.
The work of M. N. B\`ui was supported by NAWI Graz and
the work of P. L. Combettes was supported by the National
Science Foundation under grant DMS-1818946.
}}

\author[1]{Minh N. B\`ui}
\affil[1]{Universit\"at Graz
\authorcr
Institut f\"ur Mathematik und Wissenschaftliches Rechnen
\authorcr
8010 Graz, Austria
\authorcr
\email{minh.bui@uni-graz.at}\medskip
}
\author[2]{Patrick L. Combettes}
\affil[2]{North Carolina State University
\authorcr
Department of Mathematics
\authorcr
Raleigh, NC 27695-8205, USA
\authorcr
\email{plc@math.ncsu.edu}
}

\date{~}

\maketitle

\begin{abstract} 
We first present an abstract principle for the interchange of
infimization and integration over spaces of mappings taking values
in topological spaces. New conditions on the underlying space and
the integrand are then introduced to convert this principle into
concrete scenarios that are shown to capture those of various
existing interchange rules. These results are leveraged to
improve state-of-the-art interchange rules for evaluating Legendre
conjugates, subdifferentials, recessions, Moreau envelopes, and
proximity operators of integral functions by bringing the
corresponding operations under the integral sign. 
\end{abstract}

\begin{keywords}
Calculus of variations,
convex analysis,
integral function.
\end{keywords}

\begin{MSC}
46G12, 49J52, 46N10
\end{MSC}

\newpage

\section{Introduction}
\label{sec:1}

This paper concerns the interchange of the infimization and
integration operations in the context of the following assumption.

\begin{assumption}
\label{a:1}
\
\begin{enumerate}[label={\rm[\Alph*]}]
\item
\label{a:1a}
$\XS$ is a real vector space endowed with a
Souslin topology $\EuScript{T}_\XS$ and
associated Borel $\sigma$-algebra $\BE_\XS$.
\item
\label{a:1b}
The mapping
$(\XS\times\XS,\BE_\XS\otimes\BE_\XS)\to(\XS,\BE_\XS)\colon
(\mathsf{x},\mathsf{y})\mapsto\mathsf{x}+\mathsf{y}$
is measurable.
\item
\label{a:1c}
For every $\lambda\in\RR$, the mapping
$(\XS,\BE_\XS)\to(\XS,\BE_\XS)\colon
\mathsf{x}\mapsto\lambda\mathsf{x}$ is measurable.
\item
\label{a:1d}
$(\Omega,\FF,\mu)$ is a $\sigma$-finite measure space such that
$\mu(\Omega)\neq 0$, and $\mathcal{L}(\Omega;\XS)$
denotes the vector space of measurable mappings from
$(\Omega,\FF)$ to $(\XS,\BE_\XS)$.
\item
\label{a:1e}
$\XX$ is a vector subspace of $\mathcal{L}(\Omega;\XS)$.
\item
\label{a:1f}
$\varphi\colon(\Omega\times\XS,\FF\otimes\BE_\XS)\to\RXX$ is an
integrand in the sense that it is measurable and, for every
$\omega\in\Omega$, $\epi\varphi_\omega\neq\emp$, where
$\varphi_\omega=\varphi(\omega,\Cdot)$.
\item
\label{a:1g}
There exists $\overline{x}\in\XX$ such that
$\int_\Omega\max\{\varphi(\Cdot,\overline{x}(\Cdot)),0\}
d\mu<\pinf$.
\end{enumerate}
As is customary, given a measurable function
$\varrho\colon(\Omega,\FF)\to\RXX$, $\int_\Omega\varrho d\mu$ is
the usual Lebesgue integral, except when the Lebesgue integral
$\int_\Omega\max\{\varrho,0\}d\mu$ is $\pinf$, in which case
$\int_\Omega\varrho d\mu=\pinf$.
\end{assumption}

Many problems in analysis and its applications require the
evaluation of the infimum over $\XX$ of the function
$f\colon x\mapsto\int_\Omega\varphi(\Cdot,x(\Cdot))d\mu$. 
A simpler task is to evaluate the function 
$\phi\colon\omega\mapsto\inf\varphi(\omega,\XS)$ and then compute 
$\int_\Omega\phi d\mu$. In general, this provides only a
lower bound as $\inf f(\XX)\geq\int_\Omega\phi d\mu$. Conditions
under which the two quantities are equal have been established in
\cite{Hiai77}, \cite{Perk18}, and \cite{Roc76k} under various
hypotheses on $\XS$, $(\Omega,\FF,\mu)$, $\XX$, and $\varphi$.
The resulting infimization-integration interchange rule
is a central tool in areas such as 
plasticity theory \cite{Bouc88},
convex analysis \cite{Corr19},
multivariate analysis \cite{Hiai77},
calculus of variations \cite{Ioff74},
economics \cite{Levi85},
stochastic processes \cite{Penn18},
optimal transport \cite{Penn19},
stochastic optimization \cite{Penn23},
finance \cite{Perk18},
variational analysis \cite{Rock09},
and stochastic programming \cite{Shap21}.
Note that, in Assumption~\ref{a:1}\ref{a:1a}--\ref{a:1c}, we do
not require that $(\XS,\EuScript{T}_\XS)$ be a topological vector
space to accommodate certain applications. For instance, in
\cite{Perk18}, $\XS$ is the space of c\`adl\`ag functions on
$[0,1]$ and $\EuScript{T}_\XS$ is the Skorokhod topology. In this
context, $(\XS,\EuScript{T}_\XS)$ is a Polish space
\cite[Chapter~3]{Bill68} which is not a topological vector space
\cite{Pest95} but which satisfies
Assumption~\ref{a:1}\ref{a:1a}--\ref{a:1c}.

Our first contribution is Theorem~\ref{t:1} below, which provides,
under the umbrella of Assumption~\ref{a:1}, a broad setting for
the interchange of infimization and integration.

\begin{theorem}[interchange principle]
\label{t:1}
Suppose that Assumption~\ref{a:1} and the following hold:
\begin{enumerate}
\item
\label{t:1i}
$\inf_{\mathsf{x}\in\XS}\varphi(\Cdot,\mathsf{x})$ is
$\FF$-measurable.
\item
\label{t:1ii}
There exists a sequence $(x_n)_{n\in\NN}$ in
$\mathcal{L}(\Omega;\XS)$ such that the following are
satisfied:
\begin{enumerate}
\item
\label{t:1iia}
$\inf_{\mathsf{x}\in\XS}\varphi(\Cdot,\mathsf{x})=
\inf_{n\in\NN}\varphi(\Cdot,x_n(\Cdot)+\overline{x}(\Cdot))$
$\mae$
\item
\label{t:1iib}
There exists an increasing sequence $(\Omega_k)_{k\in\NN}$
of finite $\mu$-measure sets in $\FF$ such that
$\bigcup_{k\in\NN}\Omega_k=\Omega$ and
\begin{equation}
\label{e:99}
(\forall n\in\NN)(\forall k\in\NN)\quad
\menge{1_Ax_n}{\FF\ni A\subset\Omega_k\,\,\text{and}\,\,
\overline{x_n(A)}\,\,\text{is compact}}\subset\XX.
\end{equation}
\end{enumerate}
\end{enumerate}
Then
\begin{equation}
\label{e:1}
\inf_{x\in\XX}\int_\Omega\varphi\big(\omega,x(\omega)\big)
\mu(d\omega)=
\int_\Omega\inf_{\mathsf{x}\in\XS}\varphi(\omega,\mathsf{x})\,
\mu(d\omega).
\end{equation}
\end{theorem}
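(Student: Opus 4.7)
The inequality $\geq$ in \eqref{e:1} is immediate from the pointwise bound $\varphi(\omega, x(\omega)) \geq \phi(\omega) := \inf\varphi(\omega, \XS)$ for every $x \in \XX$, which integration preserves; note that $\phi$ is $\FF$-measurable by hypothesis \ref{t:1i}, so the right-hand side of \eqref{e:1} is well defined. The substantive content is the reverse inequality, which I plan to obtain by producing, for each $\varepsilon > 0$, a mapping $y \in \XX$ with $\int_\Omega \varphi(\cdot, y) d\mu \leq \int_\Omega \phi d\mu + \varepsilon$ (and, if $\int_\Omega \phi d\mu = \minf$, a sequence of $y$'s whose integrals tend to $\minf$).

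The starting point is the countable reduction in \ref{t:1ii}\ref{t:1iia}. After prepending $x_0 = 0$ (harmless, since the equality is preserved), the truncated minima $\phi_N := \min_{0 \leq n \leq N}\varphi(\cdot, x_n(\cdot) + \overline{x}(\cdot))$ are $\FF$-measurable, satisfy $\phi_N \leq \varphi(\cdot, \overline{x})$, and decrease $\mae$ to $\phi$. Since Assumption~\ref{a:1}\ref{a:1g} gives $\int_\Omega \phi_N^+ d\mu < \pinf$, monotone convergence yields $\int_\Omega \phi_N d\mu \searrow \int_\Omega \phi d\mu$. I then partition $\Omega$ measurably according to the least index attaining $\phi_N$, namely $A_n^N := \{\omega : \varphi(\omega, x_n + \overline{x}) = \phi_N(\omega)\text{ and } \varphi(\omega, x_m + \overline{x}) > \phi_N(\omega)\text{ for all }m < n\}$. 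The naive combiner $\overline{x} + \sum_{n=0}^N 1_{A_n^N} x_n$ achieves $\phi_N$ pointwise but typically escapes $\XX$: condition \ref{t:1ii}\ref{t:1iib} insists that each summand $1_A x_n$ be supported in some $\Omega_k$ \emph{and} have compact image closure.

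This is where the Souslin hypothesis on $\XS$ pays off: every finite Borel measure on a Souslin Hausdorff space is Radon, and so the image measure of $\mu$ restricted to $A_n^N \cap \Omega_k$ under the measurable map $x_n$ is tight on $\XS$. Given $\delta > 0$ and $k \in \NN$, I select compacts $K_n \subset \XS$ so that $B_n := A_n^N \cap \Omega_k \cap x_n^{-1}(K_n)$ satisfies $\mu((A_n^N \cap \Omega_k) \setminus B_n) < \delta/(N+1)$. Then $y := \overline{x} + \sum_{n=0}^N 1_{B_n} x_n$ lies in $\XX$ by \ref{t:1ii}\ref{t:1iib}, with $\varphi(\cdot, y) = \phi_N$ on $B := \bigcup_n B_n$ and $\varphi(\cdot, y) = \varphi(\cdot, \overline{x})$ on $\Omega \setminus B$. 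Integration yields $\int_\Omega \varphi(\cdot, y) d\mu = \int_\Omega \phi_N d\mu + \int_{\Omega \setminus B}[\varphi(\cdot, \overline{x}) - \phi_N] d\mu$, with nonnegative tail integrand.

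For the final estimate, when $\int_\Omega \phi d\mu$ is finite both $\int_\Omega \phi_N d\mu$ and $\int_\Omega \varphi(\cdot, \overline{x}) d\mu$ are finite, making $\varphi(\cdot, \overline{x}) - \phi_N$ integrable; absolute continuity of the integral then handles the contribution from $\Omega_k \setminus B$ once $\delta$ is small, while $\sigma$-finiteness combined with $\Omega_k \nearrow \Omega$ handles the contribution from $\Omega \setminus \Omega_k$ once $k$ is large. Choosing $N$, then $k$, then $\delta$ delivers the bound. The case $\int_\Omega \phi d\mu = \minf$ splits in two: if $\int_\Omega \varphi(\cdot, \overline{x}) d\mu = \minf$ then $\overline{x}$ already witnesses the claim, otherwise $\int_\Omega \phi_N d\mu \to \minf$ and the same construction drives $\int_\Omega \varphi(\cdot, y) d\mu$ arbitrarily low. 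The compact-restriction step is the principal obstacle: the Souslin structure is what permits the image of a measurable mapping to be approximated by a relatively compact one up to arbitrarily small measure loss, and without this the candidate combiner would not belong to $\XX$.
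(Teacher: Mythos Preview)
Your proposal is correct and rests on the same three pillars as the paper's proof: the countable reduction from \ref{t:1ii}\ref{t:1iia}, a measurable partition of $\Omega$ according to which index realizes the running minimum, and the Souslin tightness that lets you shrink each piece to one with relatively compact image so that the combiner lands in $\XX$ via \ref{t:1ii}\ref{t:1iib}. The paper packages these same ingredients differently. It first normalizes by setting $\psi(\omega,\mathsf{x})=\varphi(\omega,\mathsf{x}+\overline{x}(\omega))-\max\{\varphi(\omega,\overline{x}(\omega)),0\}$, which forces $\psi(\cdot,\mathsf{0})\leq 0$, and then works with the class $\mathcal{R}$ of integrable functions dominating some $\Psi(x)$ with $x$ in the admissible set $\mathcal{D}$; a separate lemma (your tightness step) shows $\essinf\mathcal{R}\leq\inf_n\Psi(x_n)$, and the essential-infimum machinery supplies a sequence $(\varrho_n)$ in $\mathcal{R}$ with $\inf_n\varrho_n=\inf\psi(\cdot,\XS)$ a.e., after which the partition-and-combine step plus monotone convergence finishes. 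The practical difference is that the paper splits cases on the \emph{left} side of \eqref{e:1}: if $\inf_{x\in\XX}\int_\Omega\varphi(\cdot,x)d\mu=\minf$ there is nothing to prove, and if it is finite the monotone convergence runs against integrable majorants throughout, so no $\varepsilon$--$\delta$ or absolute-continuity bookkeeping is needed. Your split on $\int_\Omega\phi\,d\mu$ works too, but the subcase where $\int_\Omega\varphi(\cdot,\overline{x})d\mu$ is finite yet $\int_\Omega\phi_N\,d\mu=\minf$ for some $N$ deserves one more sentence: the nonnegative integrand $\varphi(\cdot,\overline{x})-\phi_N$ is then not in $L^1$, so ``absolute continuity'' must be applied to a truncation $\min\{\varphi(\cdot,\overline{x})-\phi_N,m\}$ before letting $m\uparrow\pinf$. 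With that caveat your argument is complete; the paper's normalization simply sidesteps the issue.
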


Theorem~\ref{t:1} is proved in Section~\ref{sec:3}. The second
contribution is the introduction of two new tools --- compliant
spaces and an extended notion of normal integrands. This is done in
Section~\ref{sec:4}, where these notions are illustrated through
various examples. In Section~\ref{sec:5}, compliance and normality
are utilized to build a pathway between the abstract interchange
principle of Theorem~\ref{t:1} and separate conditions
on $\XX$ and $\varphi$ that capture various application settings.
The main result of that section is Theorem~\ref{t:8}, which
encompasses in particular the interchange rules of
\cite{Hiai77,Perk18,Roc76k}, as well as those implicitly present
in \cite{Roc68a,Rock71,Vala75}. These different frameworks have so
far not been brought together and we improve them in several
directions, for instance by not requiring the completeness of
$(\Omega,\FF,\mu)$ and by relaxing the assumptions on $\XS$. This
leads to new concrete scenarios under which \eqref{e:1} holds.
Our third contribution, presented in Section~\ref{sec:6}, concerns
convex-analytical operations on integral functions. By combining
Theorem~\ref{t:1}, compliance, and normality, we broaden
conditions for evaluating Legendre conjugates, subdifferentials,
recessions, Moreau envelopes, and proximity operators of integral
functions by bringing the corresponding operations under the
integral sign. These results improve state-of-the-art convex
calculus rules from
\cite{Livre1,Penn18,Penn23,Rock71,Roc76k,Vala75}.

\section{Notation and background}
\label{sec:2}

\subsection{Measure theory}

We set $\RXX=\left[{-}\infty,{+}\infty\right]$. Let $(\Omega,\FF)$
be a measurable space and let $A$ be a subset of $\Omega$. The
characteristic function of $A$ is denoted by $1_A$ and the
complement of $A$ is denoted by $\complement A$. Now let
$(\XS,\EuScript{T}_\XS)$ be a Hausdorff topological space with
Borel $\sigma$-algebra $\BE_\XS$. We denote by
$\mathcal{L}(\Omega;\XS)$ the vector space of measurable mappings
from $(\Omega,\FF)$ to $(\XS,\BE_\XS)$. Given a measure $\mu$ on
$(\Omega,\FF)$, $\mathcal{L}^1(\Omega;\RR)$ is the subset of
$\mathcal{L}(\Omega;\RR)$ of integrable functions, and
$\mathcal{L}^1(\Omega;\RXX)$ is defined likewise. Given a separable
Banach space $(\XS,\|\Cdot\|_\XS)$, we set
$\mathcal{L}^\infty(\Omega;\XS)=
\menge{x\in\mathcal{L}(\Omega;\XS)}{\sup\|x(\Omega)\|_\XS<\pinf}$.

\subsection{Topological spaces}
Given topological spaces $(\YS,\EuScript{T}_\YS)$ and
$(\ZS,\EuScript{T}_\ZS)$,
$\EuScript{T}_\YS\boxtimes\EuScript{T}_\ZS$ denotes the
standard product topology.

Let $(\XS,\EuScript{T}_\XS)$ be a Hausdorff topological space.
The Borel $\sigma$-algebra of $(\XS,\EuScript{T}_\XS)$
is denoted by $\BE_\XS$. Furthermore, $(\XS,\EuScript{T}_\XS)$ is:
\begin{itemize}
\item
regular \cite[Section~I.8.4]{Bour71} if, for every closed
subset $\mathsf{C}$ of $(\XS,\EuScript{T}_\XS)$
and every $\mathsf{x}\in\complement\mathsf{C}$, there exist
$\mathsf{V}\in\EuScript{T}_\XS$ and
$\mathsf{W}\in\EuScript{T}_\XS$ such that
$\mathsf{C}\subset\mathsf{V}$, $\mathsf{x}\in\mathsf{W}$, and
$\mathsf{V}\cap\mathsf{W}=\emp$;
\item
a Polish space \cite[Section~IX.6.1]{Bour74} if it is separable and
there exists a distance $\mathsf{d}$ on $\XS$ that induces the same
topology as $\EuScript{T}_\XS$ and such that $(\XS,\mathsf{d})$ is
a complete metric space;
\item
a Souslin space \cite[Section~IX.6.2]{Bour74} if there exist a
Polish space $(\YS,\EuScript{T}_\YS)$ and a continuous surjective
mapping from $(\YS,\EuScript{T}_\YS)$ to $(\XS,\EuScript{T}_\XS)$;
\item
a Lusin space \cite[Section~IX.6.4]{Bour74} if there exists a
topology $\widetilde{\EuScript{T}_\XS}$ on $\XS$ such that
$\EuScript{T}_\XS\subset\widetilde{\EuScript{T}_\XS}$ and
$(\XS,\widetilde{\EuScript{T}_\XS})$ is a Polish space;
\item
a Fr\'echet space \cite[Section~II.4.1]{Bour81} if it is a locally
convex real topological vector space and there exists a
distance $\mathsf{d}$ on $\XS$ that induces
the same topology as $\EuScript{T}_\XS$ and such that
$(\XS,\mathsf{d})$ is a complete metric space.
\end{itemize}
Now let $\mathsf{f}\colon\XS\to\RXX$.
The epigraph of $\mathsf{f}$ is
\begin{equation}
\epi\mathsf{f}=\menge{(\mathsf{x},\xi)\in\XS\times\RR}{
\mathsf{f}(\mathsf{x})\leq\xi},
\end{equation}
$\mathsf{f}$ is proper if
$\minf\notin\mathsf{f}(\XS)\neq\{\pinf\}$, and $\mathsf{f}$ is 
$\EuScript{T}_\XS$-lower semicontinuous if
$\epi\mathsf{f}$ is
$\EuScript{T}_\XS\boxtimes\EuScript{T}_\RR$-closed.

\subsection{Duality}

The dual of a real topological vector space
$(\XS,\EuScript{T}_\XS)$, that is, the vector space of continuous
linear functionals on $(\XS,\EuScript{T}_\XS)$, is denoted by
$(\XS,\EuScript{T}_\XS)^*$.

Let $\XS$ and $\YS$ be real vector spaces which are in
separating duality via a bilinear form
$\pair{\Cdot}{\Cdot}_{\XS,\YS}\colon\XS\times\YS\to\RR$, that is
\cite[Section~II.6.1]{Bour81},
\begin{equation}
\begin{cases}
(\forall\mathsf{x}\in\XS)\quad
\pair{\mathsf{x}}{\Cdot}_{\XS,\YS}=0
\quad\Rightarrow\quad
\mathsf{x}=\mathsf{0}
\\
(\forall\mathsf{y}\in\YS)\quad
\pair{\Cdot}{\mathsf{y}}_{\XS,\YS}=0
\quad\Rightarrow\quad
\mathsf{y}=\mathsf{0}.
\end{cases}
\end{equation}
In addition, equip $\XS$ with a locally convex topology
$\EuScript{T}_\XS$ which is compatible with the pairing
$\pair{\Cdot}{\Cdot}_{\XS,\YS}$ in the sense that 
$(\XS,\EuScript{T}_\XS)^*
=\{\pair{\Cdot}{\mathsf{y}}_{\XS,\YS}\}_{\mathsf{y}\in\YS}$
and, likewise, equip $\YS$ with a locally convex topology
$\EuScript{T}_\YS$ which is compatible with the pairing
$\pair{\Cdot}{\Cdot}_{\XS,\YS}$ in the sense that
$(\YS,\EuScript{T}_\YS)^*
=\{\pair{\mathsf{x}}{\Cdot}_{\XS,\YS}\}_{\mathsf{x}\in\XS}$
\cite[Section~IV.1.1]{Bour81}.
Following \cite{More66}, the Legendre conjugate of 
$\mathsf{f}\colon\XS\to\RXX$ is
\begin{equation}
\label{e:l0d}
\mathsf{f}^*\colon\YS\to\RXX\colon
\mathsf{y}\mapsto\sup_{\mathsf{x}\in\XS}
\big(\pair{\mathsf{x}}{\mathsf{y}}_{\XS,\YS}-
\mathsf{f}(\mathsf{x})\big)
\end{equation}
and the Legendre conjugate of $\mathsf{g}\colon\YS\to\RXX$ is
\begin{equation}
\mathsf{g}^*\colon\XS\to\RXX\colon
\mathsf{x}\mapsto\sup_{\mathsf{y}\in\YS}
\big(\pair{\mathsf{x}}{\mathsf{y}}_{\XS,\YS}-
\mathsf{g}(\mathsf{y})\big).
\end{equation}
Let $\mathsf{f}\colon\XS\to\RXX$. If $\mathsf{f}$ is proper, its
subdifferential is the set-valued operator
\begin{equation}
\label{e:s14}
\begin{aligned}
\partial\mathsf{f}\colon\XS&\to 2^\YS\\
\mathsf{x}&\mapsto\menge{\mathsf{y}\in\YS}{
(\forall\mathsf{z}\in\XS)\,\,
\pair{\mathsf{z}-\mathsf{x}}{\mathsf{y}}_{\XS,\YS}
+\mathsf{f}(\mathsf{x})\leq\mathsf{f}(\mathsf{z})}
=\menge{\mathsf{y}\in\YS}{
\mathsf{f}(\mathsf{x})+\mathsf{f}^*(\mathsf{y})=
\pair{\mathsf{x}}{\mathsf{y}}_{\XS,\YS}}.
\end{aligned}
\end{equation}
In addition, $\mathsf{f}$ is convex if $\epi\mathsf{f}$ is a convex
subset of $\XS\times\RR$, and $\Gamma_0(\XS)$ denotes the class of
proper lower semicontinuous convex functions from $\XS$ to $\RX$.
Suppose that $\mathsf{f}\in\Gamma_0(\XS)$ and let 
$\mathsf{z}\in\dom\mathsf{f}$. The recession function of
$\mathsf{f}$ is the function in $\Gamma_0(\XS)$ defined by
\begin{equation}
\label{e:r}
\rec\mathsf{f}\colon\XS\to\RX\colon\mathsf{x}\mapsto
\lim_{0<\alpha\uparrow\pinf}
\frac{\mathsf{f}(\mathsf{z}+\alpha\mathsf{x})
-\mathsf{f}(\mathsf{z})}{\alpha}.
\end{equation}
Now suppose that, in addition, $\XS=\YS$ is Hilbertian
and $\pair{\Cdot}{\Cdot}_{\XS,\YS}$ is the scalar product of
$\XS$, and let $\gamma\in\RPP$. The Moreau envelope of
$\mathsf{f}$ of index $\gamma$ is the function in $\Gamma_0(\XS)$
defined by
\begin{equation}
\label{e:7}
\moyo{\mathsf{f}}{\gamma}\colon\XS\to\RR\colon
\mathsf{x}\mapsto
\min_{\mathsf{y}\in\XS}\bigg(\mathsf{f}(\mathsf{y})
+\dfrac{1}{2\gamma}\|\mathsf{x}-\mathsf{y}\|_\XS^2\bigg)
\end{equation}
and the proximal point of $\mathsf{x}\in\XS$ relative to
$\gamma\mathsf{f}$ is the unique point
$\prox_{\gamma\mathsf{f}}\mathsf{x}\in\XS$ such that
\begin{equation}
\label{e:7b}
\moyo{\mathsf{f}}{\gamma}(\mathsf{x})
=\mathsf{f}(\prox_{\gamma\mathsf{f}}\mathsf{x})
+\dfrac{1}{2\gamma}
\|\mathsf{x}-\prox_{\gamma\mathsf{f}}\mathsf{x}\|_\XS^2.
\end{equation}
The proximity operator $\prox_{\gamma\mathsf{f}}\colon\XS\to\XS$
thus defined can be expressed as
\begin{equation}
\label{e:8}
\prox_{\gamma\mathsf{f}}=(\Id+\gamma\partial\mathsf{f})^{-1}.
\end{equation}

\section{Proof of the interchange principle}
\label{sec:3}

Proving Theorem~\ref{t:1} necessitates a few technical facts.

\begin{lemma}
\label{l:8}
Let $(\Omega,\FF)$ be a measurable space,
let $n$ be a strictly positive integer,
and let $(\varrho_i)_{0\leq i\leq n}$ be a family in
$\mathcal{L}(\Omega;\RR)$. Then there exists a family
$(B_i)_{0\leq i\leq n}$ in $\FF$ such that 
\begin{equation}
\label{e:bx}
(B_i)_{0\leq i\leq n}\,\,\text{are pairwise disjoint},
\quad\bigcup_{i=0}^nB_i=\Omega,
\quad\text{and}\quad
\min_{0\leq i\leq n}\varrho_i=\sum_{i=0}^n1_{B_i}\varrho_i.
\end{equation}
\end{lemma}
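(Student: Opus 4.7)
The plan is to construct the $B_i$'s by first locating, for each index $i$, the set where $\varrho_i$ attains the pointwise minimum, and then breaking ties by a measurable disjointification.

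Concretely, I would first define, for each $i\in\{0,\ldots,n\}$,
\begin{equation*}
A_i=\menge{\omega\in\Omega}{(\forall j\in\{0,\ldots,n\})\,\,
\varrho_i(\omega)\leq\varrho_j(\omega)}
=\bigcap_{j=0}^n\menge{\omega\in\Omega}{\varrho_i(\omega)\leq\varrho_j(\omega)}.
\end{equation*}
Since $\varrho_i$ and $\varrho_j$ lie in $\mathcal{L}(\Omega;\RR)$, each set $\{\varrho_i\leq\varrho_j\}$ is in $\FF$, so $A_i\in\FF$. Moreover, because the minimum of a finite family of real numbers is attained, $\bigcup_{i=0}^n A_i=\Omega$. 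Of course the $A_i$'s may overlap when the minimum is achieved by several indices, so I would disjointify by setting $B_0=A_0$ and $B_i=A_i\setminus\bigcup_{j=0}^{i-1}A_j$ for $1\leq i\leq n$. The $B_i$'s clearly lie in $\FF$, are pairwise disjoint, and satisfy $\bigcup_{i=0}^n B_i=\bigcup_{i=0}^n A_i=\Omega$.

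It then remains to verify the identity $\min_{0\leq i\leq n}\varrho_i=\sum_{i=0}^n 1_{B_i}\varrho_i$. Fix $\omega\in\Omega$. By the covering and disjointness properties just established, there is a unique index $i_0$ with $\omega\in B_{i_0}$, so $\sum_{i=0}^n 1_{B_i}(\omega)\varrho_i(\omega)=\varrho_{i_0}(\omega)$. Since $B_{i_0}\subset A_{i_0}$, we have $\varrho_{i_0}(\omega)\leq\varrho_j(\omega)$ for every $j$, whence $\varrho_{i_0}(\omega)=\min_{0\leq i\leq n}\varrho_i(\omega)$.

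There is no genuine obstacle here: the lemma is a routine measurable-selection fact for finite families, and the only point requiring a little care is producing the $B_i$'s in $\FF$ with both disjointness and coverage, which is handled by the standard disjointification trick applied to the measurable sets $A_i$.
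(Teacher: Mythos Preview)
Your proof is correct. The paper, however, proceeds by induction on $n$: for $n=1$ it takes $B_0=[\varrho_0\leq\varrho_1]$ and $B_1=\complement B_0$, and for the inductive step it sets $\varrho=\min_{0\leq i\leq n}\varrho_i$, $D=[\varrho\leq\varrho_{n+1}]$, then refines the previously constructed $(B_i)_{0\leq i\leq n}$ by intersecting with $D$ and appends $C_{n+1}=\complement D$. Your argument is a direct one-shot construction: you identify for each index the measurable set $A_i$ where $\varrho_i$ realizes the minimum and then apply the standard disjointification $B_i=A_i\setminus\bigcup_{j<i}A_j$. Both routes are entirely elementary; yours is slightly more explicit (the resulting partition is written down in closed form rather than built recursively), while the paper's inductive step perhaps makes the two-term case and the recursive structure of $\min$ more visible. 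Neither approach offers a real advantage over the other for this routine lemma.
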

\begin{proof}
We proceed by induction on $n$. If $n=1$, we obtain \eqref{e:bx}
by choosing $B_0=[\varrho_0\leq\varrho_1]$ and
$B_1=\complement B_0$. Now assume that the claim is true for $n$,
let $\varrho_{n+1}\in\mathcal{L}(\Omega;\RR)$, and set 
\begin{equation}
\varrho=\min_{0\leq i\leq n}\varrho_i,
\quad
D=[\varrho\leq\varrho_{n+1}],\quad
C_{n+1}=\complement D,
\quad\text{and}\quad
\big(\forall i\in\{0,\ldots,n\}\big)\;\;C_i=B_i\cap D.
\end{equation}
Then $(C_i)_{0\leq i\leq n+1}$ is a family of pairwise disjoint
sets in $\FF$. Additionally,
\begin{equation}
\bigcup_{i=0}^{n+1}C_i
=C_{n+1}\cup\bigcup_{i=0}^nC_i
=\big(\complement D\big)\cup\bigcup_{i=0}^n(B_i\cap D)
=\big(\complement D\big)\cup D
=\Omega
\end{equation}
and
\begin{align}
\min_{0\leq i\leq n+1}\varrho_i
=\min\{\varrho,\varrho_{n+1}\}
=1_D\varrho+1_{\complement D}\varrho_{n+1}
=1_D\sum_{i=0}^n1_{B_i}\varrho_i+1_{C_{n+1}}\varrho_{n+1}
=\sum_{i=0}^{n+1}1_{C_i}\varrho_i,
\end{align}
which concludes the induction argument.
\end{proof}

\begin{lemma}
\label{l:1}
Let $(\Omega,\FF,\mu)$ be a $\sigma$-finite measure space
such that $\mu(\Omega)\neq 0$ and let $\mathcal{R}$ be a nonempty
subset of $\mathcal{L}(\Omega;\RXX)$. Then there exists an
element in $\mathcal{L}(\Omega;\RXX)$, denoted by
$\essinf\mathcal{R}$ and unique up to a set of $\mu$-measure zero,
such that
\begin{equation}
\label{e:od3}
\big(\forall\vartheta\in\mathcal{L}(\Omega;\RXX)\big)
\quad\big[\;(\forall\varrho\in\mathcal{R})\;\;
\vartheta\leq\varrho\,\,\mae\;\big]
\quad\Leftrightarrow\quad
\vartheta\leq\essinf\mathcal{R}\,\,\mae
\end{equation}
Moreover, there exists a sequence $(\varrho_n)_{n\in\NN}$ in
$\mathcal{R}$ such that
$\essinf\mathcal{R}=\inf_{n\in\NN}\varrho_n$.
\end{lemma}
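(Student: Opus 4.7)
\medskip
\noindent\textbf{Proof proposal.} The plan is to adapt the classical exhaustion construction of the essential infimum, with a preliminary change of measure to accommodate the $\sigma$-finite setting. First, I would use the $\sigma$-finiteness to pick an increasing sequence $(\Omega_k)_{k\in\NN}$ in $\FF$ with $\bigcup_{k\in\NN}\Omega_k=\Omega$ and $\mu(\Omega_k)<\pinf$, and then introduce the finite measure $\nu$ on $(\Omega,\FF)$ defined by
\begin{equation*}
\nu(A)=\sum_{k\in\NN}\frac{2^{-k-1}}{1+\mu(\Omega_k)}\mu(A\cap\Omega_k),
\end{equation*}
so that $\nu$ and $\mu$ share the same negligible sets. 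Next, I would fix a strictly increasing continuous bijection $\psi\colon\RXX\to[0,1]$ (for instance, a rescaled $\arctan$) and associate to every nonempty countable subfamily $\mathcal{R}'\subset\mathcal{R}$ the scalar
\begin{equation*}
I(\mathcal{R}')=\int_\Omega\psi\big(\inf\mathcal{R}'\big)d\nu\in[0,\nu(\Omega)],
\end{equation*}
which is well defined because $\inf\mathcal{R}'\in\mathcal{L}(\Omega;\RXX)$ as a countable pointwise infimum.

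The candidate essential infimum will be obtained by minimizing $I$. Set $\alpha=\inf\{I(\mathcal{R}'):\mathcal{R}'\subset\mathcal{R}\text{ countable and nonempty}\}$, choose countable subfamilies $(\mathcal{R}'_n)_{n\in\NN}$ with $I(\mathcal{R}'_n)\to\alpha$, and form $\mathcal{R}^\star=\bigcup_{n\in\NN}\mathcal{R}'_n$. Since $\mathcal{R}^\star$ remains countable and $\inf\mathcal{R}^\star\leq\inf\mathcal{R}'_n$ for every $n\in\NN$, we deduce $I(\mathcal{R}^\star)=\alpha$. I would then enumerate $\mathcal{R}^\star=\{\varrho_n\}_{n\in\NN}$ and declare $\vartheta^\star=\inf_{n\in\NN}\varrho_n\in\mathcal{L}(\Omega;\RXX)$ to be the sought-after $\essinf\mathcal{R}$, with this very sequence $(\varrho_n)_{n\in\NN}$ realizing the last claim of the lemma.

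The two implications in \eqref{e:od3} are then short. For any $\varrho\in\mathcal{R}$, the family $\mathcal{R}^\star\cup\{\varrho\}$ is countable with pointwise infimum $\min\{\vartheta^\star,\varrho\}\leq\vartheta^\star$, so $\alpha\leq I(\mathcal{R}^\star\cup\{\varrho\})\leq I(\mathcal{R}^\star)=\alpha$ forces $\psi\circ\min\{\vartheta^\star,\varrho\}=\psi\circ\vartheta^\star$ $\nu$-almost everywhere; strict monotonicity of $\psi$ then yields $\vartheta^\star\leq\varrho$ $\nu$-a.e., hence $\mu$-a.e. Conversely, if $\vartheta\in\mathcal{L}(\Omega;\RXX)$ satisfies $\vartheta\leq\varrho$ $\mu$-a.e.\ for every $\varrho\in\mathcal{R}$, applying this to each $\varrho_n$ and taking a countable union of the exceptional null sets produces $\vartheta\leq\vartheta^\star$ $\mu$-a.e. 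Uniqueness up to a $\mu$-null set is immediate from this equivalence.

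The main obstacle is the $\sigma$-finite (rather than finite) setting, which obstructs a direct integration of $\psi(\inf\mathcal{R}')$ against $\mu$ and, more importantly, the measure-theoretic extraction of a minimizing countable subfamily; passing to the equivalent finite measure $\nu$ is the device that removes this difficulty, after which the minimality argument combined with the strict monotonicity of $\psi$ cleanly converts the analytic minimization into the pointwise almost-everywhere inequality characterizing $\essinf\mathcal{R}$.
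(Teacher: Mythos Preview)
Your proposal is correct and follows essentially the same route as the paper: both reduce the $\sigma$-finite case to an equivalent finite (probability) measure---the paper via a strictly positive density $\chi\in\mathcal{L}^1(\Omega;\RR)$, you via the weighted-sum construction---so that $\mu$-null and $\nu$-null sets coincide. The only difference is that the paper then cites Neveu's \emph{Bases Math\'ematiques du Calcul des Probabilit\'es} for the finite-measure statement, whereas you spell out the standard exhaustion/minimization argument that underlies that citation.
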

\begin{proof}
Using Assumption~\ref{a:1}\ref{a:1d}, construct
$0<\chi\in\mathcal{L}^1(\Omega;\RR)$ such that
$\int_\Omega\chi d\mu=1$ and define
$\PP\colon\FF\to[0,1]\colon A\mapsto\int_A\chi d\mu$.
Then $(\forall A\in\FF)$ $\mu(A)=0$ $\Leftrightarrow$ $\PP(A)=0$.
Hence, the assertions follow from
\cite[Proposition~II-4-1 and its proof]{Neve70} applied in the
probability space $(\Omega,\FF,\PP)$.
\end{proof}

\begin{lemma}
\label{l:2}
Let $(\Omega,\FF,\mu)$ be a measure space,
let $(\XS,\EuScript{T}_\XS)$ be a Souslin space,
let $z\colon(\Omega,\FF)\to(\XS,\BE_\XS)$ be measurable,
and let $E\in\FF$ be such that $\mu(E)<\pinf$. Then
there exists a sequence $(E_n)_{n\in\NN}$ in $\FF$ such that
\begin{equation}
\big[\;(\forall n\in\NN)\;\;
E_n\subset E\,\,\text{and}\,\,
\overline{z(E_n)}\,\,\text{is compact}\;\big]
\quad\text{and}\quad
\mu(E)=\mu\bigg(\bigcup_{n\in\NN}E_n\bigg).
\end{equation}
\end{lemma}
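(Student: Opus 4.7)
The plan is to transfer the problem to $\XS$ via the pushforward of $\mu$ under $z$ and then invoke the inner regularity of finite Borel measures on Souslin spaces. First, I would define the finite Borel measure
\begin{equation*}
\nu\colon\BE_\XS\to\RP\colon B\mapsto\mu\big(E\cap z^{-1}(B)\big),
\end{equation*}
which is well defined because $z$ is measurable and satisfies $\nu(\XS)=\mu(E)<\pinf$.

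The crux is the classical fact that every finite Borel measure on a Hausdorff Souslin space is Radon, and in particular inner regular with respect to compact sets (see, e.g., \cite{Bour74}). Applying this to $\nu$, for each $n\in\NN$ one finds a compact set $K_n\subset\XS$ such that $\nu(\XS\setminus K_n)\leq 1/(n+1)$. I would then set
\begin{equation*}
E_n=E\cap z^{-1}(K_n).
\end{equation*}
Since $\XS$ is Hausdorff, $K_n$ is closed and hence lies in $\BE_\XS$; by measurability of $z$ this yields $E_n\in\FF$, and clearly $E_n\subset E$. Moreover, $z(E_n)\subset K_n$ implies $\overline{z(E_n)}\subset K_n$ is compact.

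To conclude, note that for every $n\in\NN$,
\begin{equation*}
\mu\bigg(E\setminus\bigcup_{k\in\NN}E_k\bigg)\leq\mu(E\setminus E_n)=\nu(\XS\setminus K_n)\leq\frac{1}{n+1},
\end{equation*}
so the left-hand side is zero. Combined with $\bigcup_{n\in\NN}E_n\subset E$ and the finiteness of $\mu(E)$, this yields $\mu(\bigcup_{n\in\NN}E_n)=\mu(E)$, as required. The main obstacle is pinning down the Radon/tightness property on Hausdorff Souslin spaces — a non-elementary but standard result; once this is in hand, the construction of $(E_n)_{n\in\NN}$ and the verification of its properties are entirely routine.
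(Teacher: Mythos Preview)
Your proof is correct and follows essentially the same route as the paper's: define the pushforward measure $\nu(B)=\mu(E\cap z^{-1}(B))$, invoke inner regularity of finite Borel measures on Souslin spaces to obtain compacta $K_n$ with $\nu(\XS\smallsetminus K_n)\to 0$, and set $E_n=E\cap z^{-1}(K_n)$. The only quibble is the citation: the tightness result lives in Bourbaki's \emph{Int\'egration} \cite[Proposition~IX.3.3]{Bour69}, not in \cite{Bour74}.
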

\begin{proof}
A simple adaptation of the proof of \cite[Lemma~5]{Vala75},
where $(\XS,\EuScript{T}_\XS)$ is a locally convex Souslin
topological vector space. 
\end{proof}

\begin{lemma}
\label{l:3}
Suppose that Assumption~\ref{a:1}\ref{a:1a}--\ref{a:1d} hold.
Let $\psi\colon(\Omega\times\XS,\FF\otimes\BE_\XS)\to\RXX$ be
measurable, let $\mathcal{Z}$ be a nonempty at most countable
subset of $\mathcal{L}(\Omega;\XS)$, and let
$(\Omega_k)_{k\in\NN}$ be an increasing sequence
of finite $\mu$-measure sets in $\FF$ such that
$\bigcup_{k\in\NN}\Omega_k=\Omega$. Define
\begin{equation}
\label{e:8vc}
\mathcal{D}=\bigcup_{z\in\mathcal{Z}}\bigcup_{k\in\NN}
\menge{1_Az}{\FF\ni A\subset\Omega_k\,\,
\text{and}\,\,\overline{z(A)}\,\,\text{is compact}}
\end{equation}
and
\begin{equation}
\label{e:8ki}
\mathcal{R}=\menge{\varrho\in\mathcal{L}^1(\Omega;\RR)}{
(\exi x\in\mathcal{D})\,\,\psi\big(\Cdot,x(\Cdot)\big)\leq
\varrho(\Cdot)\,\,\mae}.
\end{equation}
Suppose that
\begin{equation}
\label{e:h3}
\psi(\Cdot,\mathsf{0})\leq 0.
\end{equation}
Then $\mathcal{R}\neq\emp$ and $\essinf\mathcal{R}
\leq\inf_{z\in\mathcal{Z}}\psi(\Cdot,z(\Cdot))$ $\mae$
\end{lemma}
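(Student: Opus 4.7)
The claim has two parts, which I treat separately. First, to see $\mathcal{R}\neq\emp$, I would use the admissibility of $A=\emp$ in the definition of $\mathcal{D}$: for any $z\in\mathcal{Z}$ and any $k$, $\emp\subset\Omega_k$ is measurable with $\overline{z(\emp)}=\emp$ compact, so $1_\emp z=\mathsf{0}\in\mathcal{D}$; by \eqref{e:h3}, the zero function of $\mathcal{L}^1(\Omega;\RR)$ then lies in $\mathcal{R}$.

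For the essinf inequality, I would fix $z\in\mathcal{Z}$, set $\varphi_z=\psi(\Cdot,z(\Cdot))\in\mathcal{L}(\Omega;\RXX)$, and aim to prove $\essinf\mathcal{R}\leq\varphi_z$ $\mae$; since $\mathcal{Z}$ is at most countable, taking the union of the resulting exceptional null sets will deliver the inequality against $\inf_{z\in\mathcal{Z}}\varphi_z$. The strategy is to construct a countable family in $\mathcal{R}$ whose pointwise infimum dominates $\varphi_z$ almost everywhere. For $(k,m)\in\NN^2$, set $B_{k,m}=\Omega_k\cap[\varphi_z\leq m]\in\FF$, a set of finite $\mu$-measure, and apply Lemma~\ref{l:2} to $z$ on $E=B_{k,m}$ to produce $(E^{k,m}_n)_{n\in\NN}$ in $\FF$ with $E^{k,m}_n\subset B_{k,m}$, $\overline{z(E^{k,m}_n)}$ compact, and $\mu(B_{k,m}\setminus\bigcup_nE^{k,m}_n)=0$. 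Then $x^{k,m}_n=1_{E^{k,m}_n}z$ is measurable (via Assumption~\ref{a:1}\ref{a:1c}) and belongs to $\mathcal{D}$. The crucial choice is the two-sided truncation
\begin{equation*}
\varrho^{k,m}_n=1_{E^{k,m}_n}\max\big\{\min\{\varphi_z,m\},-m\big\},
\end{equation*}
bounded in absolute value by $m\cdot 1_{\Omega_k}$ and hence in $\mathcal{L}^1(\Omega;\RR)$. On $E^{k,m}_n$, the constraint $\varphi_z\leq m$ forces $\varphi_z\leq\varrho^{k,m}_n$; on the complement, \eqref{e:h3} gives $\psi(\Cdot,x^{k,m}_n)=\psi(\Cdot,\mathsf{0})\leq 0=\varrho^{k,m}_n$. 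Thus $\psi(\Cdot,x^{k,m}_n)\leq\varrho^{k,m}_n$, so $\varrho^{k,m}_n\in\mathcal{R}$ and $\essinf\mathcal{R}\leq\varrho^{k,m}_n$ $\mae$

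To conclude, I would discard the countable union of the null sets arising from Lemma~\ref{l:2} and Lemma~\ref{l:1} and argue pointwise on the residual full-measure set. If $\varphi_z(\omega)=\pinf$, the inequality is trivial. If $\varphi_z(\omega)\in\RR$, pick $k$ with $\omega\in\Omega_k$ and $m\geq|\varphi_z(\omega)|$: then $\omega\in B_{k,m}$ forces $\omega\in E^{k,m}_n$ for some $n$, and the truncation collapses to $\varrho^{k,m}_n(\omega)=\varphi_z(\omega)$. If $\varphi_z(\omega)=\minf$, for each $m$ pick $k$ with $\omega\in\Omega_k$ and $n$ with $\omega\in E^{k,m}_n$, yielding $\varrho^{k,m}_n(\omega)=-m\downarrow\minf$. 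In each case $\essinf\mathcal{R}(\omega)\leq\varphi_z(\omega)$. The main obstacle is precisely the case $\varphi_z=\minf$, where $\essinf\mathcal{R}$ must be driven down to $\minf$: it is the lower cutoff $-m$ in the truncation, combined with the $\sigma$-finite exhaustion, that supplies arbitrarily negative integrable bounds while leaving the finite case handled uniformly by the same family.
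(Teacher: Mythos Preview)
Your proof is correct and follows essentially the same approach as the paper: show $0\in\mathcal{R}$ via $A=\emp$, fix $z\in\mathcal{Z}$, exhaust $\Omega$ by finite-measure sets on which $\psi(\Cdot,z(\Cdot))$ is bounded above, apply Lemma~\ref{l:2} to extract compact-closure subsets, and build integrable majorants of $\psi(\Cdot,1_Az(\Cdot))$ via a lower truncation. The only cosmetic difference is that the paper uses a strictly positive $\chi\in\mathcal{L}^1(\Omega;\RR)$ for the upper and lower cutoffs (setting $A_n=\Omega_n\cap[\psi(\Cdot,z(\Cdot))\leq n\chi]$ and $\varrho_{n,k,m}=\max\{\psi(\Cdot,1_{A_{n,k}}z(\Cdot)),-m\chi\}$), whereas you use constant cutoffs $m$ and support your $\varrho^{k,m}_n$ on $E^{k,m}_n\subset\Omega_k$; your version is slightly simpler and works precisely because $\mu(\Omega_k)<\pinf$. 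One minor quibble: the measurability of $1_Az$ does not really come from Assumption~\ref{a:1}\ref{a:1c} but from the elementary gluing $(1_Az)^{-1}(B)=(A\cap z^{-1}(B))\cup(\complement A\cap[\mathsf{0}\in B])$; the paper also uses this without further comment.
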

\begin{proof}
Take $z\in\mathcal{Z}$ and note that
$(\forall A\in\FF)$ $1_Az\in\mathcal{L}(\Omega;\XS)$.
Since $\overline{z(\emp)}=\emp$ is compact,
it results from \eqref{e:8vc} that $0=1_\emp z\in\mathcal{D}$.
Hence, by \eqref{e:h3}, $0\in\mathcal{R}$. Next, thanks
to Assumption~\ref{a:1}\ref{a:1d}, there exists
$\chi\in\mathcal{L}^1(\Omega;\RR)$ such that $\chi>0$.
Let us set
\begin{equation}
\label{e:5h0}
(\forall n\in\NN)\quad A_n=\Omega_n\cap
\big[\psi\big(\Cdot,z(\Cdot)\big)\leq n\chi(\Cdot)\big].
\end{equation}
Lemma~\ref{l:2} asserts that there exists a family
$(A_{n,k})_{(n,k)\in\NN^2}$ in $\FF$ such that
\begin{equation}
\label{e:24r}
(\forall n\in\NN)\quad
\begin{cases}
(\forall k\in\NN)\;\;
A_{n,k}\subset A_n\,\,\text{and}\,\,
\overline{z(A_{n,k})}\,\,\text{is compact}
\\
\displaystyle
\mu(A_n)=\mu\bigg(\bigcup_{k\in\NN}A_{n,k}\bigg).
\end{cases}
\end{equation}
In turn, by \eqref{e:8vc} and \eqref{e:5h0},
\begin{equation}
\label{e:y6}
(\forall n\in\NN)(\forall k\in\NN)\quad
1_{A_{n,k}}z\in\mathcal{D}.
\end{equation}
Define
\begin{equation}
\label{e:c32}
(\forall n\in\NN)(\forall k\in\NN)(\forall m\in\NN)\quad
\varrho_{n,k,m}(\Cdot)=
\max\big\{\psi\big(\Cdot,1_{A_{n,k}}(\Cdot)z(\Cdot)\big),
-m\chi(\Cdot)\big\}.
\end{equation}
Fix temporarily $(n,k,m)\in\NN^3$. We infer from
\eqref{e:24r}, \eqref{e:5h0}, and \eqref{e:h3} that
\begin{align}
(\forall\omega\in\Omega)\quad
\psi\big(\omega,1_{A_{n,k}}(\omega)z(\omega)\big)
&=
\begin{cases}
\psi\big(\omega,z(\omega)\big),
&\text{if}\,\,\omega\in A_{n,k};\\
\psi(\omega,\mathsf{0}),
&\text{otherwise}
\end{cases}
\nonumber\\
&\leq
\begin{cases}
n\chi(\omega),
&\text{if}\,\,\omega\in A_{n,k};\\
0,
&\text{otherwise}
\end{cases}
\nonumber\\
&\leq n\chi(\omega).
\end{align}
Therefore, $-m\chi\leq\varrho_{n,k,m}\leq n\chi$, which entails
that $\varrho_{n,k,m}\in\mathcal{L}^1(\Omega;\RR)$.
In turn, we derive from \eqref{e:c32}, \eqref{e:y6}, and
\eqref{e:8ki} that $\varrho_{n,k,m}\in\mathcal{R}$.
Thus, Lemma~\ref{l:1} guarantees that there exists
$B_{n,k,m}\in\FF$ such that $\mu(B_{n,k,m})=0$ and
\begin{equation}
\label{e:uf}
\big(\forall\omega\in\complement B_{n,k,m}\big)\quad
(\essinf\mathcal{R})(\omega)\leq\varrho_{n,k,m}(\omega).
\end{equation}
Now set
\begin{equation}
\label{e:a34}
A=\bigcap_{(n,k)\in\NN^2}\complement A_{n,k},
\quad
B=\bigcup_{(n,k,m)\in\NN^3}B_{n,k,m},
\quad\text{and}\quad
C=\big[\psi\big(\Cdot,z(\Cdot)\big)<\pinf\big]\cap(A\cup B).
\end{equation}
Then $\mu(B)=0$. Furthermore, since \eqref{e:5h0} yields
$[\psi(\Cdot,z(\Cdot))<\pinf]=\bigcup_{n\in\NN}A_n$, it follows
from \eqref{e:a34} and \eqref{e:24r} that
\begin{equation}
\mu\Big(\big[\psi\big(\Cdot,z(\Cdot)\big)<\pinf\big]\cap A\Big)
\leq\sum_{n\in\NN}\mu(A_n\cap A)
\leq\sum_{n\in\NN}\mu\bigg(A_n\cap
\bigcap_{k\in\NN}\complement A_{n,k}\bigg)
=0.
\end{equation}
Hence, using \eqref{e:a34}, we obtain
\begin{equation}
\label{e:c0}
\mu(C)=0\quad\text{and}\quad
\complement C
=\big[\psi\big(\Cdot,z(\Cdot)\big)=\pinf\big]\cup
\big(\complement A\cap\complement B\big).
\end{equation}
Now suppose that $\omega\in\complement A\cap\complement B$.
Then it follows from \eqref{e:a34} that there exists
$(n,k)\in\NN^2$ such that $\omega\in A_{n,k}\cap\complement B$.
Therefore, we derive from \eqref{e:a34}, \eqref{e:uf}, and
\eqref{e:c32} that
\begin{equation}
\label{e:b9}
(\forall m\in\NN)\quad
(\essinf\mathcal{R})(\omega)
\leq\varrho_{n,k,m}(\omega)
=\max\big\{\psi\big(\omega,1_{A_{n,k}}(\omega)z(\omega)\big),
-m\chi(\omega)\big\}.
\end{equation}
Hence, letting $m\uparrow\pinf$ yields 
$(\essinf\mathcal{R})(\omega)\leq
\psi(\omega,1_{A_{n,k}}(\omega)z(\omega))
=\psi(\omega,z(\omega))$. We have thus shown that
$\essinf\mathcal{R}\leq\psi(\Cdot,z(\Cdot))$ $\mae$
Since $\mathcal{Z}$ is at most countable, the proof is complete.
\end{proof}

\bigskip

\makeatother
\def\prooft{\noindent{\bfseries Proof of Theorem~\ref{t:1}}.
\ignorespaces}
\def\endprooft{\;\:\vbox{\hrule height0.6pt\hbox{%
\vrule height 1.2ex%
width 0.8pt\hskip0.8ex\vrule width 0.8pt}\hrule height 0.6pt}}
\makeatletter

\begin{prooft}
Define
\begin{equation}
\label{e:p9}
\Phi\colon\mathcal{L}(\Omega;\XS)\to\mathcal{L}(\Omega;\RXX)\colon
x\mapsto\varphi\big(\Cdot,x(\Cdot)\big)
\end{equation}
and note that, thanks to Assumption~\ref{a:1}\ref{a:1g},
\begin{equation}
\label{e:o9}
\int_\Omega\inf\varphi(\Cdot,\XS)\,d\mu
\leq\inf_{x\in\XX}\int_\Omega\Phi(x)d\mu
\leq\int_\Omega\Phi(\overline{x})d\mu
<\pinf.
\end{equation}
Hence, the interchange rule \eqref{e:1} holds when
$\inf_{x\in\XX}\int_\Omega\Phi(x)d\mu=\minf$
and we assume henceforth that
\begin{equation}
\label{e:77}
\inf_{x\in\XX}\int_\Omega\Phi(x)d\mu\in\RR.
\end{equation}
Now define
\begin{equation}
\label{e:80}
\vartheta=\max\big\{\Phi(\overline{x}),0\big\}
\end{equation}
and
\begin{equation}
\label{e:72}
\psi\colon\Omega\times\XS\to\RXX\colon
(\omega,\mathsf{x})\mapsto
\begin{cases}
\varphi\big(\omega,\mathsf{x}+\overline{x}(\omega)\big)-
\vartheta(\omega),&\text{if}\,\,\vartheta(\omega)<\pinf;\\
\minf,&\text{if}\,\,\vartheta(\omega)=\pinf.
\end{cases}
\end{equation}
Then we derive from Assumption~\ref{a:1}\ref{a:1g} that
\begin{equation}
\label{e:r1}
\vartheta\in\mathcal{L}^1(\Omega;\RXX)
\end{equation}
and, therefore, that
\begin{equation}
\label{e:r2}
\mu\big([\vartheta=\pinf]\big)=0.
\end{equation}
On the other hand, Assumption~\ref{a:1}\ref{a:1b} ensures that the
mapping $(\Omega\times\XS,\FF\otimes\BE_\XS)\to(\XS,\BE_\XS)\colon
(\omega,\mathsf{x})\mapsto\mathsf{x}+\overline{x}(\omega)$ is
measurable. Thus, it follows from Assumption~\ref{a:1}\ref{a:1f},
\eqref{e:r1}, and \eqref{e:72} that
\begin{equation}
\label{e:f78}
\psi\,\,\text{is $\FF\otimes\BE_\XS$-measurable}.
\end{equation}
At the same time, since
\begin{equation}
\label{e:45}
\inf_{\mathsf{x}\in\XS}\psi(\Cdot,\mathsf{x})
=\inf_{\mathsf{x}\in\XS}
\varphi\big(\Cdot,\mathsf{x}+\overline{x}(\Cdot)\big)-
\vartheta(\Cdot)
=\inf_{\mathsf{x}\in\XS}\varphi(\Cdot,\mathsf{x})-\vartheta(\Cdot)
\end{equation}
and since Assumption~\ref{a:1}\ref{a:1f} yields
$\inf\varphi(\Cdot,\XS)<\pinf$, it results from \ref{t:1i} that
\begin{equation}
\label{e:pd}
\inf\psi(\Cdot,\XS)\in\mathcal{L}(\Omega;\RXX).
\end{equation}
Let us set
\begin{equation}
\Psi\colon\mathcal{L}(\Omega;\XS)\to\mathcal{L}(\Omega;\RXX)\colon
x\mapsto\psi\big(\Cdot,x(\Cdot)\big).
\end{equation}
By \eqref{e:72} and \eqref{e:r2},
\begin{equation}
\label{e:jc}
\big(\forall\omega\in\complement[\vartheta=\pinf]\big)
(\forall x\in\XX)\quad
\big(\Psi(x)\big)(\omega)
=\big(\Phi(x+\overline{x})\big)(\omega)-\vartheta(\omega).
\end{equation}
Hence, upon invoking \eqref{e:r1}, we deduce from
Assumption~\ref{a:1}\ref{a:1e}\&\ref{a:1g} that
\begin{align}
\label{e:15}
\inf_{x\in\XX}\int_\Omega\Psi(x)d\mu
&=\inf_{x\in\XX}\int_\Omega\big(\Phi(x+\overline{x})-
\vartheta\big)d\mu
\nonumber\\
&=\inf_{x\in\XX}\int_\Omega\Phi(x+\overline{x})d\mu-
\int_\Omega\vartheta d\mu
\nonumber\\
&=\inf_{x\in\XX}\int_\Omega\Phi(x)d\mu-\int_\Omega\vartheta d\mu
\end{align}
and, likewise, from \eqref{e:45} that
\begin{equation}
\label{e:14}
\int_\Omega\inf\psi(\Cdot,\XS)\,d\mu
=\int_\Omega\inf\varphi(\Cdot,\XS)\,d\mu-\int_\Omega\vartheta d\mu.
\end{equation}
Now set
\begin{equation}
\label{e:d2}
\mathcal{D}=\bigcup_{n\in\NN}\bigcup_{k\in\NN}
\menge{1_Ax_n}{\FF\ni A\subset\Omega_k
\,\,\text{and}\,\,\overline{x_n(A)}\,\,\text{is compact}}
\end{equation}
and
\begin{equation}
\label{e:pui}
\mathcal{R}=\menge{\varrho\in\mathcal{L}^1(\Omega;\RR)}{
(\exi x\in\mathcal{D})\,\,\Psi(x)\leq\varrho\,\,\mae},
\end{equation}
and note that \ref{t:1iib} states that
\begin{equation}
\label{e:d0}
\mathcal{D}\subset\XX.
\end{equation}
Using \eqref{e:72} and \eqref{e:80}, we infer from
Lemma~\ref{l:3} applied to $\mathcal{Z}=\{x_n\}_{n\in\NN}$ that
$\essinf\mathcal{R}\leq\inf_{n\in\NN}\Psi(x_n)$ $\mae$ In turn, we
derive from \eqref{e:jc}, \ref{t:1iia}, and \eqref{e:45}
that
\begin{equation}
\essinf\mathcal{R}
\leq\inf_{n\in\NN}\Psi(x_n)
=\inf_{n\in\NN}\Phi(x_n+\overline{x})-\vartheta
=\inf\varphi(\Cdot,\XS)-\vartheta
=\inf\psi(\Cdot,\XS)\,\,\mae
\end{equation}
On the other hand, \eqref{e:pui} implies that
$(\forall\varrho\in\mathcal{R})$
$\inf\psi(\Cdot,\XS)\leq\varrho(\Cdot)$ $\mae$
Hence, \eqref{e:pd} and Lemma~\ref{l:1} guarantee that
$\inf\psi(\Cdot,\XS)\leq\essinf\mathcal{R}$ $\mae$
Altogether, $\essinf\mathcal{R}=\inf\psi(\Cdot,\XS)$ $\mae$
Thus, we deduce from Lemma~\ref{l:1} that there exists a sequence
$(\varrho_n)_{n\in\NN}$ in $\mathcal{R}$ such that
\begin{equation}
\label{e:g8}
\inf_{n\in\NN}\varrho_n(\Cdot)=\inf\psi(\Cdot,\XS)\,\,\mae
\end{equation}
For every $n\in\NN$, it follows from \eqref{e:pui} and
\eqref{e:d2} that there exist $\ell_n\in\NN$,
$k_n\in\NN$, and $\FF\ni A_n\subset\Omega_{k_n}$ such that
\begin{equation}
\label{e:o4}
\overline{x_{\ell_n}(A_n)}\,\,\text{is compact}
\quad\text{and}\quad
\Psi\big(1_{A_n}x_{\ell_n}\big)\leq\varrho_n\,\,\mae
\end{equation}
Let us set
\begin{equation}
\label{e:c5}
(\forall n\in\NN)\quad\chi_n=\min_{0\leq i\leq n}\varrho_i.
\end{equation}
Fix temporarily $n\in\NN$. Lemma~\ref{l:8} asserts that there
exists a family $(B_{n,i})_{0\leq i\leq n}$ in $\FF$ such that
\begin{equation}
\label{e:bi6}
(B_{n,i})_{0\leq i\leq n}\,\,\text{are pairwise disjoint},
\quad\bigcup_{i=0}^nB_{n,i}=\Omega,\quad\text{and}\quad
\chi_n=\sum_{i=0}^n1_{B_{n,i}}\varrho_i.
\end{equation}
Now set
\begin{equation}
\label{e:y8}
y_n=\sum_{i=0}^n1_{A_i\cap B_{n,i}}x_{\ell_i}.
\end{equation}
For every $i\in\{0,\ldots,n\}$, since
$A_i\cap B_{n,i}\subset A_i\subset\Omega_{k_i}$, \eqref{e:o4}
implies that $\overline{x_{\ell_i}(A_i\cap B_{n,i})}$ is compact
and, therefore, \eqref{e:d2} and \eqref{e:d0} yield
$1_{A_i\cap B_{n,i}}x_{\ell_i}\in\mathcal{D}\subset\XX$.
Consequently, \eqref{e:y8} and Assumption~\ref{a:1}\ref{a:1e}
ensure that $y_n\in\XX$. At the same time, we derive from
\eqref{e:y8}, \eqref{e:bi6}, and \eqref{e:o4} that
\begin{equation}
\label{e:7gf}
\Psi(y_n)
=\sum_{i=0}^n1_{B_{n,i}}\Psi\big(1_{A_i}x_{\ell_i}\big)
\leq\sum_{i=0}^n1_{B_{n,i}}\varrho_i
=\chi_n\,\,\mae
\end{equation}
Therefore, since $y_n\in\XX$,
\begin{equation}
\label{e:g7}
\inf_{x\in\XX}\int_\Omega\Psi(x)d\mu
\leq\int_\Omega\Psi(y_n)d\mu
\leq\int_\Omega\chi_nd\mu.
\end{equation}
On the other hand, it results from
\eqref{e:15}, \eqref{e:77}, and \eqref{e:r1} that
$\inf_{x\in\XX}\int_\Omega\Psi(x)d\mu\in\RR$.
Thus, since $\chi_n\downarrow\inf_{i\in\NN}\varrho_i(\Cdot)=
\inf\psi(\Cdot,\XS)$ $\mae$
by virtue of \eqref{e:c5} and \eqref{e:g8}, \eqref{e:g7} and
the monotone convergence theorem
\cite[Theorem~2.8.2 and Corollary~2.8.6]{Boga07}
entail that
\begin{equation}
\inf_{x\in\XX}\int_\Omega\Psi(x)d\mu
\leq\lim\int_\Omega\chi_nd\mu
=\int_\Omega\lim\chi_n\,d\mu
=\int_\Omega\inf\psi(\Cdot,\XS)\,d\mu.
\end{equation}
Consequently, since $\int_\Omega\inf\psi(\Cdot,\XS)\,d\mu\leq
\inf_{x\in\XX}\int_\Omega\Psi(x)d\mu$, we conclude that
\begin{equation}
\inf_{x\in\XX}\int_\Omega\Psi(x)d\mu
=\int_\Omega\inf\psi(\Cdot,\XS)\,d\mu.
\end{equation}
In view of \eqref{e:15}, \eqref{e:14}, and \eqref{e:r1},
the proof is complete.
\end{prooft}

\begin{remark}
\label{r:1}
Replacing $\varphi$ by $-\varphi$ in items \ref{a:1f} and
\ref{a:1g} of Assumption~\ref{a:1} and in Theorem~\ref{t:1}
provides conditions under which
\begin{equation}
\sup_{x\in\XX}\int_\Omega\varphi\big(\omega,x(\omega)\big)
\mu(d\omega)=
\int_\Omega\sup_{\mathsf{x}\in\XS}\varphi(\omega,\mathsf{x})\,
\mu(d\omega),
\end{equation}
with the convention that, given a measurable function
$\varrho\colon(\Omega,\FF)\to\RXX$, $\int_\Omega\varrho d\mu$ is
the usual Lebesgue integral, except when the Lebesgue integral
$\int_\Omega\min\{\varrho,0\}d\mu$ is $\minf$, in which case
$\int_\Omega\varrho d\mu=\minf$.
\end{remark}

\begin{remark}
\label{r:8}
In Theorem~\ref{t:1}, suppose that
$\inf_{x\in\XX}\int_\Omega\varphi(\Cdot,x(\Cdot))
d\mu>\minf$ and let $z\in\XX$. Then
\begin{equation}
\label{e:1x}
\int_\Omega\varphi\big(\omega,z(\omega)\big)
\mu(d\omega)
=\min_{x\in\XX}\int_\Omega\varphi\big(\omega,x(\omega)\big)
\mu(d\omega)
\quad\Leftrightarrow\quad
\varphi\big(\Cdot,z(\Cdot)\big)=\min\varphi(\Cdot,\XS)\,\,\mae
\end{equation}
\end{remark}

\section{Compliant spaces and normal integrands}
\label{sec:4}

The objective of this section is to develop tools to convert the
interchange principle of Theorem~\ref{t:1} into interchange rules
formulated in terms of explicit conditions on the ambient space
$\XX$ and the integrand $\varphi$. Our framework hinges on a
notion of compliant spaces and a notion of normal integrands in an
extended sense.

\subsection{Compliant spaces}
\label{sec:41}

We introduce the following notion of a compliant space, which
generalizes and unifies the notions of decomposability employed in
the interchange rules of
\cite{Penn23,Perk18,Rock71,Roc76k,Rock09,Shap21,Vala75}.

\begin{definition}[compliance]
\label{d:1}
Suppose that Assumption~\ref{a:1}\ref{a:1a}--\ref{a:1e} holds. Then
$\XX$ is \emph{compliant} if, for every $A\in\FF$ such that
$\mu(A)<\pinf$ and every $z\in\mathcal{L}(\Omega;\XS)$ such
that $\overline{z(A)}$ is compact, $1_Az\in\XX$.
\end{definition}

\begin{proposition}
\label{p:10}
Suppose that Assumption~\ref{a:1}\ref{a:1a}--\ref{a:1e} holds,
together with one of the following:
\begin{enumerate}
\item
\label{p:10i}
$(\XS,\EuScript{T}_\XS)$ is a Souslin topological vector space and,
for every $A\in\FF$ such that $\mu(A)<\pinf$ and every
$z\in\mathcal{L}(\Omega;\XS)$ such that $z(A)$ is
$\EuScript{T}_\XS$-bounded (in the sense that, for every
neighborhood $\mathsf{V}\in\EuScript{T}_\XS$
of $\mathsf{0}$, there exists $\alpha\in\RPP$ such that
$z(A)\subset\bigcap_{\beta>\alpha}\beta\mathsf{V}$
\cite{Rudi91}), $1_Az\in\XX$.
\item
\label{p:10ii}
$\XS$ is a separable Banach space with strong topology
$\EuScript{T}_\XS$ and, for every $A\in\FF$ such that
$\mu(A)<\pinf$ and every
$z\in\mathcal{L}^\infty(\Omega;\XS)$, $1_Az\in\XX$.
\item
\label{p:10iii}
$\XS$ is a separable Banach space with strong topology
$\EuScript{T}_\XS$, $\mu(\Omega)<\pinf$, and
$\mathcal{L}^\infty(\Omega;\XS)\subset\XX$.
\item
\label{p:10iv}
$\XS$ is a separable Banach space with strong topology
$\EuScript{T}_\XS$ and $\XX$ is \emph{Rockafellar-decomposable}
\cite{Rock71} in the sense that, for every $A\in\FF$ such that
$\mu(A)<\pinf$, every $z\in\mathcal{L}^\infty(\Omega;\XS)$,
and every $x\in\XX$, $1_Az+1_{\complement A}x\in\XX$.
\item
\label{p:10v}
$(\XS,\EuScript{T}_\XS)$ is a Souslin locally convex topological
vector space and $\XX$ is \emph{Valadier-decomposable}
\cite{Vala75} in the sense that, for every $A\in\FF$ such that
$\mu(A)<\pinf$, every $z\in\mathcal{L}(\Omega;\XS)$ such
that $\overline{z(A)}$ is compact, and every $x\in\XX$,
$1_Az+1_{\complement A}x\in\XX$.
\item
\label{p:10vi}
$\XS$ is the standard Euclidean space $\RR^N$
and, for every $A\in\FF$ such that $\mu(A)<\pinf$ and every
$z\in\mathcal{L}^\infty(\Omega;\XS)$, $1_Az\in\XX$.
\end{enumerate}
Then $\XX$ is compliant.
\end{proposition}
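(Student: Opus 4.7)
The plan is to fix once and for all an arbitrary $A\in\FF$ with $\mu(A)<\pinf$ and an arbitrary $z\in\mathcal{L}(\Omega;\XS)$ such that $\overline{z(A)}$ is compact, and then verify $1_Az\in\XX$ under each of the six hypotheses. Two elementary ingredients will feed every case: (a) in any Hausdorff topological vector space, compact sets are bounded, so $z(A)\subset\overline{z(A)}$ is $\EuScript{T}_\XS$-bounded, and when $\EuScript{T}_\XS$ is a norm topology this amounts to $\sup\|z(A)\|_\XS<\pinf$, whence $1_Az\in\mathcal{L}^\infty(\Omega;\XS)$; (b) $\XX$ is a vector subspace of $\mathcal{L}(\Omega;\XS)$, so $0\in\XX$.

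First I would dispatch the four ``direct'' cases using (a). In \ref{p:10i}, fact (a) supplies the required $\EuScript{T}_\XS$-boundedness of $z(A)$, and the hypothesis immediately yields $1_Az\in\XX$. In \ref{p:10ii} and \ref{p:10vi}, (a) yields $1_Az\in\mathcal{L}^\infty(\Omega;\XS)$, and applying the respective hypothesis with the auxiliary function $1_Az$ in place of $z$ returns $1_A(1_Az)=1_Az\in\XX$. In \ref{p:10iii}, the same boundedness combined with $\mu(\Omega)<\pinf$ and the assumed inclusion gives $1_Az\in\mathcal{L}^\infty(\Omega;\XS)\subset\XX$.

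Next I would handle the two decomposability cases using (b). In \ref{p:10v}, Valadier-decomposability applied to $A$, $z$, and $x=0\in\XX$ gives $1_Az+1_{\complement A}\cdot 0=1_Az\in\XX$. In \ref{p:10iv}, Rockafellar-decomposability requires an $\mathcal{L}^\infty$ input, so I would first truncate via (a) to $\widetilde{z}=1_Az\in\mathcal{L}^\infty(\Omega;\XS)$, and then apply decomposability to $A$, $\widetilde{z}$, and $x=0\in\XX$, obtaining $1_A\widetilde{z}+1_{\complement A}\cdot 0=1_Az\in\XX$.

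No genuine obstacle is anticipated; the only minor wrinkle is the truncation step in case \ref{p:10iv}, needed because Rockafellar-decomposability is phrased with an $\mathcal{L}^\infty$ assumption, whereas Valadier-decomposability in \ref{p:10v} is phrased with a compactness assumption that aligns verbatim with the definition of compliance.
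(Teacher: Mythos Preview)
Your proposal is correct and essentially mirrors the paper's proof. The only cosmetic difference is organizational: the paper proves \ref{p:10i} directly and then disposes of the remaining cases via implications between the hypotheses (\ref{p:10iii}$\Rightarrow$\ref{p:10ii}$\Rightarrow$\ref{p:10i}, \ref{p:10iv}$\Rightarrow$\ref{p:10ii}, \ref{p:10vi}$\Rightarrow$\ref{p:10ii}, and \ref{p:10v} directly), whereas you verify compliance case by case --- but the underlying arguments (compact $\Rightarrow$ bounded via \cite[Theorem~1.15(b)]{Rudi91}, the truncation $1_Az\in\mathcal{L}^\infty(\Omega;\XS)$, and the choice $x=0\in\XX$ in the decomposability conditions) are exactly the same.
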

\begin{proof}
\ref{p:10i}:
Let $A\in\FF$ be such that $\mu(A)<\pinf$ and let
$z\in\mathcal{L}(\Omega;\XS)$ be such that $\overline{z(A)}$
is compact. It results from \cite[Theorem~1.15(b)]{Rudi91} that
$z(A)$ is $\EuScript{T}_\XS$-bounded. Thus $1_Az\in\XX$.

\ref{p:10iii}$\Rightarrow$\ref{p:10ii}$\Rightarrow$\ref{p:10i}:
Clear.

\ref{p:10iv}$\Rightarrow$\ref{p:10ii}: Clear.

\ref{p:10v}: Clear.

\ref{p:10vi}$\Rightarrow$\ref{p:10ii}: Clear.
\end{proof}

\subsection{Normal integrands}
\label{sec:42}

We introduce a notion of a normal integrand which unifies and
extends those of \cite{Roc68a,Rock71,Roc76k,Vala75}.

\begin{definition}[normality]
\label{d:n}
Let $(\XS,\EuScript{T}_\XS)$ be a Souslin space, let
$(\Omega,\FF)$ be a measurable space,
let $\varphi\colon(\Omega\times\XS,\FF\otimes\BE_\XS)\to\RXX$ be
measurable, and equip $\XS\times\RR$ with the topology
$\EuScript{T}_\XS\boxtimes\EuScript{T}_\RR$.
Then $\varphi$ is a \emph{normal integrand} if there
exist sequences $(x_n)_{n\in\NN}$ in
$\mathcal{L}(\Omega;\XS)$ and $(\varrho_n)_{n\in\NN}$ in
$\mathcal{L}(\Omega;\RR)$ such that
\begin{equation}
(\forall\omega\in\Omega)\quad
\big\{\big(x_n(\omega),\varrho_n(\omega)\big)\big\}_{n\in\NN}
\subset\epi\varphi_\omega
\quad\text{and}\quad
\overline{\epi\varphi_\omega}=
\overline{\big\{\big(x_n(\omega),
\varrho_n(\omega)\big)\big\}_{n\in\NN}}.
\end{equation}
\end{definition}

The following theorem furnishes examples of normal integrands.

\begin{theorem}
\label{t:3}
Let $(\XS,\EuScript{T}_\XS)$ be a Souslin space,
let $(\Omega,\FF)$ be a measurable space, and
let $\varphi\colon\Omega\times\XS\to\RXX$ be such that
$(\forall\omega\in\Omega)$ $\epi\varphi_\omega\neq\emp$.
Suppose that one of the following holds:
\begin{enumerate}
\item
\label{t:3i}
$\varphi$ is $\FF\otimes\BE_\XS$-measurable and
one of the following is satisfied:
\begin{enumerate}
\item
\label{t:3ia}
There exists a measure $\mu$ such that
$(\Omega,\FF,\mu)$ is complete and $\sigma$-finite.
\item
\label{t:3ib}
$\Omega$ is a Borel subset of $\RR^M$ and $\FF$ is the
associated Lebesgue $\sigma$-algebra.
\item
\label{t:3ic}
For every $\omega\in\Omega$, there exists
$\boldsymbol{\mathsf{V}}_\omega\in
\EuScript{T}_\XS\boxtimes\EuScript{T}_\RR$ such that
$\boldsymbol{\mathsf{V}}_\omega\subset\epi\varphi_\omega$
and $\overline{\boldsymbol{\mathsf{V}}_\omega}
=\overline{\epi\varphi_\omega}$.
\item
\label{t:3id}
The functions $(\varphi_\omega)_{\omega\in\Omega}$ are upper
semicontinuous.
\end{enumerate}
\item
\label{t:3ii}
The functions $(\varphi(\Cdot,\mathsf{x}))_{\mathsf{x}\in\XS}$ are
$\FF$-measurable and one of the following is satisfied:
\begin{enumerate}
\item
\label{t:3iia}
$(\XS,\EuScript{T}_\XS)$ is metrizable and, for every
$\omega\in\Omega$, there exists $\boldsymbol{\mathsf{V}}_\omega\in
\EuScript{T}_\XS\boxtimes\EuScript{T}_\RR$ such that
$\boldsymbol{\mathsf{V}}_\omega\subset\epi\varphi_\omega
=\overline{\boldsymbol{\mathsf{V}}_\omega}$.
\item
\label{t:3iib}
$(\XS,\EuScript{T}_\XS)$ is a Fr\'echet space and, for every
$\omega\in\Omega$, $\varphi_\omega\in\Gamma_0(\XS)$ and
$\intdom\varphi_\omega\neq\emp$.
\item
\label{t:3iic}
$(\XS,\EuScript{T}_\XS)$ is the standard Euclidean line $\RR$
and, for every $\omega\in\Omega$, $\varphi_\omega\in\Gamma_0(\RR)$
and $\dom\varphi_\omega$ is not a singleton.
\end{enumerate}
\item
\label{t:3iii}
$(\XS,\EuScript{T}_\XS)$ is a regular Souslin space,
the functions $(\varphi_\omega)_{\omega\in\Omega}$ are continuous,
and the functions $(\varphi(\Cdot,\mathsf{x}))_{\mathsf{x}\in\XS}$
are $\FF$-measurable.
\item
\label{t:3iv}
For some separable Fr\'echet space $(\YS,\EuScript{T}_\YS)$,
$\XS=(\YS,\EuScript{T}_\YS)^*$, $\EuScript{T}_\XS$ is the weak
topology, the functions
$(\varphi_\omega)_{\omega\in\Omega}$ are $\EuScript{T}_\XS$-lower
semicontinuous, and one of the following is satisfied:
\begin{enumerate}
\item
\label{t:3iva}
For every closed subset $\boldsymbol{\mathsf{C}}$ of
$(\XS\times\RR,\EuScript{T}_\XS\boxtimes\EuScript{T}_\RR)$,
$\menge{\omega\in\Omega}{\boldsymbol{\mathsf{C}}
\cap\epi\varphi_\omega\neq\emp}\in\FF$.
\item
\label{t:3ivb}
$(\Omega,\EuScript{T}_\Omega)$ is a Hausdorff topological space,
$\FF=\BE_\Omega$, and
$\varphi$ is $\EuScript{T}_\Omega\boxtimes\EuScript{T}_\XS$-lower
semicontinuous.
\item
\label{t:3ivc}
$(\Omega,\EuScript{T}_\Omega)$ is a Lusin space,
$\FF=\BE_\Omega$, and $\varphi$ is $\FF\otimes\BE_\XS$-measurable.
\end{enumerate}
\item
\label{t:3v}
$\XS$ is a separable reflexive Banach space, $\EuScript{T}_\XS$ is
the weak topology,
$(\Omega,\EuScript{T}_\Omega)$ is a Hausdorff topological space,
$\FF=\BE_\Omega$, the functions
$(\varphi_\omega)_{\omega\in\Omega}$ are
$\EuScript{T}_\XS$-lower semicontinuous, and one of the following
is satisfied:
\begin{enumerate}
\item
\label{t:3va}
$\varphi$ is $\EuScript{T}_\Omega\boxtimes\EuScript{T}_\XS$-lower
semicontinuous.
\item
\label{t:3vb}
$(\Omega,\EuScript{T}_\Omega)$ is a Lusin space and
$\varphi$ is $\FF\otimes\BE_\XS$-measurable.
\end{enumerate}
\item
\label{t:3vi}
$(\XS,\EuScript{T}_\XS)$ is the standard Euclidean space $\RR^N$,
$\Omega$ is a Borel subset of $\RR^M$,
$\FF=\BE_\Omega$, $\varphi$ is $\FF\otimes\BE_\XS$-measurable, and
the functions $(\varphi_\omega)_{\omega\in\Omega}$ are lower
semicontinuous.
\item
\label{t:3vii}
$(\XS,\EuScript{T}_\XS)$ is a Polish space, the functions
$(\varphi_\omega)_{\omega\in\Omega}$ are lower semicontinuous, and
one of the following is satisfied:
\begin{enumerate}
\item
\label{t:3viia}
For every $\boldsymbol{\mathsf{V}}\in
\EuScript{T}_\XS\boxtimes\EuScript{T}_\RR$,
$\menge{\omega\in\Omega}{\boldsymbol{\mathsf{V}}\cap
\epi\varphi_\omega\neq\emp}\in\FF$.
\item
\label{t:3viib}
$(\XS,\EuScript{T}_\XS)$ is the standard Euclidean space $\RR^N$
and, for every closed subset $\boldsymbol{\mathsf{C}}$ of
$\XS\times\RR$, $\menge{\omega\in\Omega}{
\boldsymbol{\mathsf{C}}\cap\epi\varphi_\omega\neq\emp}\in\FF$.
\end{enumerate}
\item
\label{t:3viii}
There exists a measurable function
$\mathsf{f}\colon(\XS,\BE_\XS)\to\RXX$ such that
$(\forall\omega\in\Omega)$ $\varphi_\omega=\mathsf{f}$.
\end{enumerate}
Then $\varphi$ is normal.
\end{theorem}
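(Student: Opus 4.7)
The plan is to recognize that the definition of normal integrand is exactly the existence of a \emph{Castaing-type representation} of the epigraphic multifunction $F_\varphi\colon\omega\mapsto\epi\varphi_\omega$: a countable family of measurable selections taking values in $\epi\varphi_\omega$ and dense in $\overline{\epi\varphi_\omega}$ for every $\omega$. Accordingly, in each case of the theorem, I will first extract measurability of $F_\varphi$ in an appropriate sense and then invoke a measurable selection / Castaing-type theorem to produce the required sequences $(x_n)_{n\in\NN}$ and $(\varrho_n)_{n\in\NN}$. Separability of the Souslin space $(\XS\times\RR,\EuScript{T}_\XS\boxtimes\EuScript{T}_\RR)$ is what makes a single countable family suffice uniformly in $\omega$.

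The template case is \ref{t:3viii}: choose a countable dense subset $\{(\mathsf{x}_n,\xi_n)\}_{n\in\NN}$ of $\epi\mathsf{f}$ and take $x_n\equiv\mathsf{x}_n$, $\varrho_n\equiv\xi_n$. For \ref{t:3i}, joint $\FF\otimes\BE_\XS$-measurability of $\varphi$ puts the graph of $F_\varphi$ in $\FF\otimes\BE_\XS\otimes\BE_\RR$; under completeness \ref{t:3ia} (which subsumes \ref{t:3ib} via the Lebesgue $\sigma$-algebra), one applies the projection theorem for Souslin sets to obtain countably many measurable selections dense in $\overline{\epi\varphi_\omega}$; cases \ref{t:3ic}--\ref{t:3id} bypass completeness by replacing $\epi\varphi_\omega$ with an open inner set $\boldsymbol{\mathsf{V}}_\omega$ having the same closure. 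The strategy for cases \ref{t:3ii}--\ref{t:3vii} is to reduce them to \ref{t:3i}: separate measurability of $\mathsf{x}\mapsto\varphi(\Cdot,\mathsf{x})$ combined with continuity or lower semicontinuity of $\varphi_\omega$ and separability arguments (metrizability in \ref{t:3iia}; continuity of convex functions on $\intdom\varphi_\omega$ in \ref{t:3iib}--\ref{t:3iic}; continuity of $\varphi_\omega$ on a regular Souslin space in \ref{t:3iii}) yields joint $\FF\otimes\BE_\XS$-measurability through a Carathéodory-type approximation on a countable dense subset of $\XS$.

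For the weak-topology cases \ref{t:3iv}--\ref{t:3v}, I exploit the fact that a separable Fréchet space, or a separable reflexive Banach space, admits a countable weakly dense subset $\{\mathsf{x}_n\}_{n\in\NN}$; weak lower semicontinuity of $\varphi_\omega$ lets me evaluate $\varphi$ along this subset, and hypotheses \ref{t:3iva}--\ref{t:3ivc} and \ref{t:3va}--\ref{t:3vb} each supply a measurable selection principle for $F_\varphi$ --- either directly (the ``hit'' condition on closed sets), from joint lower semicontinuity on a Hausdorff $\Omega$, or via a change of topology on a Lusin space $\Omega$ that upgrades $\FF\otimes\BE_\XS$-measurability to joint lower semicontinuity. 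Cases \ref{t:3vi} and \ref{t:3vii} then fall out as specializations where lower semicontinuity in $\mathsf{x}$ together with a Polish or Euclidean structure on $\XS$ makes the epigraphic multifunction closed-valued and measurable in the Kuratowski--Ryll-Nardzewski sense.

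The main technical obstacle will be ensuring that the selections land \emph{inside} $\epi\varphi_\omega$ rather than merely in its closure, while preserving density. This is precisely where the ``inner open set'' device of \ref{t:3ic} and \ref{t:3iia} does the work: any selection $(x_n(\omega),\varrho_n(\omega))\in\boldsymbol{\mathsf{V}}_\omega$ is automatically in $\epi\varphi_\omega$, yet retains density in $\overline{\epi\varphi_\omega}$. For the convex cases \ref{t:3iib}--\ref{t:3iic} the required inner set is the strict epigraph over $\intdom\varphi_\omega$, and continuity of convex functions on this interior set supplies the measurability bridge; for the upper semicontinuous case \ref{t:3id} the strict epigraph $\{\varphi<\Cdot\}$ plays the same role. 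In the remaining cases one can simply inflate a candidate selection by a measurable positive function tending to zero to guarantee containment in $\epi\varphi_\omega$ without disturbing density.
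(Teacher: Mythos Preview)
Your overall plan matches the paper's approach: both recognize normality as a Castaing representation of the epigraphic multifunction $\omega\mapsto\epi\varphi_\omega$ and organize the cases by first securing an appropriate measurability of this multifunction and then invoking a selection theorem. Your treatment of \ref{t:3viii}, \ref{t:3ia}--\ref{t:3id}, \ref{t:3iia}--\ref{t:3iic}, and \ref{t:3vi}--\ref{t:3vii} aligns closely with the paper's, including the ``inner open set'' device for ensuring that the selections land in $\epi\varphi_\omega$ rather than only in its closure.

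Two places in your sketch are underspecified, and the paper supplies concrete machinery that you do not name. For \ref{t:3iii}, a regular Souslin space need not be metrizable, so a direct Carath\'eodory approximation over a countable dense subset of $\XS$ does not obviously yield joint $\FF\otimes\BE_\XS$-measurability; the paper invokes Saint-Pierre's theorem to pass to a \emph{finer metrizable} Souslin topology $\widetilde{\EuScript{T}_\XS}\supset\EuScript{T}_\XS$ with the same Borel $\sigma$-algebra, and then reduces to \ref{t:3iia} in that topology. For \ref{t:3iv}, the weak dual $\XS=(\YS,\EuScript{T}_\YS)^*$ is not metrizable and not first-countable, so your plan of ``evaluating $\varphi$ along a countable weakly dense subset'' does not recover $\varphi_\omega$ from lower semicontinuity alone, and the standard Polish-space selection theorems do not apply to $\omega\mapsto\epi\varphi_\omega$ directly. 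The paper's device is to cover $\XS\times\RR$ by a sequence $(\boldsymbol{\mathsf{C}}_n)_{n\in\NN}$ of weakly compact sets, each of which is compact \emph{metrizable} (hence Polish); one then applies Himmelberg's selection theorems to the closed-valued multifunctions $\omega\mapsto\boldsymbol{\mathsf{C}}_n\cap\epi\varphi_\omega$ on each piece and glues the resulting selections. The subcases \ref{t:3ivb} and \ref{t:3ivc} are reduced to the ``hit closed sets'' condition \ref{t:3iva} via this same $\sigma$-compact decomposition together with, respectively, properness of the projection $\Omega\times\boldsymbol{\mathsf{C}}_n\to\Omega$ and the Brown--Purves measurable-extrema theorem on the Polish upgrade of the Lusin space $\Omega$. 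Without this $\sigma$-compact bridge, your sketch for \ref{t:3iv} has a genuine gap.
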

\begin{proof}
Set $\boldsymbol{G}=\menge{(\omega,\mathsf{x},\xi)\in
\Omega\times\XS\times\RR}{\varphi(\omega,\mathsf{x})\leq\xi}$.
Then
\begin{equation}
\label{e:s03}
\boldsymbol{G}=\menge{(\omega,\mathsf{x},\xi)\in
\Omega\times\XS\times\RR}{(\mathsf{x},\xi)\in\epi\varphi_\omega}.
\end{equation}
Further, \cite[Lemma~6.4.2(i)]{Boga07} yields
\begin{equation}
\label{e:s04}
\varphi\,\,\text{is $\FF\otimes\BE_\XS$-measurable}
\quad\Leftrightarrow\quad
\boldsymbol{G}\in\FF\otimes\BE_\XS\otimes\BE_\RR
=\FF\otimes\BE_{\XS\times\RR}.
\end{equation}
We also note that $(\XS\times\RR,
\EuScript{T}_\XS\boxtimes\EuScript{T}_\RR)$ is a Souslin space
\cite[Proposition~IX.6.7]{Bour74}.

\ref{t:3ia}:
Applying \cite[Theorem~III.22]{Cast77} to the mapping
$\Upsilon\colon\Omega\to
2^{\XS\times\RR}\colon\omega\mapsto\epi\varphi_\omega$, we deduce
from \eqref{e:s03} and \eqref{e:s04} that there exist a sequence
$(x_n)_{n\in\NN}$ of mappings from $\Omega$ to $\XS$ and a
sequence $(\varrho_n)_{n\in\NN}$ of functions from $\Omega$ to
$\RR$ such that
\begin{equation}
\label{e:10ds}
(\forall n\in\NN)\quad
(\Omega,\FF)\to(\XS\times\RR,\BE_{\XS\times\RR})\colon
\omega\mapsto\big(x_n(\omega),\varrho_n(\omega)\big)
\,\,\text{is measurable}
\end{equation}
and
\begin{equation}
(\forall\omega\in\Omega)\quad
\big\{\big(x_n(\omega),\varrho_n(\omega)\big)\big\}_{n\in\NN}
\subset\Upsilon(\omega)
\quad\text{and}\quad
\overline{\Upsilon(\omega)}=
\overline{\big\{\big(x_n(\omega),
\varrho_n(\omega)\big)\big\}_{n\in\NN}}.
\end{equation}
Moreover, since $\BE_{\XS\times\RR}=\BE_\XS\otimes\BE_\RR$
\cite[Lemma~6.4.2(i)]{Boga07},
it follows from \eqref{e:10ds} that, for every $n\in\NN$,
$x_n\colon(\Omega,\FF)\to(\XS,\BE_\XS)$ and
$\varrho_n\colon(\Omega,\FF)\to(\RR,\BE_\RR)$
are measurable. Altogether, $\varphi$ is normal.

\ref{t:3ib}$\Rightarrow$\ref{t:3ia}:
Take $\mu$ to be the Lebesgue measure on $\Omega$.

\ref{t:3ic}:
Let $\{(\mathsf{x}_n,\xi_n)\}_{n\in\NN}$ be a dense set in
$(\XS\times\RR,\EuScript{T}_\XS\boxtimes\EuScript{T}_\RR)$ and
define
\begin{equation}
\label{e:rc0}
(\forall n\in\NN)\quad
\Omega_n=\big[\varphi(\Cdot,\mathsf{x}_n)\leq\xi_n\big].
\end{equation}
On the one hand, the $\FF\otimes\BE_\XS$-measurability of
$\varphi$ ensures
that $(\forall n\in\NN)$ $\Omega_n\in\FF$.
On the other hand, for every $\omega\in\Omega$,
since $\boldsymbol{\mathsf{V}}_\omega$ is open,
there exists $n\in\NN$ such that
$(\mathsf{x}_n,\xi_n)\in\boldsymbol{\mathsf{V}}_\omega
\subset\epi\varphi_\omega$, which yields
$\omega\in\Omega_n$ and thus
$\Omega=\bigcup_{k\in\NN}\Omega_k$.
This yields a sequence $(\Theta_n)_{n\in\NN}$ of pairwise disjoint
sets in $\FF$ such that
\begin{equation}
\label{e:tf0}
\Theta_0=\Omega_0,\quad
\bigcup_{n\in\NN}\Theta_n=\Omega,\quad\text{and}\quad
(\forall n\in\NN)\;\;\Theta_n\subset\Omega_n.
\end{equation}
For every $\omega\in\Omega$, there exists a unique 
$n_\omega\in\NN$ such that $\omega\in\Theta_{n_\omega}$.
Now define
\begin{equation}
z\colon\Omega\to\XS\colon\omega\mapsto\mathsf{x}_{n_\omega}
\quad\text{and}\quad
\vartheta\colon\Omega\to\RR\colon\omega\mapsto\xi_{n_\omega}.
\end{equation}
Then
\begin{equation}
(\forall\mathsf{V}\in\EuScript{T}_\XS)\quad
z^{-1}(\mathsf{V})=\bigcup_{\substack{n\in\NN\\
\mathsf{x}_n\in\mathsf{V}}}\Theta_n\in\FF,
\end{equation}
which implies that $z\in\mathcal{L}(\Omega;\XS)$. Likewise,
$\vartheta\in\mathcal{L}(\Omega;\RR)$. Next, define
\begin{equation}
\label{e:xxi0}
(\forall n\in\NN)\quad
x_n\colon\Omega\to\XS\colon\omega\mapsto
\begin{cases}
\mathsf{x}_n,&\text{if}\,\,\omega\in\Omega_n;\\
z(\omega),&\text{if}\,\,\omega\in\complement\Omega_n
\end{cases}
\end{equation}
and
\begin{equation}
\label{e:xxi1}
(\forall n\in\NN)\quad
\varrho_n\colon\Omega\to\RR\colon\omega\mapsto
\begin{cases}
\xi_n,&\text{if}\,\,\omega\in\Omega_n;\\
\vartheta(\omega),
&\text{if}\,\,\omega\in\complement\Omega_n.
\end{cases}
\end{equation}
Then $(x_n)_{n\in\NN}$ and $(\varrho_n)_{n\in\NN}$ are sequences
in $\mathcal{L}(\Omega;\XS)$ and
$\mathcal{L}(\Omega;\RR)$, respectively.
Moreover, we deduce from \eqref{e:xxi0}, \eqref{e:xxi1},
\eqref{e:rc0}, and \eqref{e:tf0} that
\begin{equation}
(\forall\omega\in\Omega)(\forall n\in\NN)\quad
\big(x_n(\omega),\varrho_n(\omega)\big)\in\epi\varphi_\omega.
\end{equation}
On the other hand, for every $\omega\in\Omega$,
since $\{(\mathsf{x}_n,\xi_n)\}_{n\in\NN}$ is
dense in $(\XS\times\RR,\EuScript{T}_\XS\boxtimes\EuScript{T}_\RR)$
and since
$\boldsymbol{\mathsf{V}}_\omega$ is open, we infer from
\eqref{e:xxi0}, \eqref{e:xxi1}, and \eqref{e:rc0} that
\begin{equation}
\overline{\big\{\big(x_n(\omega),
\varrho_n(\omega)\big)\big\}_{n\in\NN}}
=\overline{\big\{(\mathsf{x}_n,\xi_n)\big\}_{n\in\NN}\cap
\epi\varphi_\omega}
\supset\overline{\big\{(\mathsf{x}_n,\xi_n)\big\}_{n\in\NN}\cap
\boldsymbol{\mathsf{V}}_\omega}
=\overline{\boldsymbol{\mathsf{V}}_\omega}
=\overline{\epi\varphi_\omega}.
\end{equation}
Consequently, $\varphi$ is normal.

\ref{t:3id}$\Rightarrow$\ref{t:3ic}:
Set $(\forall\omega\in\Omega)$
$\boldsymbol{\mathsf{V}}_\omega=\menge{(\mathsf{x},\xi)\in
\XS\times\RR}{\varphi(\omega,\mathsf{x})<\xi}$.
Now fix $\omega\in\Omega$ and
$(\mathsf{x},\xi)\in\epi\varphi_\omega$.
Since the sequence $(\mathsf{x},\xi+2^{-n})_{n\in\NN}$
lies in $\boldsymbol{\mathsf{V}}_\omega$ and
$(\mathsf{x},\xi+2^{-n})\to(\mathsf{x},\xi)$, we obtain
$(\mathsf{x},\xi)\in\overline{\boldsymbol{\mathsf{V}}_\omega}$.
Hence $\overline{\boldsymbol{\mathsf{V}}_\omega}=
\overline{\epi\varphi_\omega}$.
At the same time, the upper semicontinuity of $\varphi_\omega$
guarantees that $\boldsymbol{\mathsf{V}}_\omega$ is open.

\ref{t:3iia}$\Rightarrow$\ref{t:3ic}:
It suffices to show that $\varphi$ is
$\FF\otimes\BE_\XS$-measurable. Let
$\{(\mathsf{x}_n,\xi_n)\}_{n\in\NN}$ be dense in
$(\XS\times\RR,\EuScript{T}_\XS\boxtimes\EuScript{T}_\RR)$,
let $\boldsymbol{\mathsf{V}}\in
\EuScript{T}_\XS\boxtimes\EuScript{T}_\RR$,
and set $\mathbb{K}=\menge{n\in\NN}{
(\mathsf{x}_n,\xi_n)\in\boldsymbol{\mathsf{V}}}$. Then
\begin{equation}
\label{e:vd9}
\overline{\{(\mathsf{x}_n,\xi_n)\}_{n\in\mathbb{K}}}
=\overline{\{(\mathsf{x}_n,\xi_n)\}_{n\in\NN}\cap
\boldsymbol{\mathsf{V}}}
=\overline{\boldsymbol{\mathsf{V}}}.
\end{equation}
Suppose that there exists $\omega\in\Omega$ such that
\begin{equation}
\label{e:tx}
\boldsymbol{\mathsf{V}}\cap\epi\varphi_\omega\neq\emp
\quad\text{and}\quad
(\forall n\in\mathbb{K})\;\;
(\mathsf{x}_n,\xi_n)\notin\epi\varphi_\omega.
\end{equation}
Since $\boldsymbol{\mathsf{V}}$ is open and
$\overline{\boldsymbol{\mathsf{V}}_\omega}=\epi\varphi_\omega$,
there exists $(\mathsf{y},\eta)
\in\boldsymbol{\mathsf{V}}\cap\boldsymbol{\mathsf{V}}_\omega$.
Therefore, we infer from \eqref{e:vd9} that there exists a subnet
$(\mathsf{x}_{k(b)},\xi_{k(b)})_{b\in B}$ of
$(\mathsf{x}_n,\xi_n)_{n\in\mathbb{K}}$ such that
$(\mathsf{x}_{k(b)},\xi_{k(b)})\to(\mathsf{y},\eta)$.
This and \eqref{e:tx} force
$(\mathsf{y},\eta)
\in\overline{\complement\epi\varphi_\omega}
=\overline{\complement
\overline{\boldsymbol{\mathsf{V}}_\omega}}
=\complement\inte
\overline{\boldsymbol{\mathsf{V}}_\omega}$,
which is in contradiction with the inclusion
$(\mathsf{y},\eta)\in\boldsymbol{\mathsf{V}}_\omega$.
Hence, the $\FF$-measurability of the functions
$(\varphi(\Cdot,\mathsf{x}))_{\mathsf{x}\in\XS}$ yields
\begin{equation}
\menge{\omega\in\Omega}{
\boldsymbol{\mathsf{V}}\cap\epi\varphi_\omega\neq\emp}
=\bigcup_{n\in\mathbb{K}}\menge{\omega\in\Omega}{
(\mathsf{x}_n,\xi_n)\in\epi\varphi_\omega}
=\bigcup_{n\in\mathbb{K}}
\big[\varphi(\Cdot,\mathsf{x}_n)\leq\xi_n\big]
\in\FF.
\end{equation}
Therefore, since
$(\XS\times\RR,\EuScript{T}_\XS\boxtimes\EuScript{T}_\RR)$ is a
separable metrizable space
and the sets $(\epi\varphi_\omega)_{\omega\in\Omega}$ are closed,
\cite[Theorem~3.5(i)]{Himm75} and \eqref{e:s03} imply that
$\boldsymbol{G}\in\FF\otimes\BE_{\XS\times\RR}$.
Consequently, \eqref{e:s04} asserts that $\varphi$ is
$\FF\otimes\BE_\XS$-measurable.

\ref{t:3iib}$\Rightarrow$\ref{t:3iia}:
Set $(\forall\omega\in\Omega)$
$\boldsymbol{\mathsf{V}}_\omega=\inte\epi\varphi_\omega$.
For every $\omega\in\Omega$, the assumption ensures that
$\epi\varphi_\omega$ is closed and convex, and that
$\boldsymbol{\mathsf{V}}_\omega\neq\emp$
\cite[Theorem~2.2.20 and Corollary~2.2.10]{Zali02}.
Thus \cite[Theorem~1.1.2(iv)]{Zali02} yields
$(\forall\omega\in\Omega)$
$\epi\varphi_\omega=\overline{\boldsymbol{\mathsf{V}}_\omega}$.

\ref{t:3iic}$\Rightarrow$\ref{t:3iib}:
Clear.

\ref{t:3iii}:
It results from \cite{Sain76} that there exists a topology
$\widetilde{\EuScript{T}_\XS}$ on $\XS$ such that
\begin{equation}
\label{e:8dy}
\EuScript{T}_\XS\subset\widetilde{\EuScript{T}_\XS}
\end{equation}
and
\begin{equation}
\label{e:8dx}
\big(\XS,\widetilde{\EuScript{T}_\XS}\big)\,\,
\text{is a metrizable Souslin space}.
\end{equation}
Set $(\forall\omega\in\Omega)$
$\boldsymbol{\mathsf{V}}_\omega=\menge{(\mathsf{x},\xi)\in
\XS\times\RR}{\varphi(\omega,\mathsf{x})<\xi}$.
Then, since \eqref{e:8dy} implies that
\begin{equation}
\label{e:gf}
(\forall\omega\in\Omega)\quad
\varphi_\omega\,\,
\text{is $\widetilde{\EuScript{T}_\XS}$-continuous},
\end{equation}
it follows that
\begin{equation}
\label{e:r0d}
(\forall\omega\in\Omega)\quad
\boldsymbol{\mathsf{V}}_\omega\in
\widetilde{\EuScript{T}_\XS}\boxtimes\EuScript{T}_\RR
\quad\text{and}\quad
\overline{\boldsymbol{\mathsf{V}}_\omega}^{
\widetilde{\EuScript{T}_\XS}\boxtimes\EuScript{T}_\RR}
=\overline{\epi\varphi_\omega}^{\widetilde{\EuScript{T}_\XS}
\boxtimes\EuScript{T}_\RR}
=\epi\varphi_\omega.
\end{equation}
On the other hand, we derive from \eqref{e:8dx}, \eqref{e:8dy},
and \cite[Corollary~2, p.~101]{Schw73} that
the Borel $\sigma$-algebra of $(\XS,\widetilde{\EuScript{T}_\XS})$
is $\BE_\XS$.
Altogether, applying \ref{t:3iia} to the metrizable Souslin space
$(\XS,\widetilde{\EuScript{T}_\XS})$, 
we deduce that $\varphi$ is $\FF\otimes\BE_\XS$-measurable and
that there exist sequences $(x_n)_{n\in\NN}$ in
$\mathcal{L}(\Omega;\XS)$ and $(\varrho_n)_{n\in\NN}$ in
$\mathcal{L}(\Omega;\RR)$ such that
\begin{equation}
(\forall\omega\in\Omega)\quad
\big\{\big(x_n(\omega),\varrho_n(\omega)\big)\big\}_{n\in\NN}
\subset\epi\varphi_\omega
\quad\text{and}\quad
\overline{\epi\varphi_\omega}^{
\widetilde{\EuScript{T}_\XS}\boxtimes\EuScript{T}_\RR}=
\overline{\big\{\big(x_n(\omega),
\varrho_n(\omega)\big)\big\}_{n\in\NN}}^{
\widetilde{\EuScript{T}_\XS}\boxtimes\EuScript{T}_\RR}.
\end{equation}
Hence, by \eqref{e:8dy} and \eqref{e:r0d},
\begin{align}
\overline{\big\{\big(x_n(\omega),
\varrho_n(\omega)\big)\big\}_{n\in\NN}}
\supset\overline{\big\{\big(x_n(\omega),
\varrho_n(\omega)\big)\big\}_{n\in\NN}}^{
\widetilde{\EuScript{T}_\XS}\boxtimes\EuScript{T}_\RR}
=\overline{\epi\varphi_\omega}^{
\widetilde{\EuScript{T}_\XS}\boxtimes\EuScript{T}_\RR}
=\epi\varphi_\omega.
\end{align}
Consequently, $\varphi$ is normal.

\ref{t:3iv}:
It follows from \cite[Section~II.4.3]{Bour81} that
$(\YS\times\RR,\EuScript{T}_\YS\boxtimes\EuScript{T}_\RR)$ is a
separable Fr\'echet space.
Moreover, by \cite[Proposition~II.6.8]{Bour81}, $\XS\times\RR=
(\YS\times\RR,\EuScript{T}_\YS\boxtimes\EuScript{T}_\RR)^*$
and the weak topology of $\XS\times\RR$ is
$\EuScript{T}_\XS\boxtimes\EuScript{T}_\RR$. In turn, arguing as in
\cite[Section~IV-1.7]{Scha99}, we deduce that there exists
a covering
$(\boldsymbol{\mathsf{C}}_n)_{n\in\NN}$ of $\XS\times\RR$,
with respective
$\EuScript{T}_\XS\boxtimes\EuScript{T}_\RR$-induced topologies
$(\EuScript{T}_{\boldsymbol{\mathsf{C}}_n})_{n\in\NN}$,
such that, for every $n\in\NN$,
$(\boldsymbol{\mathsf{C}}_n,
\EuScript{T}_{\boldsymbol{\mathsf{C}}_n})$
is a compact separable metrizable space, hence a Polish space. We
also introduce
\begin{equation}
\label{e:q0x}
(\forall n\in\NN)\quad
Q_n\colon\Omega\times\boldsymbol{\mathsf{C}}_n\to\Omega\colon
(\omega,\mathsf{x},\xi)\mapsto\omega.
\end{equation}
Note that, for every subset $\boldsymbol{\mathsf{C}}$
of $\XS\times\RR$,
\begin{equation}
\label{e:rz2}
\menge{\omega\in\Omega}{\boldsymbol{\mathsf{C}}
\cap\epi\varphi_\omega\neq\emp}
=\bigcup_{n\in\NN}
\menge{\omega\in\Omega}{\boldsymbol{\mathsf{C}}\cap
\boldsymbol{\mathsf{C}}_n
\cap\epi\varphi_\omega\neq\emp}
=\bigcup_{n\in\NN}Q_n\Big(\boldsymbol{G}\cap
\big(\Omega\times(\boldsymbol{\mathsf{C}}\cap
\boldsymbol{\mathsf{C}}_n)\big)\Big).
\end{equation}

\ref{t:3iva}:
For every $n\in\NN$, set
\begin{equation}
\Omega_n=\menge{\omega\in\Omega}{
\boldsymbol{\mathsf{C}}_n\cap\epi\varphi_\omega\neq\emp},
\end{equation}
denote by $\FF_n$ the trace $\sigma$-algebra of $\FF$ on
$\Omega_n$, and observe that
\begin{equation}
\label{e:0f}
\Omega_n\in\FF\quad\text{and}\quad\FF_n\subset\FF.
\end{equation}
Now define
\begin{equation}
\label{e:kx}
\mathbb{K}=\menge{n\in\NN}{\Omega_n\neq\emp}
\quad\text{and}\quad
(\forall n\in\mathbb{K})\;\;
K_n\colon\Omega_n\to 2^{\boldsymbol{\mathsf{C}}_n}\colon
\omega\mapsto\boldsymbol{\mathsf{C}}_n\cap\epi\varphi_\omega.
\end{equation}
Then
\begin{equation}
\label{e:ky}
\mathbb{K}\neq\emp\quad\text{and}\quad
\bigcup_{n\in\mathbb{K}}\Omega_n=\Omega.
\end{equation}
Furthermore, the
$\EuScript{T}_\XS\boxtimes\EuScript{T}_\RR$-closedness of
$(\epi\varphi_\omega)_{\omega\in\Omega}$ guarantees that
\begin{equation}
(\forall n\in\mathbb{K})(\forall\omega\in\Omega)\quad
K_n(\omega)\;\text{is
$\EuScript{T}_{\boldsymbol{\mathsf{C}}_n}$-closed}.
\end{equation}
On the other hand, for every $n\in\mathbb{K}$ and every
closed subset
$\boldsymbol{\mathsf{D}}$ of $(\boldsymbol{\mathsf{C}}_n,
\EuScript{T}_{\boldsymbol{\mathsf{C}}_n})$,
there exists a closed subset
$\boldsymbol{\mathsf{E}}$ of
$(\XS\times\RR,\EuScript{T}_\XS\boxtimes\EuScript{T}_\RR)$
such that
$\boldsymbol{\mathsf{D}}=\boldsymbol{\mathsf{C}}_n\cap
\boldsymbol{\mathsf{E}}$ \cite[Section~I.3.1]{Bour71}
and therefore, since $\boldsymbol{\mathsf{C}}_n$ is
$\EuScript{T}_\XS\boxtimes\EuScript{T}_\RR$-closed,
we deduce from \eqref{e:0f} that
\begin{equation}
\menge{\omega\in\Omega_n}{\boldsymbol{\mathsf{D}}\cap K_n(\omega)
\neq\emp}
=\Omega_n\cap\menge{\omega\in\Omega}{
\boldsymbol{\mathsf{C}}_n\cap\boldsymbol{\mathsf{E}}
\cap\epi\varphi_\omega\neq\emp}
\in\FF_n.
\end{equation}
Hence, for every $n\in\mathbb{K}$, since
$(\boldsymbol{\mathsf{C}}_n,
\EuScript{T}_{\boldsymbol{\mathsf{C}}_n})$ is a Polish space,
we deduce from
\cite[Theorem~3.5(i), Theorem~5.1, and Theorem~5.6]{Himm75}
that there exist measurable mappings
$\boldsymbol{y}_n$ and $(\boldsymbol{z}_{n,k})_{k\in\NN}$ from
$(\Omega_n,\FF_n)$ to
$(\boldsymbol{\mathsf{C}}_n,\BE_{\boldsymbol{\mathsf{C}}_n})$
such that
\begin{equation}
\label{e:tp}
(\forall\omega\in\Omega_n)\quad
\boldsymbol{y}_n(\omega)\in K_n(\omega)\quad\text{and}\quad
K_n(\omega)
=\overline{\big\{\boldsymbol{z}_{n,k}(\omega)\big\}_{
k\in\NN}}^{\EuScript{T}_{\boldsymbol{\mathsf{C}}_n}}
=\boldsymbol{\mathsf{C}}_n\cap
\overline{\big\{\boldsymbol{z}_{n,k}(\omega)\big\}_{k\in\NN}}.
\end{equation}
In addition, since \cite[Theorem~3.5(i)]{Himm75} asserts that
\begin{align}
&
(\forall n\in\mathbb{K})\quad
\menge{(\omega,\mathsf{x},\xi)\in
\Omega_n\times\boldsymbol{\mathsf{C}}_n}{
(\mathsf{x},\xi)\in\boldsymbol{\mathsf{C}}_n\cap\epi\varphi_\omega}
\nonumber\\
&\hskip 26mm
=\menge{(\omega,\mathsf{x},\xi)\in
\Omega_n\times\boldsymbol{\mathsf{C}}_n}{
(\mathsf{x},\xi)\in K_n(\omega)}
\nonumber\\
&\hskip 26mm
\in\FF_n\otimes\BE_{\boldsymbol{\mathsf{C}}_n}
\nonumber\\
&\hskip 26mm
\subset\FF\otimes\BE_{\XS\times\RR},
\end{align}
we get from \eqref{e:s03} that
\begin{equation}
\boldsymbol{G}
=\bigcup_{n\in\mathbb{K}}\menge{(\omega,\mathsf{x},\xi)\in
\Omega_n\times\boldsymbol{\mathsf{C}}_n}{
(\mathsf{x},\xi)\in\boldsymbol{\mathsf{C}}_n\cap\epi\varphi_\omega}
\in\FF\otimes\BE_{\XS\times\RR}.
\end{equation}
Thus, in the light of \eqref{e:s04}, $\varphi$ is
$\FF\otimes\BE_\XS$-measurable.
Next, using \eqref{e:ky}, we construct a family
$(\Theta_n)_{n\in\mathbb{K}}$ of pairwise disjoint sets in $\FF$
such that
\begin{equation}
\label{e:yf}
\Theta_{\min\mathbb{K}}=\Omega_{\min\mathbb{K}},
\quad
\bigcup_{n\in\mathbb{K}}\Theta_n=\Omega,
\quad\text{and}\quad
(\forall n\in\mathbb{K})\;\;\Theta_n\subset\Omega_n.
\end{equation}
In turn, for every $\omega\in\Omega$, there exists a unique
$\ell_\omega\in\mathbb{K}$ such that
$\omega\in\Theta_{\ell_\omega}$. Therefore, appealing to
\eqref{e:yf}, the mapping
\begin{equation}
\label{e:8uy}
\boldsymbol{y}\colon\Omega\to\XS\times\RR\colon
\omega\mapsto\boldsymbol{y}_{\ell_\omega}(\omega)
\end{equation}
is well defined and, in view of \eqref{e:tp},
\begin{equation}
\label{e:z0}
(\forall\omega\in\Omega)\quad
\boldsymbol{y}(\omega)
=\boldsymbol{y}_{\ell_\omega}(\omega)
\in K_{\ell_\omega}(\omega)
\subset
\epi\varphi_\omega.
\end{equation}
Let $\boldsymbol{\mathsf{V}}\in
\EuScript{T}_\XS\boxtimes\EuScript{T}_\RR$.
Then, for every $n\in\mathbb{K}$,
$\boldsymbol{\mathsf{V}}\cap\boldsymbol{\mathsf{C}}_n$ is
$\EuScript{T}_{\boldsymbol{\mathsf{C}}_n}$-open
and thus the measurability of
$\boldsymbol{y}_n\colon(\Omega_n,\FF_n)\to
(\boldsymbol{\mathsf{C}}_n,\BE_{\boldsymbol{\mathsf{C}}_n})$ and
\eqref{e:0f} ensure that $\boldsymbol{y}_n^{-1}(
\boldsymbol{\mathsf{V}}\cap\boldsymbol{\mathsf{C}}_n)
\in\FF_n\subset\FF$. Hence, we infer from \eqref{e:yf},
\eqref{e:8uy}, and \eqref{e:tp} that
\begin{align}
\boldsymbol{y}^{-1}(\boldsymbol{\mathsf{V}})
&=\bigcup_{n\in\mathbb{K}}\menge{\omega\in\Theta_n}{
\boldsymbol{y}(\omega)\in\boldsymbol{\mathsf{V}}}
\nonumber\\
&=\bigcup_{n\in\mathbb{K}}\menge{\omega\in\Theta_n}{
\boldsymbol{y}_n(\omega)\in\boldsymbol{\mathsf{C}}_n\cap
\boldsymbol{\mathsf{V}}}
\nonumber\\
&=\bigcup_{n\in\mathbb{K}}\big(\Theta_n\cap
\boldsymbol{y}_n^{-1}(
\boldsymbol{\mathsf{C}}_n\cap\boldsymbol{\mathsf{V}})\big)
\nonumber\\
&\in\FF.
\end{align}
This verifies that $\boldsymbol{y}\colon(\Omega,\FF)\to
(\XS\times\RR,\BE_{\XS\times\RR})$ is measurable.
We now define
\begin{equation}
\label{e:xcd}
(\forall n\in\mathbb{K})(\forall k\in\NN)\quad
\boldsymbol{x}_{n,k}\colon\Omega\to\XS\times\RR\colon
\omega\mapsto
\begin{cases}
\boldsymbol{z}_{n,k}(\omega),&\text{if}\,\,\omega\in\Omega_n;\\
\boldsymbol{y}(\omega),
&\text{if}\,\,\omega\in\complement\Omega_n.
\end{cases}
\end{equation}
It results from \eqref{e:0f} that
$(\boldsymbol{x}_{n,k})_{n\in\mathbb{K},k\in\NN}$
are measurable mappings from
$(\Omega,\FF)$ to $(\XS\times\RR,\BE_{\XS\times\RR})$.
Furthermore, \eqref{e:tp} and \eqref{e:z0} give
\begin{equation}
\label{e:sd9}
(\forall n\in\mathbb{K})(\forall k\in\NN)
(\forall\omega\in\Omega)\quad
\boldsymbol{x}_{n,k}(\omega)\in\epi\varphi_\omega.
\end{equation}
Fix $\omega\in\Omega$ and let
$\boldsymbol{\mathsf{x}}\in\epi\varphi_\omega$.
Since $\bigcup_{n\in\mathbb{K}}(\boldsymbol{\mathsf{C}}_n\cap
\epi\varphi_\omega)=\epi\varphi_\omega$,
there exists $N\in\mathbb{K}$ such that
$\omega\in\Omega_N$ and
$\boldsymbol{\mathsf{x}}\in\boldsymbol{\mathsf{C}}_N
\cap\epi\varphi_\omega=K_N(\omega)$.
Thus, it results from \eqref{e:tp} and \eqref{e:xcd} that 
\begin{equation}
\boldsymbol{\mathsf{x}}
\in\overline{\big\{\boldsymbol{z}_{N,k}(\omega)\big\}_{k\in\NN}}
=\overline{\big\{\boldsymbol{x}_{N,k}(\omega)\big\}_{k\in\NN}}
\subset\overline{\big\{\boldsymbol{x}_{n,k}(\omega)\big\}_{
n\in\mathbb{K},k\in\NN}}.
\end{equation}
Therefore, since $\epi\varphi_\omega$ is closed,
it follows from \eqref{e:sd9} and \cite[Section~I.3.1]{Bour71}
that
\begin{equation}
\epi\varphi_\omega
=\overline{\big\{\boldsymbol{x}_{n,k}(\omega)\big\}_{
n\in\mathbb{K},k\in\NN}}.
\end{equation}
At the same time, for every $n\in\mathbb{K}$ and every $k\in\NN$,
since $\BE_{\XS\times\RR}=\BE_\XS\otimes\BE_\RR$
\cite[Lemma~6.4.2(i)]{Boga07} and since
$\boldsymbol{x}_{n,k}\colon(\Omega,\FF)\to
(\XS\times\RR,\BE_{\XS\times\RR})$ is measurable,
there exist $x_{n,k}\in\mathcal{L}(\Omega;\XS)$ and
$\varrho_{n,k}\in\mathcal{L}(\Omega;\RR)$ such that
$(\forall\omega\in\Omega)$
$\boldsymbol{x}_{n,k}(\omega)
=(x_{n,k}(\omega),\varrho_{n,k}(\omega))$.
Altogether, $\varphi$ is normal.

\ref{t:3ivb}$\Rightarrow$\ref{t:3iva}:
Let $\boldsymbol{\mathsf{C}}$ be a nonempty closed subset of
$(\XS\times\RR,\EuScript{T}_\XS\boxtimes\EuScript{T}_\RR)$.
Note that the lower semicontinuity of $\varphi$
ensures that $\boldsymbol{G}$ is closed.
For every $n\in\NN$, since
$\boldsymbol{G}\cap(\Omega\times(\boldsymbol{\mathsf{C}}\cap
\boldsymbol{\mathsf{C}}_n))$ is closed in
$(\Omega\times\boldsymbol{\mathsf{C}}_n,
\EuScript{T}_\Omega\boxtimes
\EuScript{T}_{\boldsymbol{\mathsf{C}}_n})$,
it follows from \eqref{e:q0x} and
\cite[Corollaire~I.10.5 and Th\'eor\`eme~I.10.1]{Bour71} that
$Q_n(\boldsymbol{G}\cap(\Omega\times(\boldsymbol{\mathsf{C}}\cap
\boldsymbol{\mathsf{C}}_n)))$ is closed in
$(\Omega,\EuScript{T}_\Omega)$
and, therefore, that it belongs to $\BE_\Omega=\FF$.
Thus, by
\eqref{e:rz2}, $\menge{\omega\in\Omega}{\boldsymbol{\mathsf{C}}
\cap\epi\varphi_\omega\neq\emp}\in\FF$.

\ref{t:3ivc}$\Rightarrow$\ref{t:3iva}:
There exists a topology $\widetilde{\EuScript{T}_\Omega}$ on
$\Omega$ such that
\begin{equation}
\EuScript{T}_\Omega\subset\widetilde{\EuScript{T}_\Omega}
\,\,\text{and}\,\,
\big(\Omega,\widetilde{\EuScript{T}_\Omega}\big)\,\,
\text{is a Polish space}.
\end{equation}
In addition, by \cite[Corollary~2, p.~101]{Schw73},
the Borel $\sigma$-algebra of
$(\Omega,\widetilde{\EuScript{T}_\Omega})$ is $\BE_\Omega=\FF$.
Let $\boldsymbol{\mathsf{C}}$ be a closed subset of
$(\XS\times\RR,\EuScript{T}_\XS\boxtimes\EuScript{T}_\RR)$ and
fix temporarily $n\in\NN$. Since the
$\FF\otimes\BE_\XS$-measurability of $\varphi$
and \eqref{e:s04} ensure that
$\boldsymbol{G}\in\FF\otimes\BE_{\XS\times\RR}$, we have
$\boldsymbol{G}\cap(\Omega\times(\boldsymbol{\mathsf{C}}\cap
\boldsymbol{\mathsf{C}}_n))
=\boldsymbol{G}\cap(\Omega\times\boldsymbol{\mathsf{C}})\cap
(\Omega\times\boldsymbol{\mathsf{C}}_n)
\in\BE_{\Omega\times\boldsymbol{\mathsf{C}}_n}$.
At the same time, for every $\omega\in\Omega$,
\begin{align}
&\menge{(\mathsf{x},\xi)\in\XS\times\RR}{
(\omega,\mathsf{x},\xi)\in
\boldsymbol{G}\cap\big(\Omega\times(\boldsymbol{\mathsf{C}}\cap
\boldsymbol{\mathsf{C}}_n)\big)}
\nonumber\\
&\hskip 26mm
=\menge{(\mathsf{x},\xi)\in\XS\times\RR}{(\mathsf{x},\xi)\in
\boldsymbol{\mathsf{C}}\cap\boldsymbol{\mathsf{C}}_n
\,\,\text{and}\,\,
(\mathsf{x},\xi)\in\epi\varphi_\omega},
\nonumber\\
&\hskip 26mm
=\boldsymbol{\mathsf{C}}\cap\boldsymbol{\mathsf{C}}_n
\cap\epi\varphi_\omega
\end{align}
is a closed subset of the compact space
$(\boldsymbol{\mathsf{C}}_n,
\EuScript{T}_{\boldsymbol{\mathsf{C}}_n})$.
In turn, since
$(\Omega,\widetilde{\EuScript{T}_\Omega})$ and
$(\boldsymbol{\mathsf{C}}_n,
\EuScript{T}_{\boldsymbol{\mathsf{C}}_n})$
are Polish spaces, \cite[Theorem~1]{Brow73} guarantees that
$Q_n(\boldsymbol{G}\cap(\Omega\times(\boldsymbol{\mathsf{C}}\cap
\boldsymbol{\mathsf{C}}_n)))\in\BE_\Omega=\FF$.
Consequently, we infer from \eqref{e:rz2} that
$\menge{\omega\in\Omega}{\boldsymbol{\mathsf{C}}
\cap\epi\varphi_\omega\neq\emp}\in\FF$.

\ref{t:3v}:
Let $(\YS,\EuScript{T}_\YS)$ be the strong dual of $\XS$.
Then $(\YS,\EuScript{T}_\YS)$ is a separable reflexive Banach
space. Consequently, \ref{t:3va} follows from \ref{t:3ivb}, and
\ref{t:3vb} follows from \ref{t:3ivc}.

\ref{t:3vi}$\Rightarrow$\ref{t:3vb}:
Let $\EuScript{T}_\Omega$ be the topology on $\Omega$ induced by
the standard topology on $\RR^M$. By
\cite[Corollary~1, p.~102]{Schw73},
$(\Omega,\EuScript{T}_\Omega)$ is a Lusin space.

\ref{t:3viia}:
The lower semicontinuity of $(\varphi_\omega)_{\omega\in\Omega}$
ensures that the sets $(\epi\varphi_\omega)_{\omega\in\Omega}$ are
closed. Hence, since $(\XS\times\RR,
\EuScript{T}_\XS\boxtimes\EuScript{T}_\RR)$ is a Polish
space, \cite[Theorem~3.5(i)]{Himm75} and \eqref{e:s03} yield
$\boldsymbol{G}\in\FF\otimes\BE_{\XS\times\RR}$. Therefore, by
\eqref{e:s04}, $\varphi$ is $\FF\otimes\BE_\XS$-measurable.
Consequently, we deduce the assertion from
\cite[Theorem~5.6]{Himm75}.

\ref{t:3viib}$\Rightarrow$\ref{t:3viia}:
This follows from \cite[Theorem~3.2(ii)]{Himm75}.

\ref{t:3viii}:
The $\BE_\XS$-measurability of $\mathsf{f}$ implies that
$\varphi$ is $\FF\otimes\BE_\XS$-measurable. At the same time,
since $(\XS\times\RR,
\EuScript{T}_\XS\boxtimes\EuScript{T}_\RR)$ is a Souslin space, we
deduce from \cite[Proposition~II.0]{Schw73} that there exists a
sequence
$\{(\mathsf{x}_n,\xi_n)\}_{n\in\NN}$ in $\epi\mathsf{f}$ such that
$\overline{\{(\mathsf{x}_n,\xi_n)\}_{n\in\NN}}
=\overline{\epi\mathsf{f}}$. Altogether, upon setting
\begin{equation}
(\forall n\in\NN)\quad
x_n\colon\Omega\to\XS\colon\omega\mapsto\mathsf{x}_n
\quad\text{and}\quad
\varrho_n\colon\Omega\to\RR\colon\omega\mapsto\xi_n,
\end{equation}
we conclude that $\varphi$ is normal.
\end{proof}

\begin{remark}
\label{r:7}
Here are a few observations about Definition~\ref{d:n}.
\begin{enumerate}
\item
\label{r:7i}
The setting of Theorem~\ref{t:3}\ref{t:3viib} corresponds to
the definition of normality in \cite{Roc76k}.
\item
\label{r:7ii}
The setting of Theorem~\ref{t:3}\ref{t:3ia} corresponds to
the definition of normality in \cite{Vala75}, which itself
contains that of \cite{Rock71}.
\item
\label{r:7iii}
The frameworks of \ref{r:7i} and \ref{r:7ii} above are distinct since the
former does not require that $(\Omega,\FF,\mu)$ be complete.
Definition~\ref{d:n} unifies them and, as seen in Theorem~\ref{t:3},
goes beyond. For the importance of noncompleteness in applications, see
for instance \cite{Rans90} and \cite[p.~649]{Rock09}.
\end{enumerate}
\end{remark}

\section{Interchange rules with compliant spaces and normal
integrands}
\label{sec:5}

The main result of this section is the following interchange
theorem, which brings together the abstract principle of
Theorem~\ref{t:1}, the notion of compliance of
Definition~\ref{d:1}, and the notion of normality of
Definition~\ref{d:n}.

\begin{theorem}
\label{t:8}
Suppose that Assumption~\ref{a:1} holds, that $\XX$ is compliant,
and that $\varphi$ is normal. Then 
\begin{equation}
\label{e:311}
\inf_{x\in\XX}\int_\Omega\varphi\big(\omega,x(\omega)\big)
\mu(d\omega)=
\int_\Omega\inf_{\mathsf{x}\in\XS}\varphi(\omega,\mathsf{x})\,
\mu(d\omega).
\end{equation}
\end{theorem}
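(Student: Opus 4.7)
The plan is to reduce Theorem~\ref{t:8} to the interchange principle (Theorem~\ref{t:1}) by verifying items~\ref{t:1i} and \ref{t:1ii} there. The raw material is furnished by normality: Definition~\ref{d:n} yields sequences $(x_n)_{n\in\NN}$ in $\mathcal{L}(\Omega;\XS)$ and $(\varrho_n)_{n\in\NN}$ in $\mathcal{L}(\Omega;\RR)$ with
\[
\big\{\big(x_n(\omega),\varrho_n(\omega)\big)\big\}_{n\in\NN}
\subset\epi\varphi_\omega,
\quad
\overline{\epi\varphi_\omega}=
\overline{\big\{\big(x_n(\omega),\varrho_n(\omega)\big)\big\}_{n\in\NN}}
\quad(\forall\omega\in\Omega).
\]
I would first establish the pointwise identity
\[
\inf\varphi_\omega(\XS)
=\inf_{n\in\NN}\varphi_\omega\big(x_n(\omega)\big)
=\inf_{n\in\NN}\varrho_n(\omega)
\quad(\forall\omega\in\Omega).
\]
The bound $\inf\varphi_\omega(\XS)\leq\inf_n\varphi_\omega(x_n(\omega))\leq\inf_n\varrho_n(\omega)$ is immediate from $(x_n(\omega),\varrho_n(\omega))\in\epi\varphi_\omega$. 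For the reverse, I would project the density clause onto the second coordinate via the continuous map $(\mathsf{x},\xi)\mapsto\xi$: the image of $\overline{\epi\varphi_\omega}$ then sits inside the $\RR$-closure of $\{\varrho_n(\omega)\}_{n\in\NN}$, while the image of $\epi\varphi_\omega$ itself contains $(\inf\varphi_\omega(\XS),\pinf)\cap\RR$. Consequently every real number strictly above $\inf\varphi_\omega(\XS)$ belongs to $\overline{\{\varrho_n(\omega)\}_{n\in\NN}}$, which forces $\inf_n\varrho_n(\omega)\leq\inf\varphi_\omega(\XS)$. A direct byproduct is $\inf\varphi(\Cdot,\XS)=\inf_n\varrho_n\in\mathcal{L}(\Omega;\RXX)$, which is exactly item~\ref{t:1i}.

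To match the translated form of \ref{t:1iia}, I would feed Theorem~\ref{t:1} the shifted sequence $\widetilde{x}_n=x_n-\overline{x}$; by Assumption~\ref{a:1}\ref{a:1b}--\ref{a:1c} together with $\overline{x}\in\XX\subset\mathcal{L}(\Omega;\XS)$, each $\widetilde{x}_n$ lies in $\mathcal{L}(\Omega;\XS)$, and since $\widetilde{x}_n+\overline{x}=x_n$ the pointwise identity above transfers verbatim. For item~\ref{t:1iib}, $\sigma$-finiteness (Assumption~\ref{a:1}\ref{a:1d}) supplies an increasing cover $(\Omega_k)_{k\in\NN}$ of $\Omega$ by finite-measure sets in $\FF$, and compliance of $\XX$ (Definition~\ref{d:1}) yields $1_A\widetilde{x}_n\in\XX$ whenever $A\in\FF$, $A\subset\Omega_k$, and $\overline{\widetilde{x}_n(A)}$ is compact. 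An appeal to Theorem~\ref{t:1} then produces \eqref{e:311}.

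The main obstacle is the closure step inside the Souslin product $\XS\times\RR$, which is not assumed metrizable, so the density equality in Definition~\ref{d:n} does not a priori deliver any sequential approximation. The remedy is precisely the projection onto $\RR$: it transfers the infimum bound to a first-countable target, where the classical sequential description of closure is enough to close the argument.
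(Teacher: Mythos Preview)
Your proposal is correct and follows the paper's proof essentially step for step: both verify the hypotheses of Theorem~\ref{t:1} by extracting from normality the pointwise identity $\inf\varphi_\omega(\XS)=\inf_{n}\varphi_\omega(x_n(\omega))$, shifting by $\overline{x}$ to obtain \ref{t:1iia}, and invoking compliance together with $\sigma$-finiteness for \ref{t:1iib}. The only cosmetic difference is that the paper establishes the infimum identity via a subnet in $\XS\times\RR$ converging to a chosen point of $\epi\varphi_\omega$, whereas you push the closure relation through the continuous projection $(\mathsf{x},\xi)\mapsto\xi$ and argue directly in $\RR$; the two arguments are equivalent.
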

\begin{proof}
We apply Theorem~\ref{t:1}.
By virtue of the normality of $\varphi$, per Definition~\ref{d:n},
we choose sequences $(z_n)_{n\in\NN}$ in
$\mathcal{L}(\Omega;\XS)$ and $(\vartheta_n)_{n\in\NN}$ in
$\mathcal{L}(\Omega;\RR)$ such that
\begin{equation}
\label{e:n0d}
(\forall\omega\in\Omega)\quad
\big\{\big(z_n(\omega),\vartheta_n(\omega)\big)\big\}_{n\in\NN}
\subset\epi\varphi_\omega
\quad\text{and}\quad
\overline{\epi\varphi_\omega}=\overline{\big\{\big(z_n(\omega),
\vartheta_n(\omega)\big)\big\}_{n\in\NN}}.
\end{equation}
On the other hand, Assumption~\ref{a:1}\ref{a:1f} ensures that
$(\forall\omega\in\Omega)$ $\inf\varphi(\omega,\XS)<\pinf$.
Now fix $\omega\in\Omega$ and let
$\xi\in\left]\inf\varphi(\omega,\XS),\pinf\right[$.
Then there exits $\mathsf{x}\in\XS$ such that
$(\mathsf{x},\xi)\in\epi\varphi_\omega$. Thus,
in view of \eqref{e:n0d}, we obtain a subnet
$(\vartheta_{k(b)}(\omega))_{b\in B}$ of
$(\vartheta_n(\omega))_{n\in\NN}$ such that
$\vartheta_{k(b)}(\omega)\to\xi$. On the other hand,
\begin{equation}
(\forall b\in B)\quad
\inf\varphi(\omega,\XS)
\leq\inf_{n\in\NN}\varphi\big(\omega,z_n(\omega)\big)
\leq\varphi\big(\omega,z_{k(b)}(\omega)\big)
\leq\vartheta_{k(b)}(\omega).
\end{equation}
Hence $\inf\varphi(\omega,\XS)
\leq\inf_{n\in\NN}\varphi(\omega,z_n(\omega))
\leq\xi$. In turn,
letting $\xi\downarrow\inf\varphi(\omega,\XS)$ yields
$\inf\varphi(\omega,\XS)=
\inf_{n\in\NN}\varphi(\omega,z_n(\omega))$.
Therefore, property~\ref{t:1iia} in Theorem~\ref{t:1}
is satisfied with $(\forall n\in\NN)$ $x_n=z_n-\overline{x}$.
At the same time, property~\ref{t:1iib} in Theorem~\ref{t:1}
follows from Assumption~\ref{a:1}\ref{a:1d} and the compliance of
$\XX$. Finally, since the functions
$(\varphi(\Cdot,z_n(\Cdot)))_{n\in\NN}$ are $\FF$-measurable by
Assumption~\ref{a:1}\ref{a:1f}, so is
$\inf_{n\in\NN}\varphi(\Cdot,z_n(\Cdot))=\inf\varphi(\Cdot,\XS)$.
\end{proof}

In the remainder of this section, we construct new scenarios for
the validity of the interchange rule as instantiations of
Theorem~\ref{t:8}.

\begin{example}
\label{ex:1}
Let $\XS$ be a separable real Banach space with strong topology
$\EuScript{T}_\XS$, let $(\Omega,\FF,\mu)$ be a $\sigma$-finite
measure space such that $\mu(\Omega)\neq 0$, let $\XX$ be a vector
subspace of $\mathcal{L}(\Omega;\XS)$, and let
$\varphi\colon(\Omega\times\XS,\FF\otimes\BE_\XS)\to\RXX$ be
measurable. Suppose that the following are satisfied:
\begin{enumerate}
\item
\label{ex:1i}
For every $A\in\FF$ such that $\mu(A)<\pinf$ and every
$z\in\mathcal{L}^\infty(\Omega;\XS)$, $1_Az\in\XX$.
\item
\label{ex:1ii}
$\varphi$ is normal.
\item
\label{ex:1iii}
There exists $\overline{x}\in\XX$ such that
$\int_\Omega\max\{\varphi(\Cdot,\overline{x}(\Cdot)),0\}
d\mu<\pinf$.
\end{enumerate}
Then the interchange rule \eqref{e:311} holds.
\end{example}
\begin{proof}
Note that Assumption~\ref{a:1} is satisfied. Hence, the assertion
follows from Proposition~\ref{p:10}\ref{p:10ii} and
Theorem~\ref{t:8}.
\end{proof}

\begin{example}
\label{ex:2}
Suppose that Assumption~\ref{a:1} holds, that $(\Omega,\FF,\mu)$
is complete, and that $\XX$ is compliant. Then the interchange
rule \eqref{e:311} holds.
\end{example}
\begin{proof}
Combine Theorem~\ref{t:3}\ref{t:3ia} and Theorem~\ref{t:8}.
\end{proof}

When specialized to probability in separable Banach spaces,
Theorem~\ref{t:8} yields conditions for the interchange of
infimization and expectation. Here is an illustration.

\begin{example}
\label{ex:4}
Let $\XS$ be a separable real Banach space,
let $(\Omega,\FF,\PP)$ be a probability space,
let $\XX$ be a vector subspace of
$\mathcal{L}(\Omega;\XS)$ which contains
$\mathcal{L}^\infty(\Omega;\XS)$,
and let $\varphi\colon(\Omega\times\XS,\FF\otimes\BE_\XS)\to\RXX$
be normal. In addition, set $\phi=\inf\varphi(\Cdot,\XS)$ and
$\Phi\colon\mathcal{L}(\Omega;\XS)
\to\mathcal{L}(\Omega;\RXX)\colon
x\mapsto\varphi(\Cdot,x(\Cdot))$, and suppose that there
exists $\overline{x}\in\XX$ such that
$\EE\max\{\Phi(\overline{x}),0\}<\pinf$. Then
\begin{equation}
\inf_{x\in\XX}\EE\Phi(x)=\EE\phi.
\end{equation}
\end{example}
\begin{proof}
This is a special case of Example~\ref{ex:1}.
\end{proof}

\begin{example}
\label{ex:9}
Suppose that Assumption~\ref{a:1} holds, that $\XX$ is compliant,
and that the functions $(\varphi_\omega)_{\omega\in\Omega}$ are
upper semicontinuous. Then the interchange rule \eqref{e:311}
holds.
\end{example}
\begin{proof}
We deduce from Assumption~\ref{a:1}\ref{a:1f} and
Theorem~\ref{t:3}\ref{t:3id} that $\varphi$ is normal. Thus, the
conclusion follows from Theorem~\ref{t:8}.
\end{proof}

An important realization of Example~\ref{ex:9} is the case of
Carath\'eodory integrands.

\begin{example}[Carath\'eodory integrand]
\label{ex:5}
Let $(\XS,\EuScript{T}_\XS)$ be a Souslin topological vector
space, let $(\Omega,\FF,\mu)$ be a $\sigma$-finite measure space
such that $\mu(\Omega)\neq 0$, let $\XX$ be a compliant vector
subspace of $\mathcal{L}(\Omega;\XS)$, and
let $\varphi\colon\Omega\times\XS\to\RXX$ be
a Carath\'eodory integrand in the sense that,
for every $(\omega,\mathsf{x})\in\Omega\times\XS$,
$\varphi(\omega,\Cdot)$ is continuous with
$\epi\varphi_\omega\neq\emp$, and
$\varphi(\Cdot,\mathsf{x})$ is $\FF$-measurable. Suppose that
there exists $\overline{x}\in\XX$
such that $\int_\Omega\max\{\varphi(\Cdot,\overline{x}(\Cdot)),0\}
d\mu<\pinf$. Then the interchange rule \eqref{e:311} holds.
\end{example}
\begin{proof}
Since $(\XS,\EuScript{T}_\XS)$ is a Souslin topological vector
space, \cite[Section~35F,~p.~244]{Will70} implies that
it is a regular Souslin space. Thus, we
deduce from Theorem~\ref{t:3}\ref{t:3iii} that $\varphi$ is
normal and, in particular, it is $\FF\otimes\BE_\XS$-measurable.
Hence, Assumption~\ref{a:1} is satisfied. Consequently,
Example~\ref{ex:9} yields the conclusion.
\end{proof}

\begin{remark}
\label{r:3}
Here are connections with existing work.
\begin{enumerate}
\item
\label{r:3i}
Example~\ref{ex:1} unifies and extends the classical results of
\cite{Hiai77,Rock71,Roc76k}:
\begin{itemize}
\item
It captures \cite[Theorem~3A]{Roc76k}, where $\XS$ is a Euclidean
space and $\XX$ is assumed to be Rockafellar-decomposable
(see Proposition~\ref{p:10}\ref{p:10iv} for definition).
\item
It covers the setting of \cite{Rock71}, where
$(\Omega,\FF\,\mu)$ is assumed to be complete
and where \ref{ex:1i} and \ref{ex:1ii} in Example~\ref{ex:1}
are specialized to:
\begin{enumerate}[label={\rm(\roman*')}]
\setcounter{enumi}{1}
\item
\label{r:3i+}
$\XX$ is Rockafellar-decomposable.
\item
\label{r:3ii+}
The functions $(\varphi_\omega)_{\omega\in\Omega}$ are lower
semicontinuous.
\end{enumerate}
The fact that property~\ref{ex:1ii} in Example~\ref{ex:1} is
satisfied when $(\Omega,\FF,\mu)$ is complete is shown in
Theorem~\ref{t:3}\ref{t:3ia}.
\item
It captures \cite[Theorem~2.2]{Hiai77}, where
$\XX=\menge{x\in\mathcal{L}(\Omega;\XS)}{
\int_\Omega\|x(\omega)\|_\XS^p\,\mu(d\omega)<\pinf}$
with $p\in\left[1,\pinf\right[$.
\end{itemize}
\item
An important contribution of Theorem~\ref{t:8} and, in particular,
of Example~\ref{ex:1} is that completeness of the measure space
$(\Omega,\FF,\mu)$ is not required.
\item
In the special case when $\XS$ is a
Banach space, an alternative framework that recovers the
interchange rules of \cite{Hiai77,Rock71,Roc76k} was proposed in
\cite[Theorem~6.1]{Gine09}, where the right-hand side of
\eqref{e:1} is replaced by the integral of an abstract essential
infimum. However, \cite{Gine09} does not provide new scenarios for
\eqref{e:1} beyond the known cases in Banach spaces.
An interpretation of the framework of \cite{Gine09}
from the view point of monotone relations between partially
ordered sets is proposed in \cite{Chan22}.
\item
Example~\ref{ex:2} captures \cite[Theorem~4]{Perk18}, where
$\mu(\Omega)<\pinf$ and $\XX$ is Valadier-decomposable
(see Proposition~\ref{p:10}\ref{p:10v} for definition).
It also covers the setting of \cite{Vala75}, where $\XS$ is a
Souslin topological vector space and $\XX$ is
Valadier-decomposable.
\item
Example~\ref{ex:4} contains the interchange rule of
\cite{Penn23,Shap21}, where $\XS$ is the standard Euclidean space
$\RR^N$ and $\XX$ is Rockafellar-decomposable.
\item
Example~\ref{ex:5} extends
\cite[Theorem~3A]{Roc76k}, where $\XS$ is the standard Euclidean
space $\RR^N$ and $\XX$ is Rockafellar-decomposable.
\end{enumerate}
\end{remark}

\section{Interchanging convex-analytical operations and
integration}
\label{sec:6}

We put the interchange principle of Theorem~\ref{t:1}, compliance,
and normality in action to evaluate convex-analytical objects
associated with integral functions, namely conjugate functions,
subdifferential operators, recession functions, Moreau envelopes,
and proximity operators. This analysis results in new interchange
rules for the convex calculus of integral functions. Throughout
this section, we adopt the following notation.

\begin{notation}
\label{n:1}
Let $(\XS,\EuScript{T}_\XS)$ be a real topological vector space,
let $(\Omega,\FF,\mu)$ be a $\sigma$-finite measure space such
that $\mu(\Omega)\neq 0$, let $\XX$ be a vector subspace of
$\mathcal{L}(\Omega;\XS)$, and let
$\varphi\colon(\Omega\times\XS,\FF\otimes\BE_\XS)\to\RXX$ be an
integrand. Then:
\begin{enumerate}
\item
$\TXX$ is the vector space of equivalence classes of $\mae$ equal
mappings in $\XX$.
\item
The equivalence class in $\TXX$ of $x\in\XX$ is denoted by
$\widetilde{x}$. Conversely, an arbitrary representative in $\XX$
of $\widetilde{x}\in\TXX$ is denoted by $x$.
\item
$\mathfrak{I}_{\varphi,\TXX}\colon\TXX\to\RXX\colon
\widetilde{x}\mapsto
\int_\Omega\varphi(\omega,x(\omega))\mu(d\omega)$.
\end{enumerate}
\end{notation}

We shall require the following result. Its item \ref{l:6i} 
appears in \cite[Lemma~4]{Vala75} in the special case when 
$(\Omega,\FF,\mu)$ is complete.

\begin{lemma}
\label{l:6}
Let $(\Omega,\FF,\mu)$ be a $\sigma$-finite measure space
such that $\mu(\Omega)\neq 0$,
let $(\XS,\EuScript{T}_\XS)$ be a Souslin locally convex real
topological vector space, and let $(\YS,\EuScript{T}_\YS)$ be a
separable locally convex real topological vector space.
Suppose that $\XS$ and $\YS$ are placed in separating duality via
a bilinear form
$\pair{\Cdot}{\Cdot}_{\XS,\YS}\colon\XS\times\YS\to\RR$ with which
$\EuScript{T}_\XS$ and $\EuScript{T}_\YS$ are compatible.
Then the following hold:
\begin{enumerate}
\item
\label{l:6i}
$\pair{\Cdot}{\Cdot}_{\XS,\YS}\colon(\XS\times\YS,
\BE_\XS\otimes\BE_\YS)\to\RR$ is measurable.
\item
\label{l:6ii}
Let $\XX\subset\mathcal{L}(\Omega;\XS)$ and
$\YY\subset\mathcal{L}(\Omega;\YS)$ be vector subspaces
such that the following are satisfied:
\begin{enumerate}
\item
\label{l:6iia}
$(\forall x\in\XX)(\forall y\in\YY)$
$\int_\Omega|\pair{x(\omega)}{y(\omega)}_{\XS,\YS}|
\mu(d\omega)<\pinf$.
\item
\label{l:6iib}
$\bigcup_{\mathsf{x}\in\XS}\menge{1_A\mathsf{x}}{A\in\FF\,\,
\text{and}\,\,\mu(A)<\pinf}\subset\XX$.
\item
\label{l:6iic}
$\bigcup_{\mathsf{y}\in\YS}\menge{1_A\mathsf{y}}{A\in\FF\,\,
\text{and}\,\,\mu(A)<\pinf}\subset\YY$.
\end{enumerate}
Then $\widetilde{\XX}$ and $\widetilde{\YY}$ are in
separating duality via the bilinear form $\pair{\Cdot}{\Cdot}$
defined by
\begin{equation}
\label{e:62}
(\forall\widetilde{x}\in\TXX)(\forall\widetilde{y}\in\TYY)\quad
\pair{\widetilde{x}}{\widetilde{y}}=
\int_\Omega\Pair{x(\omega)}{y(\omega)}_{\XS,\YS}\mu(d\omega).
\end{equation}
\end{enumerate}
\end{lemma}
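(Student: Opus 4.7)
The plan is to prove (i) first, so that the integrand in \eqref{e:62} is automatically $\FF$-measurable, and then to use conditions (b)--(c) together with density of countable sets to transfer the separation property of the underlying duality to the integral pairing.

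For (i), note that the pairing is separately continuous: for each $\mathsf{y}\in\YS$, $\pair{\Cdot}{\mathsf{y}}_{\XS,\YS}$ is $\EuScript{T}_\XS$-continuous by the compatibility assumption and hence $\BE_\XS$-measurable, while for each $\mathsf{x}\in\XS$, $\pair{\mathsf{x}}{\Cdot}_{\XS,\YS}$ is $\EuScript{T}_\YS$-continuous. To pass from separate continuity to joint $\BE_\XS\otimes\BE_\YS$-measurability, I would exploit the Souslin structure of $\XS$ as in the proof of Theorem~\ref{t:3}\ref{t:3iii}: there exists a finer metrizable Souslin topology $\widetilde{\EuScript{T}_\XS}$ on $\XS$ whose Borel $\sigma$-algebra is still $\BE_\XS$, and each $\pair{\Cdot}{\mathsf{y}}_{\XS,\YS}$ remains $\widetilde{\EuScript{T}_\XS}$-continuous. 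Picking a countable $\EuScript{T}_\YS$-dense sequence $(\mathsf{y}_n)_{n\in\NN}$ in $\YS$, a Carath\'eodory-type joint measurability result, applicable to separately continuous functions on the product of a separable metrizable space and a separable space, then yields the desired measurability.

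Granted (i), for every $x\in\XX$ and $y\in\YY$ the function $\omega\mapsto\pair{x(\omega)}{y(\omega)}_{\XS,\YS}$ is $\FF$-measurable by composition and integrable by (a), and routine arguments show invariance under $\mu$-null modifications, so \eqref{e:62} defines a bilinear form on $\TXX\times\TYY$. The crux is separation. Suppose $\widetilde{x}\in\TXX$ satisfies $\pair{\widetilde{x}}{\widetilde{y}}=0$ for every $\widetilde{y}\in\TYY$. For fixed $\mathsf{y}\in\YS$ and every $A\in\FF$ with $\mu(A)<\pinf$, (c) gives $1_A\mathsf{y}\in\YY$, so $\int_A\pair{x(\omega)}{\mathsf{y}}_{\XS,\YS}\mu(d\omega)=0$; varying $A$ and invoking $\sigma$-finiteness yields $\pair{x(\Cdot)}{\mathsf{y}}_{\XS,\YS}=0$ $\mae$ Fixing a countable $\EuScript{T}_\YS$-dense sequence $(\mathsf{y}_n)_{n\in\NN}$ in $\YS$ and unioning the associated $\mu$-null sets, we obtain a single $\mu$-null exceptional set off which $\pair{x(\omega)}{\mathsf{y}_n}_{\XS,\YS}=0$ for all $n$; the $\EuScript{T}_\YS$-continuity of $\pair{x(\omega)}{\Cdot}_{\XS,\YS}$ and density of $(\mathsf{y}_n)_{n\in\NN}$ then force $\pair{x(\omega)}{\mathsf{y}}_{\XS,\YS}=0$ for every $\mathsf{y}\in\YS$, and the separating property of the original duality concludes $x(\omega)=\mathsf{0}$ off that null set, i.e., $\widetilde{x}=0$. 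The reverse direction is symmetric: $\XS$ is separable as a Souslin space, (b) supplies the corresponding $1_A\mathsf{x}\in\XX$, and the same density-plus-continuity argument applies.

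The main obstacle is part (i): extracting joint $\BE_\XS\otimes\BE_\YS$-measurability of the pairing from mere separate continuity without assuming metrizability of $\YS$. The Souslin upgrade of $\EuScript{T}_\XS$ to a metrizable topology with the same Borel $\sigma$-algebra is what unlocks a standard Carath\'eodory-style joint measurability theorem. Once (i) is in hand, the separating argument in (ii) reduces to a clean exercise in density in $\YS$ (respectively $\XS$) combined with $\sigma$-finiteness of $\mu$.
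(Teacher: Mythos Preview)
Your proposal is correct and follows essentially the same approach as the paper: for \ref{l:6i} the paper invokes Theorem~\ref{t:3}\ref{t:3iii} directly (noting that a locally convex Souslin space is regular), which is precisely the Saint-Pierre metrizable-Souslin upgrade plus Carath\'eodory argument you describe; for \ref{l:6ii} the paper likewise tests against $1_A\mathsf{y}_n$ for a dense sequence $(\mathsf{y}_n)_{n\in\NN}$ in $\YS$, uses $\sigma$-finiteness to get $\pair{x(\Cdot)}{\mathsf{y}_n}_{\XS,\YS}=0$ \mae\ for each $n$, and concludes $\widetilde{x}=0$ by continuity and density, with the other direction handled symmetrically.
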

\begin{proof}
\ref{l:6i}:
We deduce from \cite[Section~35F,~p.~244]{Will70} that
$(\XS,\EuScript{T}_\XS)$ is a regular Souslin space.
On the other hand, since $\EuScript{T}_\YS$ and $\EuScript{T}_\XS$
are compatible with $\pair{\Cdot}{\Cdot}_{\XS,\YS}$,
the functions
$(\pair{\mathsf{x}}{\Cdot}_{\XS,\YS})_{\mathsf{x}\in\XS}$ are
$\BE_\YS$-measurable and the functions
$(\pair{\Cdot}{\mathsf{y}}_{\XS,\YS})_{\mathsf{y}\in\YS}$ are
continuous. Hence, Theorem~\ref{t:3}\ref{t:3iii}
implies that $\pair{\Cdot}{\Cdot}_{\XS,\YS}\colon(\XS\times\YS,
\BE_\XS\otimes\BE_\YS)\to\RR$ is measurable.

\ref{l:6ii}:
Note that \ref{l:6i} guarantees that, for every $x\in\XX$ and every
$y\in\YY$, $\pair{x(\Cdot)}{y(\Cdot)}_{\XS,\YS}$ is
$\FF$-measurable. Now let $\{\mathsf{y}_n\}_{n\in\NN}$ be a
dense subset of $(\YS,\EuScript{T}_\YS)$ and let
$\widetilde{x}\in\TXX$ be such
that $(\forall\widetilde{y}\in\TYY)$
$\pair{\widetilde{x}}{\widetilde{y}}=0$.
Then, for every $n\in\NN$ and every $A\in\FF$ such that
$\mu(A)<\pinf$, since \ref{l:6iic} ensures that
$1_A\mathsf{y}_n\in\YY$, we deduce from \eqref{e:62} that
$\int_A\pair{x(\omega)}{\mathsf{y}_n}_{\XS,\YS}\mu(d\omega)
=\int_\Omega\pair{x(\omega)}{1_A(\omega)\mathsf{y}_n}_{\XS,\YS}
\mu(d\omega)=0$. Therefore, since $(\Omega,\FF,\mu)$ is
$\sigma$-finite, it follows that $(\forall n\in\NN)$
$\pair{x(\Cdot)}{\mathsf{y}_n}_{\XS,\YS}=0$ $\mae$
Thus $\widetilde{x}=0$. Likewise,
$(\forall\widetilde{y}\in\TYY)$ $\pair{\Cdot}{\widetilde{y}}=0$
$\Rightarrow$ $\widetilde{y}=0$, which completes the proof.
\end{proof}

The main result of this section is set in the following
environment, which is well defined by virtue of Lemma~\ref{l:6}.

\begin{assumption}
\label{a:2}
\
\begin{enumerate}[label={\rm[\Alph*]}]
\item
\label{a:2a}
$(\XS,\EuScript{T}_\XS)$ is a Souslin locally convex real
topological vector space and $(\YS,\EuScript{T}_\YS)$ is a
separable locally convex real topological vector space.
In addition, $\XS$ and $\YS$ are placed in separating duality via
a bilinear form
$\pair{\Cdot}{\Cdot}_{\XS,\YS}\colon\XS\times\YS\to\RR$ with which
$\EuScript{T}_\XS$ and $\EuScript{T}_\YS$ are compatible.
\item
\label{a:2b}
$(\Omega,\FF,\mu)$ is a $\sigma$-finite measure space such that
$\mu(\Omega)\neq 0$.
\item
\label{a:2c}
$\XX\subset\mathcal{L}(\Omega;\XS)$ and
$\YY\subset\mathcal{L}(\Omega;\YS)$ are vector subspaces
such that $(\forall x\in\XX)(\forall y\in\YY)$
$\int_\Omega|\pair{x(\omega)}{y(\omega)}_{\XS,\YS}|
\mu(d\omega)<\pinf$. In addition,
\begin{equation}
\label{e:k7}
\XX\,\,\text{is compliant and}\,\,
\bigcup_{\mathsf{y}\in\YS}\menge{1_A\mathsf{y}}{A\in\FF\,\,
\text{and}\,\,\mu(A)<\pinf}\subset\YY.
\end{equation}
\item
\label{a:2d}
$\widetilde{\XX}$ and $\widetilde{\YY}$ are placed in separating
duality via the bilinear form $\pair{\Cdot}{\Cdot}$ defined by
\begin{equation}
\label{e:63}
(\forall\widetilde{x}\in\TXX)
(\forall\widetilde{y}\in\TYY)\quad
\pair{\widetilde{x}}{\widetilde{y}}=
\int_\Omega\Pair{x(\omega)}{y(\omega)}_{\XS,\YS}\mu(d\omega),
\end{equation}
and they are equipped with locally convex Hausdorff topologies
which are compatible with $\pair{\Cdot}{\Cdot}$.
\item
\label{a:2e}
$\varphi\colon(\Omega\times\XS,\FF\otimes\BE_\XS)\to\RX$ is normal
and we write
$\varphi^*\colon\Omega\times\YS\to\RXX\colon
(\omega,\mathsf{y})\mapsto\varphi_\omega^*(\mathsf{y})$.
\item
\label{a:2f}
$\dom\mathfrak{I}_{\varphi,\TXX}\neq\emp$.
\end{enumerate}
\end{assumption}

\begin{proposition}
\label{p:6}
Suppose that Assumption~\ref{a:2} holds. Then
$\varphi^*$ is $\FF\otimes\BE_\YS$-measurable.
\end{proposition}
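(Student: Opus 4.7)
\textbf{Proof plan for Proposition~\ref{p:6}.}
The plan is to rewrite $\varphi^{*}(\omega,\mathsf{y})$ as a countable supremum of jointly measurable functions of $(\omega,\mathsf{y})$, using normality of $\varphi$ to discretize the epigraph and Lemma~\ref{l:6}\ref{l:6i} to obtain joint measurability of the duality pairing.

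First I would unpack the definition: for each $\omega\in\Omega$,
\begin{equation}
\varphi^{*}(\omega,\mathsf{y})
=\sup_{\mathsf{x}\in\XS}\big(\pair{\mathsf{x}}{\mathsf{y}}_{\XS,\YS}-\varphi(\omega,\mathsf{x})\big)
=\sup_{(\mathsf{x},\xi)\in\epi\varphi_\omega}\big(\pair{\mathsf{x}}{\mathsf{y}}_{\XS,\YS}-\xi\big),
\end{equation}
the second identity being standard (the sup is not increased by enlarging $\xi$ beyond $\varphi_\omega(\mathsf{x})$). Since $\EuScript{T}_\XS$ is compatible with the pairing, $\pair{\Cdot}{\mathsf{y}}_{\XS,\YS}$ is $\EuScript{T}_\XS$-continuous for every $\mathsf{y}\in\YS$, hence the function $(\mathsf{x},\xi)\mapsto\pair{\mathsf{x}}{\mathsf{y}}_{\XS,\YS}-\xi$ is continuous on $(\XS\times\RR,\EuScript{T}_\XS\boxtimes\EuScript{T}_\RR)$. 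A continuous function on a topological space has the same supremum over any set and over its closure, so
\begin{equation}
\varphi^{*}(\omega,\mathsf{y})
=\sup_{(\mathsf{x},\xi)\in\overline{\epi\varphi_\omega}}\big(\pair{\mathsf{x}}{\mathsf{y}}_{\XS,\YS}-\xi\big).
\end{equation}

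Now I invoke normality of $\varphi$ (Assumption~\ref{a:2}\ref{a:2e}) to extract sequences $(x_n)_{n\in\NN}$ in $\mathcal{L}(\Omega;\XS)$ and $(\varrho_n)_{n\in\NN}$ in $\mathcal{L}(\Omega;\RR)$ such that, for every $\omega\in\Omega$, $\{(x_n(\omega),\varrho_n(\omega))\}_{n\in\NN}\subset\epi\varphi_\omega$ with closure $\overline{\epi\varphi_\omega}$. Combining this with the continuity argument above yields the pointwise representation
\begin{equation}
\label{e:p6goal}
(\forall(\omega,\mathsf{y})\in\Omega\times\YS)\quad
\varphi^{*}(\omega,\mathsf{y})
=\sup_{n\in\NN}\big(\pair{x_n(\omega)}{\mathsf{y}}_{\XS,\YS}-\varrho_n(\omega)\big).
\end{equation}

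It remains to check that for each $n\in\NN$ the map $h_n\colon(\omega,\mathsf{y})\mapsto\pair{x_n(\omega)}{\mathsf{y}}_{\XS,\YS}-\varrho_n(\omega)$ is $\FF\otimes\BE_\YS$-measurable, after which $\varphi^{*}=\sup_{n\in\NN}h_n$ is $\FF\otimes\BE_\YS$-measurable as a countable supremum. The map $(\omega,\mathsf{y})\mapsto(x_n(\omega),\mathsf{y})$ is measurable from $(\Omega\times\YS,\FF\otimes\BE_\YS)$ to $(\XS\times\YS,\BE_\XS\otimes\BE_\YS)$, since its components are; by Lemma~\ref{l:6}\ref{l:6i} the pairing $\pair{\Cdot}{\Cdot}_{\XS,\YS}\colon(\XS\times\YS,\BE_\XS\otimes\BE_\YS)\to\RR$ is measurable; and $(\omega,\mathsf{y})\mapsto\varrho_n(\omega)$ is clearly $\FF\otimes\BE_\YS$-measurable. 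Composing and subtracting gives the required measurability of $h_n$, which completes the proof.

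The only mildly subtle step is the passage from $\epi\varphi_\omega$ to its closure and thence to the countable dense sequence furnished by normality; once the continuity of $\pair{\Cdot}{\mathsf{y}}_{\XS,\YS}$ is recorded this is routine, and the main structural input is Lemma~\ref{l:6}\ref{l:6i}, which supplies the joint Borel measurability of the pairing that the Souslin/separability hypotheses of Assumption~\ref{a:2}\ref{a:2a} were designed to guarantee.
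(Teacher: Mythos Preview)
Your proof is correct and essentially identical to the paper's: both pass from the conjugate to a supremum over $\overline{\epi\varphi_\omega}$ via continuity of $\pair{\Cdot}{\mathsf{y}}_{\XS,\YS}$, use normality to replace this by a countable supremum over the sequence $(x_n(\omega),\varrho_n(\omega))$, and then invoke Lemma~\ref{l:6}\ref{l:6i} to verify that each term $(\omega,\mathsf{y})\mapsto\pair{x_n(\omega)}{\mathsf{y}}_{\XS,\YS}-\varrho_n(\omega)$ is $\FF\otimes\BE_\YS$-measurable.
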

\begin{proof}
According to Assumption~\ref{a:2}\ref{a:2e} and
Definition~\ref{d:n}, there exist sequences $(x_n)_{n\in\NN}$ in
$\mathcal{L}(\Omega;\XS)$ and $(\varrho_n)_{n\in\NN}$ in
$\mathcal{L}(\Omega;\RR)$ such that
\begin{equation}
\label{e:ef}
(\forall\omega\in\Omega)\quad
\big\{\big(x_n(\omega),\varrho_n(\omega)\big)\big\}_{n\in\NN}
\subset\epi\varphi_\omega
\quad\text{and}\quad
\overline{\epi\varphi_\omega}=\overline{\big\{\big(x_n(\omega),
\varrho_n(\omega)\big)\big\}_{n\in\NN}}.
\end{equation}
Set
\begin{equation}
(\forall n\in\NN)\quad
\psi_n\colon\Omega\times\YS\to\RR\colon(\omega,\mathsf{y})\mapsto
\pair{x_n(\omega)}{\mathsf{y}}_{\XS,\YS}-\varrho_n(\omega).
\end{equation}
Then, for every $n\in\NN$, 
Assumption~\ref{a:2}\ref{a:2a}--\ref{a:2c}
and Lemma~\ref{l:6}\ref{l:6i} ensure that
$\psi_n$ is $\FF\otimes\BE_\YS$-measurable. On the other hand,
since the functions
$(\pair{\Cdot}{\mathsf{y}}_{\XS,\YS})_{\mathsf{y}\in\YS}$ are
continuous, we derive from Assumption~\ref{a:2}\ref{a:2e},
\eqref{e:l0d}, and \eqref{e:ef} that
\begin{align}
\big(\forall(\omega,\mathsf{y})\in\Omega\times\YS\big)\quad
\varphi^*(\omega,\mathsf{y})
&=\sup_{(\mathsf{x},\xi)\in\epi\varphi_\omega}\big(
\pair{\mathsf{x}}{\mathsf{y}}_{\XS,\YS}-\xi\big)
\nonumber\\
&=\sup_{(\mathsf{x},\xi)\in\overline{\epi\varphi_\omega}}\big(
\pair{\mathsf{x}}{\mathsf{y}}_{\XS,\YS}-\xi\big)
\nonumber\\
&=\sup_{n\in\NN}\big(\pair{x_n(\omega)}{\mathsf{y}}_{\XS,\YS}
-\varrho_n(\omega)\big)
\nonumber\\
&=\sup_{n\in\NN}\psi_n(\omega,\mathsf{y}).
\end{align}
Thus $\varphi^*$ is $\FF\otimes\BE_\YS$-measurable.
\end{proof}

We first investigate the conjugate and the subdifferential
of integral functions.

\begin{theorem}
\label{t:2}
Suppose that Assumption~\ref{a:2} holds.
Then the following are satisfied:
\begin{enumerate}
\item
\label{t:2i}
$\mathfrak{I}_{\varphi,\TXX}^*=\mathfrak{I}_{\varphi^*,\TYY}$.
\item
\label{t:2ii}
Suppose that $\mathfrak{I}_{\varphi,\TXX}$ is proper,
let $\widetilde{x}\in\TXX$, and let $\widetilde{y}\in\TYY$. Then
$\widetilde{y}\in
\partial\mathfrak{I}_{\varphi,\TXX}(\widetilde{x})$
$\Leftrightarrow$ $y(\omega)\in\partial\varphi_\omega(x(\omega))$
for $\mu$-almost every $\omega\in\Omega$.
\end{enumerate}
\end{theorem}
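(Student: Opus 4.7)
The plan is to reduce part~\ref{t:2i} to Theorem~\ref{t:8} applied to a linearly sheared integrand, and then to deduce part~\ref{t:2ii} from~\ref{t:2i} through the Fenchel--Young extremality relation encoded in \eqref{e:s14}.

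For~\ref{t:2i}, fix $\widetilde{y}\in\TYY$ with representative $y\in\YY$ and consider the integrand
\begin{equation}
\psi\colon\Omega\times\XS\to\RXX\colon(\omega,\mathsf{x})\mapsto
\varphi(\omega,\mathsf{x})-\pair{\mathsf{x}}{y(\omega)}_{\XS,\YS}.
\end{equation}
I propose to verify the hypotheses of Theorem~\ref{t:8} for $\psi$ on the compliant space $\XX$. Items \ref{a:1a}--\ref{a:1e} of Assumption~\ref{a:1} are inherited directly from Assumption~\ref{a:2}\ref{a:2a}--\ref{a:2c}, while the $\FF\otimes\BE_\XS$-measurability of the bilinear term, hence of $\psi$, stems from Lemma~\ref{l:6}\ref{l:6i}. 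The integrability witness~\ref{a:1g} is provided by any $\overline{x}\in\dom\mathfrak{I}_{\varphi,\TXX}$, available by Assumption~\ref{a:2}\ref{a:2f}, since
$\max\{\psi(\omega,\overline{x}(\omega)),0\}\le
\max\{\varphi(\omega,\overline{x}(\omega)),0\}+
|\pair{\overline{x}(\omega)}{y(\omega)}_{\XS,\YS}|$
and Assumption~\ref{a:2}\ref{a:2c} renders the right-hand side integrable. The core technical step is normality of $\psi$: for each $\omega\in\Omega$, the shearing map $T_\omega\colon(\mathsf{x},\xi)\mapsto(\mathsf{x},\xi-\pair{\mathsf{x}}{y(\omega)}_{\XS,\YS})$ is a self-homeomorphism of $\XS\times\RR$ that sends $\epi\varphi_\omega$ onto $\epi\psi_\omega$, so composing it with the approximating sequence $(z_n(\omega),\varrho_n(\omega))_{n\in\NN}$ for $\varphi$ furnished by Definition~\ref{d:n} yields an approximating sequence for $\psi$; the measurability of $\omega\mapsto\varrho_n(\omega)-\pair{z_n(\omega)}{y(\omega)}_{\XS,\YS}$ again follows from Lemma~\ref{l:6}\ref{l:6i}, and preservation of the required closure is immediate from the homeomorphism property of $T_\omega$. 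Theorem~\ref{t:8} then yields
\begin{equation}
\inf_{x\in\XX}\int_\Omega\psi(\omega,x(\omega))\mu(d\omega)
=\int_\Omega\inf_{\mathsf{x}\in\XS}\psi(\omega,\mathsf{x})\,\mu(d\omega)
=-\mathfrak{I}_{\varphi^*,\TYY}(\widetilde{y}),
\end{equation}
and splitting off the integrable linear term on the left identifies it with $-\mathfrak{I}_{\varphi,\TXX}^*(\widetilde{y})$, which gives~\ref{t:2i}.

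For~\ref{t:2ii}, combining \eqref{e:s14} with~\ref{t:2i} recasts the subgradient condition as
\begin{equation}
\int_\Omega\Big(\varphi(\omega,x(\omega))+\varphi^*(\omega,y(\omega))
-\pair{x(\omega)}{y(\omega)}_{\XS,\YS}\Big)\mu(d\omega)=0,
\end{equation}
whose integrand is pointwise nonnegative by the Fenchel--Young inequality for $\varphi_\omega$, and whose zero set is identified by \eqref{e:s14} as $\{\omega\in\Omega:\,y(\omega)\in\partial\varphi_\omega(x(\omega))\}$. Hence the integral vanishes iff the integrand vanishes $\mae$, yielding the desired equivalence. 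The main obstacle is the normality verification for $\psi$ through the shearing argument, together with careful bookkeeping of the $\pinf$ convention of Assumption~\ref{a:1} when separating integrals of $\psi$ into their $\varphi$ and bilinear components.
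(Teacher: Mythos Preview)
Your proposal is correct and follows essentially the same route as the paper: both fix $\widetilde{y}$, introduce the sheared integrand $\psi(\omega,\mathsf{x})=\varphi_\omega(\mathsf{x})-\pair{\mathsf{x}}{y(\omega)}_{\XS,\YS}$, and reduce to the interchange principle via the normality data $(z_n,\varrho_n)$ of $\varphi$ together with the $\EuScript{T}_\XS$-continuity of $\pair{\Cdot}{y(\omega)}_{\XS,\YS}$; part~\ref{t:2ii} is then handled identically in both via the Fenchel--Young equality case. The only packaging difference is that you establish full normality of $\psi$ through the shearing homeomorphism $T_\omega$ and invoke Theorem~\ref{t:8}, whereas the paper bypasses normality of $\psi$ and verifies the hypotheses of Theorem~\ref{t:1} directly by computing $\inf\psi(\omega,\XS)=\inf_{n}\psi(\omega,z_n(\omega))$ --- but the underlying computation is the same.
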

\begin{proof}
\ref{t:2i}:
In view of Assumption~\ref{a:2}\ref{a:2e} and
Proposition~\ref{p:6}, $\mathfrak{I}_{\varphi,\TXX}$
and $\mathfrak{I}_{\varphi^*,\TYY}$ are well defined.
Further, there exist sequences $(z_n)_{n\in\NN}$ in
$\mathcal{L}(\Omega;\XS)$ and $(\vartheta_n)_{n\in\NN}$ in
$\mathcal{L}(\Omega;\RR)$ such that
\begin{equation}
\label{e:imd}
(\forall\omega\in\Omega)\quad
\big\{\big(z_n(\omega),\vartheta_n(\omega)\big)\big\}_{n\in\NN}
\subset\epi\varphi_\omega
\quad\text{and}\quad
\overline{\epi\varphi_\omega}=\overline{\big\{\big(z_n(\omega),
\vartheta_n(\omega)\big)\big\}_{n\in\NN}}.
\end{equation}
Let $\widetilde{y}\in\TYY$, define
$\psi\colon\Omega\times\XS\to\RX\colon
(\omega,\mathsf{x})\mapsto\varphi_\omega(\mathsf{x})
-\pair{\mathsf{x}}{y(\omega)}_{\XS,\YS}$, and note that
$(\forall\omega\in\Omega)$ $\epi\psi_\omega\neq\emp$.
Assumption~\ref{a:2}\ref{a:2e} and
Lemma~\ref{l:6}\ref{l:6i} imply that
\begin{equation}
\label{e:tys}
\psi\,\,\text{is $\FF\otimes\BE_\XS$-measurable}.
\end{equation}
Moreover, using the continuity of the linear functionals
$(\pair{\Cdot}{\mathsf{y}}_{\XS,\YS})_{\mathsf{y}\in\YS}$,
we derive from \eqref{e:imd} that
\begin{align}
(\forall\omega\in\Omega)\quad
\inf\psi(\omega,\XS)
&=\inf_{(\mathsf{x},\xi)\in\epi\varphi_\omega}\big(
\xi-\pair{\mathsf{x}}{y(\omega)}_{\XS,\YS}\big)
\nonumber\\
&=\inf_{(\mathsf{x},\xi)\in\overline{\epi\varphi_\omega}}\big(
\xi-\pair{\mathsf{x}}{y(\omega)}_{\XS,\YS}\big)
\nonumber\\
&=\inf_{n\in\NN}\big(\vartheta_n(\omega)
-\pair{z_n(\omega)}{y(\omega)}\big)
\nonumber\\
&\geq\inf_{n\in\NN}\big(\varphi_\omega\big(z_n(\omega)\big)
-\pair{z_n(\omega)}{y(\omega)}\big)
\nonumber\\
&=\inf_{n\in\NN}\psi\big(\omega,z_n(\omega)\big)
\nonumber\\
&\geq\inf\psi(\omega,\XS).
\end{align}
Hence, $(\forall\omega\in\Omega)$
$\inf\psi(\omega,\XS)=\inf_{n\in\NN}\psi(\omega,z_n(\omega))$.
Combining this with \eqref{e:tys},
we infer that $\inf\psi(\Cdot,\XS)$ is $\FF$-measurable and that
$\psi$ fulfills property~\ref{t:1iia} in Theorem~\ref{t:1} with
$(\forall n\in\NN)$ $x_n=z_n-\overline{x}$.
In turn, thanks to Assumption~\ref{a:2}\ref{a:2b}
and the compliance of $\XX$, property~\ref{t:1iib} in 
Theorem~\ref{t:1} is fulfilled.
Thus, by invoking \eqref{e:63} and Theorem~\ref{t:1}, we obtain
\begin{align}
\mathfrak{I}_{\varphi,\TXX}^*(\widetilde{y})
&=\sup_{\widetilde{x}\in\TXX}\big(
\pair{\widetilde{x}}{\widetilde{y}}-
\mathfrak{I}_{\varphi,\TXX}(\widetilde{x})\big)
\nonumber\\
&=\sup_{x\in\XX}\bigg(
\int_\Omega\Pair{x(\omega)}{y(\omega)}_{\XS,\YS}\mu(d\omega)
-\int_\Omega\varphi\big(\omega,x(\omega)\big)\mu(d\omega)
\bigg)
\nonumber\\
&=-\inf_{x\in\XX}\int_\Omega\psi\big(\omega,x(\omega)\big)
\mu(d\omega)
\nonumber\\
&=-\int_\Omega\inf_{\mathsf{x}\in\XS}\psi(\omega,\mathsf{x})
\,\mu(d\omega)
\nonumber\\
&=\int_\Omega\varphi_\omega^*\big(y(\omega)\big)\mu(d\omega),
\end{align}
as desired.

\ref{t:2ii}:
Since the functions $(\varphi_\omega)_{\omega\in\Omega}$
are proper by Assumption~\ref{a:2}\ref{a:2e}, we derive from
\eqref{e:s14}, \ref{t:2i}, \eqref{e:63}, and the
Fenchel--Young inequality that
\begin{align}
\widetilde{y}\in
\partial\mathfrak{I}_{\varphi,\TXX}(\widetilde{x})
&\Leftrightarrow
\mathfrak{I}_{\varphi,\TXX}(\widetilde{x})+
\mathfrak{I}_{\varphi^*,\TYY}(\widetilde{y})
=\pair{\widetilde{x}}{\widetilde{y}}
\nonumber\\
&\Leftrightarrow
\int_\Omega
\varphi_\omega\big(x(\omega)\big)\mu(d\omega)
+\int_\Omega\varphi_\omega^*\big(y(\omega)\big)\mu(d\omega)
=\int_\Omega\Pair{x(\omega)}{y(\omega)}_{\XS,\YS}\mu(d\omega)
\nonumber\\
&\Leftrightarrow
\varphi_\omega\big(x(\omega)\big)
+\varphi_\omega^*\big(y(\omega)\big)
=\Pair{x(\omega)}{y(\omega)}_{\XS,\YS}\,\,\mae
\nonumber\\
&\Leftrightarrow
y(\omega)\in\partial\varphi_\omega\big(x(\omega)\big)\,\,\mae,
\end{align}
which completes the proof.
\end{proof}

A first important consequence of Theorem~\ref{t:2}\ref{t:2i} is the
following.

\begin{proposition}
\label{p:8}
Suppose that Assumption~\ref{a:2} holds,
that $(\YS,\EuScript{T}_\YS)$ is a Souslin space,
that $\dom\mathfrak{I}_{\varphi^*,\TYY}\neq\emp$,
that $\YY$ is compliant, and that
$(\forall\omega\in\Omega)$ $\varphi_\omega\in\Gamma_0(\XS)$.
Then the following are satisfied:
\begin{enumerate}
\item
\label{p:8i}
$\mathfrak{I}_{\varphi,\TXX}\in\Gamma_0(\TXX)$.
\item
\label{p:8ii}
Set $\rec\varphi\colon\Omega\times\XS\to\RX\colon
(\omega,\mathsf{x})\mapsto(\rec\varphi_\omega)(\mathsf{x})$.
Then $\rec\varphi$ is $\FF\otimes\BE_\XS$-measurable and 
$\rec\mathfrak{I}_{\varphi,\TXX}=\mathfrak{I}_{\rec\varphi,\TXX}$.
\end{enumerate}
\end{proposition}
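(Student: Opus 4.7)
The plan is to treat the two parts separately, leveraging Theorem~\ref{t:2} for (i) and a monotone convergence argument for (ii). For (i), convexity of $\mathfrak{I}_{\varphi,\TXX}$ is immediate from the pointwise convexity of $\varphi_\omega$ and linearity of integration, and its domain is nonempty by Assumption~\ref{a:2}\ref{a:2f}. To exclude the value $\minf$, I would fix any $\widetilde{y}_0\in\dom\mathfrak{I}_{\varphi^*,\TYY}$ (nonempty by hypothesis), integrate the pointwise Fenchel--Young inequality $\varphi_\omega(x(\omega))\ge\pair{x(\omega)}{y_0(\omega)}_{\XS,\YS}-\varphi_\omega^*(y_0(\omega))$, and invoke Assumption~\ref{a:2}\ref{a:2c} to derive $\mathfrak{I}_{\varphi,\TXX}(\widetilde{x})\ge\pair{\widetilde{x}}{\widetilde{y}_0}-\mathfrak{I}_{\varphi^*,\TYY}(\widetilde{y}_0)>\minf$. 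Hence $\mathfrak{I}_{\varphi,\TXX}$ is proper and convex.

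For lower semicontinuity the plan is to establish $\mathfrak{I}_{\varphi,\TXX}^{**}=\mathfrak{I}_{\varphi,\TXX}$. Theorem~\ref{t:2}\ref{t:2i} already gives $\mathfrak{I}_{\varphi,\TXX}^{*}=\mathfrak{I}_{\varphi^*,\TYY}$, and applying it a second time with the roles of $(\XS,\XX,\varphi)$ and $(\YS,\YY,\varphi^*)$ interchanged would produce $\mathfrak{I}_{\varphi^*,\TYY}^{*}=\mathfrak{I}_{\varphi^{**},\TXX}=\mathfrak{I}_{\varphi,\TXX}$, the last equality being $\varphi_\omega=\varphi_\omega^{**}$ for $\varphi_\omega\in\Gamma_0(\XS)$. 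The swapped version of Assumption~\ref{a:2} is mostly routine: $\XS$ is separable since Souslin, the constant mapping $1_A\mathsf{x}$ lies in $\XX$ because $\{\mathsf{x}\}$ is compact and $\XX$ is compliant, the bilinear pairing is symmetric, and the domain condition on $\mathfrak{I}_{\varphi^*,\TYY}$ is in the hypotheses. The remaining ingredient is normality of $\varphi^*$ as an integrand on $\YS$ in the sense of Definition~\ref{d:n}. I would derive this from the representation $\varphi^*(\omega,\mathsf{y})=\sup_{n\in\NN}(\pair{x_n(\omega)}{\mathsf{y}}_{\XS,\YS}-\varrho_n(\omega))$ recorded in the proof of Proposition~\ref{p:6}, which exhibits $\varphi^*$ as a countable pointwise supremum of Carath\'eodory integrands, each normal by Theorem~\ref{t:3}\ref{t:3iii} since the Souslin locally convex space $\YS$ is regular (as noted in the proof of Lemma~\ref{l:6}\ref{l:6i}). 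Extracting from this representation a dense measurable sequence in $\epi\varphi^*_\omega$ via a countable-selection argument on a dense subset of $\YS\times\RR$ is, in my estimation, the main technical obstacle of the proposition.

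For (ii), the sequence $(x_n,\varrho_n)$ witnessing the normality of $\varphi$ furnishes a measurable selection $z_0=x_0\in\mathcal{L}(\Omega;\XS)$ with $z_0(\omega)\in\dom\varphi_\omega$ at every $\omega$, since $\varphi_\omega(z_0(\omega))\le\varrho_0(\omega)<\pinf$. By convexity, the difference quotient $\alpha\mapsto\alpha^{-1}(\varphi(\omega,z_0(\omega)+\alpha\mathsf{x})-\varphi(\omega,z_0(\omega)))$ is nondecreasing with pointwise supremum $(\rec\varphi_\omega)(\mathsf{x})$, so $\rec\varphi$ equals the countable supremum over rational $\alpha>0$ of $\FF\otimes\BE_\XS$-measurable functions (by Assumption~\ref{a:1}\ref{a:1b} and~\ref{a:1f}), and is therefore $\FF\otimes\BE_\XS$-measurable. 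For the identity $\rec\mathfrak{I}_{\varphi,\TXX}=\mathfrak{I}_{\rec\varphi,\TXX}$, I would fix a representative $\overline{x}$ of some element of $\dom\mathfrak{I}_{\varphi,\TXX}$ with $\overline{x}(\omega)\in\dom\varphi_\omega$ at every $\omega$ (modifying on the $\mu$-null set where $\varphi_\omega(\overline{x}(\omega))=\pinf$ by $z_0$) and set $q_\alpha(\omega)=\alpha^{-1}(\varphi_\omega(\overline{x}(\omega)+\alpha x(\omega))-\varphi_\omega(\overline{x}(\omega)))$. By (i) and \eqref{e:r} applied in $\TXX$, $\rec\mathfrak{I}_{\varphi,\TXX}(\widetilde{x})=\sup_{\alpha>0}\int_\Omega q_\alpha\,d\mu$, and the Fenchel--Young bound against $\widetilde{y}_0\in\dom\mathfrak{I}_{\varphi^*,\TYY}$ furnishes an integrable lower bound for $q_\alpha$ valid uniformly in $\alpha\ge 1$. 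The monotone convergence theorem then interchanges the supremum with the integral to yield $\int_\Omega\sup_{\alpha>0}q_\alpha\,d\mu=\mathfrak{I}_{\rec\varphi,\TXX}(\widetilde{x})$, completing the argument.
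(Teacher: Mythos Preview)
Your treatment of part~\ref{p:8ii} is essentially the paper's: a measurable selection into $\dom\varphi_\omega$ supplied by normality, followed by monotone convergence on the increasing difference quotients. No issue there.

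For part~\ref{p:8i} there is a genuine gap, and you have correctly identified its location: the normality of $\varphi^*$. Your plan is to reapply Theorem~\ref{t:2}\ref{t:2i} with the roles of $(\XS,\XX,\varphi)$ and $(\YS,\YY,\varphi^*)$ swapped, but this requires $\varphi^*$ to be normal in the sense of Definition~\ref{d:n}, and none of the sufficient conditions in Theorem~\ref{t:3} are available under the standing hypotheses. The representation $\varphi^*(\omega,\mathsf{y})=\sup_n(\pair{x_n(\omega)}{\mathsf{y}}_{\XS,\YS}-\varrho_n(\omega))$ gives $\FF\otimes\BE_\YS$-measurability (that is Proposition~\ref{p:6}), but normality demands a measurable sequence dense in each $\epi\varphi^*_\omega$, and a countable supremum of normal integrands need not be normal. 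Your proposed ``countable-selection argument on a dense subset of $\YS\times\RR$'' would succeed if each $\epi\varphi^*_\omega$ were the closure of an open set it contains (the mechanism of Theorem~\ref{t:3}\ref{t:3ic}), but for a closed convex epigraph this typically requires $\intdom\varphi^*_\omega\neq\emp$, which is not assumed. Without completeness of $(\Omega,\FF,\mu)$, Theorem~\ref{t:3}\ref{t:3ia} is unavailable as a fallback.

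The paper sidesteps this entirely. Rather than invoking Theorem~\ref{t:2} a second time, it fixes $\widetilde{x}\in\TXX$, sets $\psi(\omega,\mathsf{y})=\varphi_\omega^*(\mathsf{y})-\pair{x(\omega)}{\mathsf{y}}_{\XS,\YS}$, and applies the abstract interchange principle Theorem~\ref{t:1} directly to $\psi$. Condition~\ref{t:1i} there holds because $\inf\psi(\omega,\YS)=-\varphi_\omega^{**}(x(\omega))=-\varphi_\omega(x(\omega))$ is $\FF$-measurable. For condition~\ref{t:1iia}, instead of a dense measurable sequence in the epigraph (normality), one needs only a measurable sequence $(y_n)_{n\in\NN}$ realizing the pointwise infimum $\mu$-a.e.; this is obtained by applying a measurable selection theorem (\cite[Theorem~5.7]{Himm75}) to the level-set multifunctions
\[
M_n(\omega)=\begin{cases}
\{\mathsf{y}:\psi(\omega,\mathsf{y})\le -n\},&\text{if }\inf\psi(\omega,\YS)=\minf,\\
\{\mathsf{y}:\psi(\omega,\mathsf{y})\le\inf\psi(\omega,\YS)+2^{-n}\},&\text{otherwise},
\end{cases}
\]
whose graphs lie in $\FF\otimes\BE_\YS$ by the $\FF\otimes\BE_\YS$-measurability of $\psi$. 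Condition~\ref{t:1iib} follows from compliance of $\YY$. This yields $\mathfrak{I}_{\varphi,\TXX}(\widetilde{x})=\mathfrak{I}_{\varphi,\TXX}^{**}(\widetilde{x})$ without ever asserting that $\varphi^*$ is normal. The moral: Theorem~\ref{t:1} asks for strictly less than normality, and here that slack is essential.
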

\begin{proof}
\ref{p:8i}:
Let $\widetilde{x}\in\TXX$ and set
\begin{equation}
\label{e:r3}
\psi\colon\Omega\times\YS\to\RX\colon(\omega,\mathsf{y})
\mapsto\varphi_\omega^*(\mathsf{y})
-\pair{x(\omega)}{\mathsf{y}}_{\XS,\YS}
\quad\text{and}\quad
\vartheta=\inf\psi(\Cdot,\YS).
\end{equation}
By Assumption~\ref{a:2}\ref{a:2e},
\begin{equation}
\label{e:fxs}
\varphi\big(\Cdot,x(\Cdot)\big)\,\,\text{is $\FF$-measurable},
\end{equation}
while it results from Proposition~\ref{p:6} and
Lemma~\ref{l:6}\ref{l:6i} that
\begin{equation}
\label{e:yxc}
\psi\,\,\text{is $\FF\otimes\BE_\YS$-measurable}.
\end{equation}
Moreover, for every $\omega\in\Omega$, since
$\varphi_\omega\in\Gamma_0(\XS)$, $\varphi_\omega^*$ is proper
and hence $\epi\psi_\omega\neq\emp$.
On the other hand, the Fenchel--Moreau biconjugation theorem
yields
\begin{equation}
\label{e:8cp}
(\forall\omega\in\Omega)\quad
\vartheta(\omega)
=-\varphi_\omega^{**}\big(x(\omega)\big)
=-\varphi_\omega\big(x(\omega)\big)
\end{equation}
and it thus follows from \eqref{e:fxs} that $\vartheta$ is
$\FF$-measurable. Now define
\begin{equation}
\label{e:m3}
(\forall n\in\NN)\quad
M_n\colon\Omega\to 2^\YS\colon\omega\mapsto
\begin{cases}
\menge{\mathsf{y}\in\YS}{\psi(\omega,\mathsf{y})\leq-n},
&\text{if}\,\,\vartheta(\omega)=\minf;\\
\menge{\mathsf{y}\in\YS}{\psi(\omega,\mathsf{y})
\leq\vartheta(\omega)+2^{-n}},
&\text{if}\,\,\vartheta(\omega)\in\RR.
\end{cases}
\end{equation}
Fix temporarily $n\in\NN$. By \eqref{e:yxc},
$\menge{(\omega,\mathsf{y})}{\mathsf{y}\in
M_n(\omega)}\in\FF\otimes\BE_\YS$. Hence, since
$(\YS,\EuScript{T}_\YS)$ is a Souslin space,
\cite[Theorem~5.7]{Himm75} guarantees that there exist
$y_n\in\mathcal{L}(\Omega;\YS)$
and $B_n\in\FF$ such that $\mu(B_n)=0$ and
$(\forall\omega\in\complement B_n)$ $y_n(\omega)\in M_n(\omega)$.
Now set $B=\bigcup_{n\in\NN}B_n$. Then $\mu(B)=0$ and, by virtue
of \eqref{e:r3} and \eqref{e:m3},
\begin{equation}
\big(\forall\omega\in\complement B\big)(\forall n\in\NN)\quad
\vartheta(\omega)
\leq\inf_{k\in\NN}\psi\big(\omega,y_k(\omega)\big)
\leq\psi\big(\omega,y_n(\omega)\big)
\leq
\begin{cases}
-n,&\text{if}\,\,\vartheta(\omega)=\minf;\\
\vartheta(\omega)+2^{-n},&\text{if}\,\,\vartheta(\omega)\in\RR.
\end{cases}
\end{equation}
Thus, letting $n\uparrow\pinf$ yields
$(\forall\omega\in\complement B)$
$\vartheta(\omega)=\inf_{n\in\NN}\psi(\omega,y_n(\omega))$.
Consequently, since $\YY$ is compliant, property~\ref{t:1ii} in
Theorem~\ref{t:1} is satisfied. In turn, we deduce from
\eqref{e:8cp}, Theorem~\ref{t:1}, \eqref{e:63}, and
Theorem~\ref{t:2}\ref{t:2i} that
\begin{align}
\mathfrak{I}_{\varphi,\TXX}(\widetilde{x})
&=\int_\Omega\varphi\big(\omega,x(\omega)\big)\mu(d\omega)
\nonumber\\
&=-\int_\Omega\inf_{\mathsf{y}\in\YS}\psi(\omega,\mathsf{y})\,
\mu(d\omega)
\nonumber\\
&=-\inf_{y\in\YY}\int_\Omega\psi\big(\omega,y(\omega)\big)
\mu(d\omega)
\nonumber\\
&=\sup_{y\in\YY}\bigg(
\int_\Omega\Pair{x(\omega)}{y(\omega)}_{\XS,\YS}\mu(d\omega)
-\int_\Omega\varphi_\omega^*\big(y(\omega)\big)\mu(d\omega)\bigg)
\nonumber\\
&=\sup_{\widetilde{y}\in\widetilde{\YY}}
\big(\pair{\widetilde{x}}{\widetilde{y}}-
\mathfrak{I}_{\varphi,\TXX}^*(\widetilde{y})\big)
\nonumber\\
&=\mathfrak{I}_{\varphi,\TXX}^{**}(\widetilde{x}).
\end{align}
Thus $\mathfrak{I}_{\varphi,\TXX}=
\mathfrak{I}_{\varphi,\TXX}^{**}$ and,
since $\mathfrak{I}_{\varphi,\TXX}$ is proper, we conclude
that $\mathfrak{I}_{\varphi,\TXX}\in\Gamma_0(\TXX)$.

\ref{p:8ii}:
The normality of $\varphi$ implies that it is
$\FF\otimes\BE_\XS$-measurable and that
there exists $u\in\mathcal{L}(\Omega;\XS)$ such that
$(\forall\omega\in\Omega)$ $u(\omega)\in\dom\varphi_\omega$.
Hence, for every $n\in\NN$, the function
$(\Omega\times\XS,\FF\otimes\BE_\XS)\to\RX\colon
(\omega,\mathsf{x})\mapsto
\varphi_\omega(u(\omega)+n\mathsf{x})-\varphi_\omega(u(\omega))$
is measurable. Since, by \eqref{e:r},
\begin{equation}
(\forall\omega\in\Omega)(\forall\mathsf{x}\in\XS)\quad
(\rec\varphi)(\omega,\mathsf{x})
=(\rec\varphi_\omega)(\mathsf{x})
=\lim_{\NN\ni n\uparrow\pinf}
\dfrac{\varphi_\omega\big(u(\omega)+n\mathsf{x}\big)
-\varphi_\omega\big(u(\omega)\big)}{n},
\end{equation}
it follows that $\rec\varphi$ is $\FF\otimes\BE_\XS$-measurable.
Now let $\widetilde{x}\in\TXX$ and
$\widetilde{z}\in\dom\mathfrak{I}_{\varphi,\TXX}$. Then, for
$\mu$-almost every $\omega\in\Omega$,
$z(\omega)\in\dom\varphi_\omega$ and it thus follows from the
convexity of $\varphi_\omega$ that the function
$\theta\colon\RPP\to\RX\colon\alpha\mapsto
(\varphi_\omega(z(\omega)+\alpha x(\omega))
-\varphi_\omega(z(\omega)))/\alpha$ is increasing.
Thus, appealing to \eqref{e:r} and the monotone convergence
theorem, we deduce from \ref{p:8i} that
\begin{align}
\big(\rec\mathfrak{I}_{\varphi,\TXX}\big)(\widetilde{x})
&=\lim_{\alpha\uparrow\pinf}
\frac{\mathfrak{I}_{\varphi,\TXX}(\widetilde{z}+
\alpha\widetilde{x})-\mathfrak{I}_{\varphi,\TXX}(\widetilde{z})}{
\alpha}
\nonumber\\
&=\lim_{\alpha\uparrow\pinf}\int_\Omega\frac{
\varphi_\omega\big(z(\omega)+\alpha x(\omega)\big)
-\varphi_\omega\big(z(\omega)\big)}{\alpha}\mu(d\omega)
\nonumber\\
&=\int_\Omega\lim_{\alpha\uparrow\pinf}\frac{
\varphi_\omega\big(z(\omega)+\alpha x(\omega)\big)
-\varphi_\omega\big(z(\omega)\big)}{\alpha}\,\mu(d\omega)
\nonumber\\
&=\int_\Omega(\rec\varphi_\omega)\big(x(\omega)\big)\mu(d\omega),
\end{align}
as claimed.
\end{proof}

Two key ingredients in Hilbertian convex analysis are the Moreau
envelope of \eqref{e:7} and the proximity operator of \eqref{e:8}
\cite{Livre1,More65}. To compute them for integral functions, we
first observe that, in the case of Hilbert spaces identified with
their duals, Assumption~\ref{a:2} can be simplified as follows.

\begin{assumption}\
\label{a:3}
\begin{enumerate}[label={\rm[\Alph*]}]
\item
\label{a:3a}
$\XS$ is a separable real Hilbert space with scalar product
$\scal{\Cdot}{\Cdot}_\XS$, associated norm $\|\Cdot\|_\XS$,
and strong topology $\EuScript{T}_\XS$.
\item
\label{a:3b}
$(\Omega,\FF,\mu)$ is a $\sigma$-finite measure space such that
$\mu(\Omega)\neq 0$.
\item
\label{a:3c}
$\XX=\menge{x\in\mathcal{L}(\Omega;\XS)}{
\int_\Omega\|x(\omega)\|_\XS^2\,\mu(d\omega)<\pinf}$ and
$\TXX$ is the usual real Hilbert space
$L^2(\Omega;\XS)$ with scalar product
\begin{equation}
(\forall\widetilde{x}\in\TXX)(\forall\widetilde{y}\in\TXX)\quad
\scal{\widetilde{x}}{\widetilde{y}}=
\int_\Omega\scal{x(\omega)}{y(\omega)}_\XS\,\mu(d\omega).
\end{equation}
\item
\label{a:3e}
$\varphi\colon(\Omega\times\XS,\FF\otimes\BE_\XS)\to\RX$ is a
normal integrand such that $(\forall\omega\in\Omega)$
$\varphi_\omega\in\Gamma_0(\XS)$.
\item
\label{a:3f}
$\dom\mathfrak{I}_{\varphi,\TXX}\neq\emp$ and
$\dom\mathfrak{I}_{\varphi^*,\TXX}\neq\emp$.
\end{enumerate}
\end{assumption}

\begin{proposition}
\label{p:11}
Suppose that Assumption~\ref{a:3} holds and
let $\gamma\in\RPP$. Then the following are satisfied:
\begin{enumerate}
\item
\label{p:11i}
Let $\widetilde{x}\in\TXX$ and $\widetilde{p}\in\TXX$. Then
$\widetilde{p}=\prox_{\gamma
\mathfrak{I}_{\varphi,\TXX}}
\widetilde{x}$ $\Leftrightarrow$
$p(\omega)=\prox_{\gamma\varphi_\omega}(x(\omega))$
for $\mu$-almost every $\omega\in\Omega$.
\item
\label{p:11ii}
Set $\moyo{\varphi}{\gamma}\colon\Omega\times\XS\to\RX\colon
(\omega,\mathsf{x})\mapsto\moyo{(\varphi_\omega)}{\gamma}
(\mathsf{x})$.
Then $\moyo{\varphi}{\gamma}$ is normal and
$\moyo{\mathfrak{I}_{\varphi,\TXX}}{\gamma}
=\mathfrak{I}_{\moyo{\varphi}{\gamma},\TXX}$.
\end{enumerate}
\end{proposition}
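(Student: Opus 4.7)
The plan is to deduce both parts from the conjugate/subdifferential interchange (Theorem~\ref{t:2}) together with the convex-analytic scaffolding of Proposition~\ref{p:8}, after recasting the Hilbertian setup of Assumption~\ref{a:3} as an instance of the general duality framework of Assumption~\ref{a:2}. Concretely, I would take $\YS=\XS$, $\YY=\XX$, and $\pair{\Cdot}{\Cdot}_{\XS,\YS}=\scal{\Cdot}{\Cdot}_\XS$. A separable Hilbert space is Polish, hence a Souslin locally convex topological vector space; the scalar product provides self-dual separating duality; Cauchy--Schwarz gives the pointwise pairing integrability of Assumption~\ref{a:2}\ref{a:2c}; and compliance of $\XX=L^2(\Omega;\XS)$ follows because, if $A\in\FF$ has finite measure and $\overline{z(A)}$ is compact, then $z(A)$ is norm-bounded, so $\int_\Omega\|1_Az\|_\XS^2 d\mu\leq\mu(A)\sup_{\omega\in A}\|z(\omega)\|_\XS^2<\pinf$; the same bound shows $1_A\mathsf{y}\in\XX$ for every $\mathsf{y}\in\XS$ and every $A$ of finite measure. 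By Assumption~\ref{a:3f} and Proposition~\ref{p:8}\ref{p:8i} (with $\YY=\XX$, which is compliant), $\mathfrak{I}_{\varphi,\TXX}\in\Gamma_0(\TXX)$, so its proximity operator is well defined.

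For \ref{p:11i}, I would use the monotone-inclusion description \eqref{e:8}: $\widetilde{p}=\prox_{\gamma\mathfrak{I}_{\varphi,\TXX}}\widetilde{x}$ is equivalent to $\gamma^{-1}(\widetilde{x}-\widetilde{p})\in\partial\mathfrak{I}_{\varphi,\TXX}(\widetilde{p})$. Theorem~\ref{t:2}\ref{t:2ii} then translates this into $\gamma^{-1}(x(\omega)-p(\omega))\in\partial\varphi_\omega(p(\omega))$ $\mae$, which by \eqref{e:8} applied pointwise to $\varphi_\omega\in\Gamma_0(\XS)$ is exactly $p(\omega)=\prox_{\gamma\varphi_\omega}(x(\omega))$ $\mae$

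For \ref{p:11ii}, I would first establish normality of $\moyo{\varphi}{\gamma}$ through Theorem~\ref{t:3}\ref{t:3iii}: the separable Hilbert space $\XS$ is a regular Souslin space, and for each $\omega$ the function $\moyo{(\varphi_\omega)}{\gamma}$ is real-valued and Lipschitz continuous on $\XS$ because $\varphi_\omega\in\Gamma_0(\XS)$. To verify $\FF$-measurability of $\moyo{\varphi}{\gamma}(\Cdot,\mathsf{x})$ for each $\mathsf{x}\in\XS$, I would invoke Definition~\ref{d:n} to choose sequences $(z_n)_{n\in\NN}$ in $\mathcal{L}(\Omega;\XS)$ and $(\vartheta_n)_{n\in\NN}$ in $\mathcal{L}(\Omega;\RR)$ with $\{(z_n(\omega),\vartheta_n(\omega))\}_{n\in\NN}\subset\epi\varphi_\omega$ and dense in $\epi\varphi_\omega$, and then observe that, since $(\mathsf{y},\eta)\mapsto\eta+(2\gamma)^{-1}\|\mathsf{x}-\mathsf{y}\|_\XS^2$ is continuous on $\XS\times\RR$, taking the infimum of this function over $\epi\varphi_\omega$ coincides with taking it over the dense subset, yielding
\begin{equation*}
\moyo{(\varphi_\omega)}{\gamma}(\mathsf{x})
=\inf_{n\in\NN}\bigg(\vartheta_n(\omega)
+\frac{1}{2\gamma}\|\mathsf{x}-z_n(\omega)\|_\XS^2\bigg),
\end{equation*}
which is $\FF$-measurable in $\omega$. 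Theorem~\ref{t:3}\ref{t:3iii} then certifies normality. For the integral identity, I would fix $\widetilde{x}\in\TXX$, set $\widetilde{p}=\prox_{\gamma\mathfrak{I}_{\varphi,\TXX}}\widetilde{x}$, and combine \eqref{e:7b} with \ref{p:11i} to write
\begin{equation*}
\moyo{\mathfrak{I}_{\varphi,\TXX}}{\gamma}(\widetilde{x})
=\int_\Omega\bigg(\varphi_\omega\big(p(\omega)\big)
+\frac{1}{2\gamma}\|x(\omega)-p(\omega)\|_\XS^2\bigg)\mu(d\omega)
=\int_\Omega\moyo{(\varphi_\omega)}{\gamma}\big(x(\omega)\big)
\mu(d\omega),
\end{equation*}
the last equality holding because, by \ref{p:11i}, $p(\omega)$ achieves the Moreau infimum at $x(\omega)$ $\mae$

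The main obstacle I anticipate is the normality of $\moyo{\varphi}{\gamma}$: one must be careful to justify that passing from $\epi\varphi_\omega$ to its closure, and then to the dense countable subset, is legitimate for the continuous Moreau test function, so that the $\FF\otimes\BE_\XS$-measurability of $\moyo{\varphi}{\gamma}$ can be reached via Theorem~\ref{t:3}\ref{t:3iii}. Everything else is a direct assembly of Theorem~\ref{t:2}\ref{t:2ii}, Proposition~\ref{p:8}\ref{p:8i}, and the definitions \eqref{e:7}--\eqref{e:8}.
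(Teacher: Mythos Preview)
Your proposal is correct and follows the paper's overall architecture: reduce to Assumption~\ref{a:2}, invoke Proposition~\ref{p:8}\ref{p:8i} to place $\mathfrak{I}_{\varphi,\TXX}$ in $\Gamma_0(\TXX)$, prove \ref{p:11i} via \eqref{e:8} and Theorem~\ref{t:2}\ref{t:2ii}, and then read off the envelope identity in \ref{p:11ii} from \eqref{e:7b} and \ref{p:11i}. One cosmetic slip: the Moreau envelope $\moyo{(\varphi_\omega)}{\gamma}$ is finite, convex, and Fr\'echet differentiable, hence continuous, but not globally Lipschitz in general (take $\varphi_\omega=\iota_{\{0\}}$); only continuity is needed for Theorem~\ref{t:3}\ref{t:3iii}, so the argument stands.

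The one substantive difference is the normality argument for $\moyo{\varphi}{\gamma}$. The paper works at the level of epigraphs: it sets $\boldsymbol{\mathsf{V}}=\{(\mathsf{x},\xi):\|\mathsf{x}\|_\XS^2/(2\gamma)<\xi\}$, shows that the strict epigraph of $\moyo{(\varphi_\omega)}{\gamma}$ equals $\epi\varphi_\omega+\boldsymbol{\mathsf{V}}=\bigcup_n(\boldsymbol{x}_n(\omega)+\boldsymbol{\mathsf{V}})$ via the identity $\overline{C}+\boldsymbol{\mathsf{V}}=C+\boldsymbol{\mathsf{V}}$ for open $\boldsymbol{\mathsf{V}}$, deduces $\FF$-measurability of $\omega\mapsto\moyo{(\varphi_\omega)}{\gamma}(\mathsf{x})$, and then appeals to Theorem~\ref{t:3}\ref{t:3iib}. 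Your route is more direct: you rewrite $\moyo{(\varphi_\omega)}{\gamma}(\mathsf{x})$ as the infimum of the continuous function $(\mathsf{y},\eta)\mapsto\eta+(2\gamma)^{-1}\|\mathsf{x}-\mathsf{y}\|_\XS^2$ over the closed set $\epi\varphi_\omega$, pass to the countable dense subset furnished by normality, and invoke Theorem~\ref{t:3}\ref{t:3iii}. Both reach the same measurability conclusion from the same raw ingredient (the Castaing representation of $\epi\varphi_\omega$); yours avoids the epigraph arithmetic at the cost of needing continuity of $\moyo{(\varphi_\omega)}{\gamma}$, which is available here but would not be in a non-Hilbertian inf-convolution setting, where the paper's epigraph approach is more robust.
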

\begin{proof}
Since Assumption~\ref{a:3} is an instance of Assumption~\ref{a:2},
we first infer from Proposition~\ref{p:8}\ref{p:8i} that 
$\mathfrak{I}_{\varphi,\TXX}\in\Gamma_0(\TXX)$.

\ref{p:11i}: We derive from \eqref{e:8} and
Theorem~\ref{t:2}\ref{t:2ii} that
\begin{align}
\widetilde{p}=\prox_{\gamma\mathfrak{I}_{\varphi,
\TXX}}\widetilde{x}
&\Leftrightarrow\widetilde{x}-\widetilde{p}\in\gamma\partial
\mathfrak{I}_{\varphi,\TXX}(\widetilde{p})
\nonumber\\
&\Leftrightarrow x(\omega)-p(\omega)\in\gamma\partial
\varphi_\omega\big(p(\omega)\big)\,\,
\text{for $\mu$-almost every}\,\,\omega\in\Omega
\nonumber\\
&\Leftrightarrow p(\omega)=\prox_{\gamma\varphi_\omega}
x(\omega)\,\,\text{for $\mu$-almost every}
\,\,\omega\in\Omega.
\end{align}

\ref{p:11ii}:
Since $\BE_{\XS\times\RR}=\BE_\XS\otimes\BE_\RR$,
it results from Assumption~\ref{a:3}\ref{a:3e} and
Definition~\ref{d:n} that there exists a sequence
$(\boldsymbol{x}_n)_{n\in\NN}$
in $\mathcal{L}(\Omega;\XS\times\RR)$ such that
\begin{equation}
\label{e:usn}
(\forall\omega\in\Omega)\quad
\epi\varphi_\omega=
\overline{\big\{\boldsymbol{x}_n(\omega)\big\}_{n\in\NN}}.
\end{equation}
Set $\boldsymbol{\mathsf{V}}=
\menge{(\mathsf{x},\xi)\in\XS\times\RR}{
\|\mathsf{x}\|_\XS^2/(2\gamma)<\xi}$.
Then $\boldsymbol{\mathsf{V}}$ is open and therefore,
for every $\boldsymbol{\mathsf{C}}\subset\XS\times\RR$,
$\boldsymbol{\mathsf{C}}+\boldsymbol{\mathsf{V}}
=\overline{\boldsymbol{\mathsf{C}}}+\boldsymbol{\mathsf{V}}$.
Thus, we derive from
\eqref{e:7} and \eqref{e:usn} that
\begin{align}
(\forall\omega\in\Omega)\quad
\menge{(\mathsf{x},\xi)\in\XS\times\RR}{
\moyo{(\varphi_\omega)}{\gamma}(\mathsf{x})<\xi}
&=\menge{(\mathsf{x},\xi)\in\XS\times\RR}{
\varphi_\omega(\mathsf{x})<\xi}+\boldsymbol{\mathsf{V}}
\nonumber\\
&=\overline{\menge{(\mathsf{x},\xi)\in\XS\times\RR}{
\varphi_\omega(\mathsf{x})<\xi}}+\boldsymbol{\mathsf{V}}
\nonumber\\
&=\epi\varphi_\omega+\boldsymbol{\mathsf{V}}
\nonumber\\
&=\overline{\big\{\boldsymbol{x}_n(\omega)\big\}_{n\in\NN}}
+\boldsymbol{\mathsf{V}}
\nonumber\\
&=\big\{\boldsymbol{x}_n(\omega)\big\}_{n\in\NN}
+\boldsymbol{\mathsf{V}}
\nonumber\\
&=\bigcup_{n\in\NN}\big(\boldsymbol{x}_n(\omega)
+\boldsymbol{\mathsf{V}}\big).
\end{align}
Hence, for every $\mathsf{x}\in\XS$ and every $\xi\in\RR$,
since
$(\mathsf{x},\xi)-\boldsymbol{\mathsf{V}}\in\BE_{\XS\times\RR}$
and $\{\boldsymbol{x}_n\}_{n\in\NN}\subset
\mathcal{L}(\Omega;\XS\times\RR)$, we obtain
\begin{equation}
\menge{\omega\in\Omega}{
\moyo{(\varphi_\omega)}{\gamma}(\mathsf{x})<\xi}
=\Menge{\omega\in\Omega}{
(\mathsf{x},\xi)\in\bigcup_{n\in\NN}\big(\boldsymbol{x}_n(\omega)
+\boldsymbol{\mathsf{V}}\big)}
=\bigcup_{n\in\NN}\boldsymbol{x}_n^{-1}\big(
(\mathsf{x},\xi)-\boldsymbol{\mathsf{V}}\big)
\in\FF,
\end{equation}
which shows that $(\moyo{\varphi}{\gamma})(\Cdot,\mathsf{x})$ is
$\FF$-measurable.
Hence, since $(\XS,\EuScript{T}_\XS)$ is a Fr\'echet space,
Theorem~\ref{t:3}\ref{t:3iib} ensures that
$\moyo{\varphi}{\gamma}$ is normal. It remains to show that
$\moyo{\mathfrak{I}_{\varphi,\TXX}}{\gamma}
=\mathfrak{I}_{\moyo{\varphi}{\gamma},\TXX}$.
Let $\widetilde{x}\in\TXX$ and set
$\widetilde{p}=\prox_{\gamma
\mathfrak{I}_{\varphi,\TXX}}
\widetilde{x}$.
Then, by \ref{p:11i}, for 
$\mu$-almost every $\omega\in\Omega$,
$p(\omega)=\prox_{\gamma\varphi_\omega}(x(\omega))$
and, therefore, \eqref{e:7b} yields
$\moyo{(\varphi_\omega)}{\gamma}(x(\omega))
=\varphi_\omega(p(\omega))
+\|x(\omega)-p(\omega)\|_\XS^2/(2\gamma)$. Hence
\begin{align}
\moyo{\mathfrak{I}_{\varphi,\TXX}}{\gamma}(\widetilde{x})
&=\mathfrak{I}_{\varphi,\TXX}(\widetilde{p})
+\frac{1}{2\gamma}\|\widetilde{x}-\widetilde{p}\|_{\TXX}^2
\nonumber\\
&=\int_\Omega\varphi_\omega\big(p(\omega)\big)\mu(d\omega)
+\frac{1}{2\gamma}\int_\Omega
\|x(\omega)-p(\omega)\|_\XS^2\mu(d\omega)
\nonumber\\
&=\int_\Omega
\moyo{(\varphi_\omega)}{\gamma}\big(x(\omega)\big)\mu(d\omega)
\nonumber\\
&=\mathfrak{I}_{\moyo{\varphi}{\gamma},\TXX}(\widetilde{x}),
\end{align}
which concludes the proof.
\end{proof}

\begin{remark}\
\label{r:5}
Theorem~\ref{t:2}, Proposition~\ref{p:8}, and
Proposition~\ref{p:11} extend the state of the art on several
fronts, in particular by removing completeness of
$(\Omega,\FF,\mu)$ when $\XS$ is infinite-dimensional.
\begin{enumerate}
\item
The conclusion of Theorem~\ref{t:2}\ref{t:2i} first appeared in
\cite[Theorem~2]{Roc68a} in the special case when $\XS$ is the
standard Euclidean space $\RR^N$ and $\XX$ is
Rockafellar-decomposable (see Proposition~\ref{p:10}\ref{p:10iv}
for definition).
\item
In view of Proposition~\ref{p:10}\ref{p:10iv} and
Theorem~\ref{t:3}\ref{t:3ia}, Theorem~\ref{t:2} subsumes
\cite[Theorem~2 and Equation~(25)]{Rock71} (see also
\cite[Theorem~21]{Rock74}), where $\XS$ is a separable Banach
space, $\XX$ is Rockafellar-decomposable,
and $(\Omega,\FF,\mu)$ is complete.
\item
The conclusion of Theorem~\ref{t:2}\ref{t:2i} appears in
\cite{Vala75} in the special case when $\XX$ is
Valadier-decomposable (see Proposition~\ref{p:10}\ref{p:10v} for
definition) and $(\Omega,\FF,\mu)$ is complete.
\item
Proposition~\ref{p:8}\ref{p:8i} subsumes
\cite[Corollary p.~227]{Rock71}, where $\XS$ is a separable Banach
space, $\XX$ is Rockafellar-decomposable, and $(\Omega,\FF,\mu)$ is
complete.
\item
The conclusion of Proposition~\ref{p:8}\ref{p:8ii} first appeared
in \cite[Proposition~1]{Bism73} in the context where
$\XS$ is a separable reflexive Banach space,
$\XX$ is Rockafellar-decomposable,
and $(\Omega,\FF,\mu)$ is a complete probability space.
Another special case is \cite[Theorem~2]{Penn18},
where $\XX$ is Valadier-decomposable and either $\XS=\RR^N$ or
$(\Omega,\FF,\mu)$ is complete.
\item
Proposition~\ref{p:11}\ref{p:11i} appears in 
\cite[Proposition~24.13]{Livre1} in the special case when 
$(\Omega,\FF,\mu)$ is complete, for every $\omega\in\Omega$
$\varphi_\omega=\mathsf{f}$, and either $\mu(\Omega)<\pinf$ or
$\mathsf{f}\geq\mathsf{f}(\mathsf{0})\geq 0$.
\end{enumerate}
\end{remark}

\end{document}